%% file: main_v2.1.tex
\documentclass[11pt,reqno]{amsart}
\usepackage[T1]{fontenc}
\usepackage{lmodern,amssymb,mathtools,booktabs,array,longtable}
\usepackage[a4paper,margin=28mm]{geometry}
\usepackage{microtype,aliascnt,enumitem}
\usepackage{tikz}
\usetikzlibrary{arrows.meta,decorations.markings}
\usepackage[colorlinks=true,linkcolor=blue,citecolor=blue,urlcolor=blue]{hyperref}
\usepackage[nameinlink,noabbrev]{cleveref}
\numberwithin{equation}{section}
\newtheorem{theorem}{Theorem}[section]
\newaliascnt{lemma}{theorem}\newtheorem{lemma}[lemma]{Lemma}\aliascntresetthe{lemma}
\newaliascnt{proposition}{theorem}\newtheorem{proposition}[proposition]{Proposition}\aliascntresetthe{proposition}
\newaliascnt{corollary}{theorem}\newtheorem{corollary}[corollary]{Corollary}\aliascntresetthe{corollary}
\newtheorem{conjecture}{Conjecture}[section]
\theoremstyle{definition}
\newaliascnt{definition}{theorem}\aliascntresetthe{definition}
\newaliascnt{example}{theorem}\aliascntresetthe{example}
\theoremstyle{remark}
\newaliascnt{remark}{theorem}\aliascntresetthe{remark}
\newtheorem*{remark*}{Remark}
\crefname{theorem}{Theorem}{Theorems}\Crefname{theorem}{Theorem}{Theorems}
\crefname{lemma}{Lemma}{Lemmas}\Crefname{lemma}{Lemma}{Lemmas}
\crefname{proposition}{Proposition}{Propositions}\Crefname{proposition}{Proposition}{Propositions}
\crefname{corollary}{Corollary}{Corollaries}\Crefname{corollary}{Corollary}{Corollaries}
\crefname{conjecture}{Conjecture}{Conjectures}\Crefname{conjecture}{Conjecture}{Conjectures}
\crefname{definition}{Definition}{Definitions}\Crefname{definition}{Definition}{Definitions}
\crefname{example}{Example}{Examples}\Crefname{example}{Example}{Examples}
\crefname{remark}{Remark}{Remarks}\Crefname{remark}{Remark}{Remarks}
\crefname{section}{Section}{Sections}\Crefname{section}{Section}{Sections}
\crefname{appendix}{Appendix}{Appendices}\Crefname{appendix}{Appendix}{Appendices}
\DeclareMathOperator{\diag}{diag}
\DeclareMathOperator{\tr}{tr}
\DeclareMathOperator{\rank}{rank}
\DeclareMathOperator{\Pf}{Pf}
\DeclareMathOperator{\adj}{adj}

\newcommand{\one}{\mathbf 1}

\newcommand{\ZZ}{\mathbb Z}

\newcommand{\FF}{\mathbb F}
\newcommand{\T}{\mathsf T}

\hypersetup{pdftitle={Extremal arborescences in graph orientations},pdfauthor={Jianing Liu and Zhijun Xu},pdfsubject={Version 2.0: Conjectures 5.1 and 5.4 and sharp bounds for complete bipartite orientations}}
\title[Extremal arborescences in graph orientations]{Extremal arborescences in graph orientations\textsuperscript{*}}
\author[Jianing Liu]{Jianing Liu\textsuperscript{\ensuremath{\dagger}}}
\author[Zhijun Xu]{Zhijun Xu\textsuperscript{\ensuremath{\ddagger}}}
\date{}
\keywords{arborescence, Eulerian orientation, plane graph, tournament, complete bipartite graph, incidence design}
\newcommand{\paperfunding}{\textbf{Funding:} Both authors were supported by the Fundamental and Interdisciplinary Disciplines Breakthrough Plan of the Ministry of Education of China (JYB2025XDXM207).}
\newcommand{\paperaffiliation}{School of Mathematical Sciences and LPMC, Nankai University, Tianjin 300071, China.}
\makeatletter
\def\@adminfootnotes{%
  \begingroup
  \renewcommand{\thefootnote}{\fnsymbol{footnote}}%
  \begin{NoHyper}
  \footnotetext[1]{\paperfunding}
  \footnotetext[2]{\paperaffiliation\ Email: \mbox{\href{mailto:liujianing@mail.nankai.edu.cn}{\nolinkurl{liujianing@mail.nankai.edu.cn}}}.}
  \footnotetext[3]{\paperaffiliation\ Email: \mbox{\href{mailto:zhj_xu@mail.nankai.edu.cn}{\nolinkurl{zhj_xu@mail.nankai.edu.cn}}}.}
  \end{NoHyper}
  \endgroup
}
\def\@setauthors{%
  \begingroup
  \trivlist
  \centering\small\@topsep30\p@\relax
  \advance\@topsep by -\baselineskip
  \item\relax
  \author@andify\authors
  \authors
  \endtrivlist
  \endgroup
}
\let\paper@setabstracta\@setabstracta
\def\@setabstracta{%
  \paper@setabstracta
  \begingroup
  \Small
  \leftskip3pc \rightskip3pc
  \par\medskip
  \noindent\textbf{Key words.}\enspace\@keywords.\par
  \endgroup
}
\makeatother
\begin{document}
\begin{abstract}
In this paper, we study the extremal number of arborescences over orientations of a fixed graph. We prove that every alternating orientation of a plane Eulerian multigraph minimizes the rooted arborescence count among its Eulerian orientations. We also prove that every tournament of odd order maximizing the total number of arborescences is regular, and hence Eulerian. These results resolve Conjectures~5.1 and~5.4 of Bandekar et al.~\cite{bandekar2025}. We further establish upper bounds for arborescence counts in orientations of complete bipartite graphs and characterize the orientations attaining them.
\end{abstract}
\maketitle
\enlargethispage{2pt}

\section{Introduction}\label{sec:introduction}
Spanning-tree enumeration connects the combinatorial structure of a graph with the algebra of its Laplacian. Its directed counterpart counts arborescences: spanning directed trees oriented towards a root. The matrix-tree theorem expresses their number as a Laplacian cofactor; see Tutte~\cite{tutte1948} and Chaiken~\cite{chaiken1982}. McKay~\cite{mckay1983} obtained spanning-tree bounds for regular graphs in terms of short cycles. For a fixed underlying graph, we ask which orientations minimize or maximize the number of arborescences.

Let $a_r(D)$ count the arborescences of $D$ rooted at $r$, and let $\tau(D)=\sum_{r\in V(D)}a_r(D)$. The BEST theorem~\cite{best1951} relates these counts in Eulerian digraphs to directed Euler tours through degree-dependent factors. For an Eulerian digraph, the rooted counts have a common value $a(D)$ and $\tau(D)=|V(D)|a(D)$; see \cref{lem:root-independence}. We optimize $a(D)$ over Eulerian orientations and $\tau(D)$ over arbitrary orientations.

Bandekar et al.~\cite{bandekar2025} established lower bounds and equality characterizations for Eulerian orientations of complete graphs, complete bipartite graphs and double graphs, and upper bounds attained by doubly regular tournaments. Their questions include the following two conjectures, which are the main focus of this paper.

\begin{samepage}
\begin{conjecture}[{\cite[Conjecture~5.1 and Remark~5.2]{bandekar2025}}]\label{conj:bandekar-planar}
An alternating orientation of a plane Eulerian graph minimizes the rooted arborescence count among its Eulerian orientations.
\end{conjecture}

\begin{conjecture}[{\cite[Conjecture~5.4]{bandekar2025}}]\label{conj:bandekar-regular}
Every tournament maximizing $\tau$ among all tournaments of the same odd order is regular.
\end{conjecture}
\end{samepage}

\subsection{Alternating orientations of plane graphs}
A plane graph has a fixed embedding. An orientation is alternating if incoming and outgoing arcs alternate in the cyclic order around every vertex. In this order, a loop is recorded twice, once as an incoming arc at its head and once as an outgoing arc at its tail. Such an orientation is Eulerian, since every vertex has equal indegree and outdegree. It always exists when the plane graph is Eulerian. Indeed, every cut of an even-degree graph has even size: summing the degrees on one side counts the internal edges twice and the cut edges once. In particular, the graph has no bridge, so its planar dual has no loop. Each simple cycle of the dual is a Jordan curve, and the original edges it crosses form the cut between the vertices inside and outside that curve. The dual therefore has no odd cycle and is bipartite. The faces of the original graph admit a black-and-white coloring, with different colors on the two sides of every edge. Orient each edge with its black face on the left. At a vertex, consecutive face sectors have opposite colors, so the incident arcs alternate between incoming and outgoing in the cyclic order. At an isolated vertex the condition is vacuous. \Cref{fig:alternating} illustrates this construction.

The following theorem proves \cref{conj:bandekar-planar}, allowing loops, parallel edges and cut vertices.

\begin{theorem}\label{thm:planar-main}
Let $G$ be a finite plane Eulerian multigraph, and let $D_0$ be an alternating orientation of $G$. For every Eulerian orientation $D$ of $G$ and every vertex $r$,
\[
 a_r(D_0)\le a_r(D).
\]
\end{theorem}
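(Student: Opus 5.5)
The plan is to move from arborescences to Euler tours and then argue by induction on the number of edges, using splittings at a vertex. By the BEST theorem, for every Eulerian orientation $D$ of $G$ we have
\[
 \operatorname{ec}(D)=a_r(D)\prod_{v}\bigl(\tfrac{d_G(v)}{2}-1\bigr)!,
\]
where $\operatorname{ec}(D)$ is the number of directed Euler tours of $D$. The product depends only on $G$, not on the orientation. By \cref{lem:root-independence}, $a_r(D)$ is also independent of $r$. So the theorem is equivalent to the orientation-free statement $\operatorname{ec}(D_0)\le \operatorname{ec}(D)$ for every Eulerian orientation $D$. Loops, parallel edges and cut vertices are harmless here. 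A loop contributes one in-arc and one out-arc, and one checks it multiplies both sides by the same factor. At a cut vertex, tours factor over the blocks, and an alternating orientation restricts to an alternating orientation of each block, since the blocks occupy disjoint angular intervals at the cut vertex. So I may assume $G$ is $2$-connected and loopless, and treat small cases directly.

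For the induction I would use the splitting recursion. Fix a vertex $v$ of degree $2k\ge 4$ and an in-arc $f$ at $v$. Every Euler tour continues $f$ by exactly one out-arc $e$ at $v$, so
\[
 \operatorname{ec}(D)=\sum_{e\ \text{out at } v}\operatorname{ec}(D_{f,e}),
\]
where $D_{f,e}$ is obtained by detaching the pair $f,e$ from $v$ into a new vertex of degree $2$. Suppressing that new vertex gives an Eulerian digraph with fewer arcs at $v$.

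For the alternating orientation $D_0$, I would choose $e$ to be one of the two out-arcs cyclically adjacent to $f$ at $v$. The corresponding split can then be drawn in the plane, and $D_0$ restricted to it remains alternating. For non-adjacent $e$ the split graph is generally no longer plane, and I expect the key inequality to be
\[
 \operatorname{ec}\bigl((D_0)_{f,e}\bigr)\le \operatorname{ec}\bigl((D_0)_{f,e'}\bigr)\quad\text{for } e' \text{ adjacent to } f.
\]
A plausible way to prove this is to pass to Tutte's tree trinity. The arborescences of an alternating plane digraph are equinumerous with the spanning trees of the black-face graph $G_B$. I would then express $\operatorname{ec}(D_0)$ through deletion--contraction of $G_B$ at the face sectors around $v$.

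The comparison with an arbitrary $D$ is the main obstacle. $D$ is not alternating, so its splittings cannot be matched term by term with those of $D_0$. My first attempt would therefore be to show directly that at any vertex $v$ where $D$ is not alternating, $\operatorname{ec}$ does not increase under a local move towards alternation. Concretely, reversing a directed cycle through two consecutive same-direction arcs at $v$, chosen to reduce the number of non-alternations, preserves the Eulerian property. I would compare $\operatorname{ec}$ before and after such a reversal via the matrix-tree theorem. The reduced Laplacians differ by a rank-$\le 2$ perturbation supported on the cycle, so the difference of determinants is a sum of signed minors. Planarity should then let me pair the negative terms injectively with positive ones, using the Jordan-curve structure of the reversed cycle. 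Iterating the move ends at an orientation alternating at every vertex. It is then an alternating orientation, which by the face $2$-colouring is $D_0$ or its reverse, and both have the same count. I expect this monotonicity of the local move to be the hardest step.
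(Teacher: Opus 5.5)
\textbf{Genuine gap.} Your preliminary reductions (BEST, root independence, passing to blocks, and the observation that the only alternating orientations are $D_0$ and its reverse) are fine. One justification needs fixing: blocks need not occupy disjoint angular intervals, since they can nest in sectors of one another and a loop can enclose other blocks. The conclusion still holds, because the arcs of other blocks between consecutive arcs of a given block come in runs of even length. None of this touches the real difficulty, and the step that should carry the theorem is missing. The splitting branch rests on an inequality you only expect, and you abandon it yourself.

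In the local-move branch, three things fail or are unproved.
\begin{enumerate}
\item \emph{The rank claim is false.} Reversing a directed cycle $C$ preserves all outdegrees, so $L_D$ changes by exactly the skew-adjacency matrix $M_C$. On the vertices of a cycle of length $\ell$ this matrix has eigenvalues $2i\sin(2\pi j/\ell)$ and rank $\ell-1$ or $\ell-2$. For $\ell\ge5$ even the reduced perturbation has rank at least four, so the rank-two expansion of the determinant difference is available only for $\ell\le4$.
\item \emph{Termination is not justified.} Suppose an intermediate vertex of the cycle is already alternating and has degree at least $4$. The reversal swaps the types of one incoming and one outgoing arc there, which always destroys alternation at that vertex. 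You would need to prove that some cycle through the offending pair at $v$ lowers the total number of non-alternations, and you give no argument for this.
\item \emph{Monotonicity is only hoped for.} The claim that $\operatorname{ec}$ does not increase under such a move rests on ``planarity should let me pair the negative terms injectively''. This is the entire content of the theorem. It is not even clear that a monotone path of such moves exists from every $D$ to $D_0$, since the theorem only says $D_0$ is a global minimum.
\end{enumerate}

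\textbf{The missing idea.} The paper avoids paths between orientations altogether and compares term by term in a positive expansion.
\begin{itemize}
\item Using $L_D=\frac12(L_G-M_D)$ and expanding $\det\bigl(-\tfrac12M_D^r+\sum_e w_eb_eb_e^{\T}\bigr)$ over spanning forests gives $2^{n-1}a_r(D)=\sum_{F:\,c(F)\text{ odd}}\det M_{D/F}^{r_F}$.
\item Each term equals $\Pf(M_{D/F}^{r_F})^2$, so it is a nonnegative integer.
\item Reversing edges changes the (aggregated) skew entries by even integers, so each term for $D$ has the same parity as the corresponding term for $D_0$.
\item Contracting a forest preserves alternation. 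The Murasugi--Stoimenow evaluation $P_{D_0,r}(-1)\in\{0,1\}$ then makes every alternating term equal to $0$ or $1$.
\item A nonnegative integer with the parity of $0$ or $1$ is at least that value, so the comparison holds summand by summand for an arbitrary Eulerian $D$.
\end{itemize}
Nothing in your plan plays the role of this parity rigidity, which is exactly what handles a non-alternating $D$ without any local moves.
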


The proof strategy is to compare arborescence counts through a forest expansion in skew-adjacency determinants. The Murasugi--Stoimenow lemma~\cite{murasugi2003} gives values in $\{0,1\}$ for an alternating orientation; parity and nonnegativity then show that this orientation minimizes every summand.

\begin{figure}[htb]
\centering

\begin{tikzpicture}[x=12mm,y=12mm,
  vertex/.style={circle,fill=black,inner sep=1.25pt},
  edge/.style={line width=.65pt},
  oriented/.style={edge,postaction={decorate},
    decoration={markings,mark=at position .53 with
      {\arrow{Stealth[length=2mm,width=1.45mm]}}}}]
\begin{scope}
  \coordinate (o) at (0,0);
  \coordinate (a) at (-1.35,1);
  \coordinate (b) at (-1.35,-1);
  \coordinate (c) at (1.35,-1);
  \coordinate (d) at (1.35,1);
  \draw[edge] (o)--(a)--(b)--cycle;
  \draw[edge] (o)--(c)--(d)--cycle;
  \foreach \v in {o,a,b,c,d} \node[vertex] at (\v) {};
  \node at (0,-1.5) {$G$};
\end{scope}
\begin{scope}[xshift=62mm]
  \coordinate (o) at (0,0);
  \coordinate (a) at (-1.35,1);
  \coordinate (b) at (-1.35,-1);
  \coordinate (c) at (1.35,-1);
  \coordinate (d) at (1.35,1);
  \fill[black!12] (o)--(a)--(b)--cycle;
  \fill[black!12] (o)--(c)--(d)--cycle;
  \foreach \s/\t in {o/a,a/b,b/o,o/c,c/d,d/o}
    \draw[oriented] (\s)--(\t);
  \draw[gray,densely dashed,line width=.4pt] (o) circle[radius=3.4mm];
  \foreach \v in {o,a,b,c,d} \node[vertex] at (\v) {};
  \node at (0,-1.5) {$D_0$};
\end{scope}
\end{tikzpicture}

\caption{A plane Eulerian graph and an alternating orientation. In the right-hand drawing, the shaded bounded faces are black, the unbounded face is white, and every edge has its black face on the left. At the central vertex, the incoming and outgoing arcs alternate.}
\label{fig:alternating}
\end{figure}

The statement of Conjecture~5.1 in~\cite{bandekar2025} specifies that the underlying graph is Eulerian, without explicitly restricting the orientations being compared. An unrestricted reading admits a simple counterexample: a directed $C_4$ has one arborescence at each root, whereas orienting its edges towards one bipartition gives two sinks and no arborescence. Strong connectivity also gives a different minimum. Embed $K_{2,4}$ as four noncrossing paths $u w_i v$. An alternating orientation directs two paths each way and has $a_r=2$ at every root and $\tau=12$. Directing three paths from $u$ to $v$ and one back gives a strongly connected orientation with $a_u=1$ and $\tau=10$, as follows by counting the outgoing choices at $u$ and $v$.

Remark~5.2 of~\cite{bandekar2025} identifies the intended nontrivial formulation through symmetric edge polytopes. For a plane Eulerian graph $G$ with at least one edge, the facets of the symmetric edge polytope of its dual correspond to planar duals of Eulerian orientations of $G$, and their normalized volumes equal the common rooted arborescence counts. The conjecture asks whether the facet corresponding to an alternating orientation has minimum volume. Accordingly, \cref{conj:bandekar-planar} makes the Eulerian comparison class explicit.

\subsection{Regularity of maximizing tournaments}
A tournament is an orientation of a complete graph. At odd order $n=2d+1$, it is regular when every outdegree equals $d$, equivalently when it is Eulerian. Bandekar et al.~\cite[Theorem~1.6]{bandekar2025} proved that the swirl tournament is, up to isomorphism, the unique regular tournament minimizing the number of arborescences. Its arborescence count provides a lower bound for the maximum over all tournaments of the same order.

The following theorem proves \cref{conj:bandekar-regular}.
\begin{theorem}\label{thm:regular-main}
Let $n$ be a positive odd integer. Every tournament maximizing $\tau(T)$ among all tournaments on $n$ vertices is regular.
\end{theorem}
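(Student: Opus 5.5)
We argue by contradiction and use a local improvement. Let $T$ be a tournament on $n=2d+1$ vertices that maximizes $\tau$, and suppose $T$ is not regular. The outdegrees sum to $\binom n2=nd$, so some vertex has outdegree at least $d+1$ and some vertex has outdegree at most $d-1$. The goal is to find a single arc, or a short cycle of arcs, whose reversal strictly increases $\tau$, which contradicts maximality. Regularity is exactly the Eulerian condition, and \cref{lem:root-independence} then gives $\tau=n\,a$; that fact is only needed afterwards.

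\textbf{Step 1: linearity under reversing one arc.} Let $L$ be the Laplacian whose principal cofactors count the arborescences at each root, so that $\tau(T)$ is the sum of the principal $(n-1)$-minors of $L$. Reversing an arc $u\to v$ changes only rows $u$ and $v$. The change is the rank-one matrix $(e_u+e_v)(e_v-e_u)^{\T}$. Every principal minor is therefore affine in the orientation of that single edge. Combinatorially,
\[
\tau(T)=\tau(T-uv)+N_{u\to v}(T),\qquad \tau(T')=\tau(T-uv)+N_{v\to u}(T),
\]
where $T'$ is $T$ with the arc reversed. Here $N_{u\to v}$ counts the arborescences of the oriented graph with $u\to v$ present that use that arc. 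Deleting the arc from such an arborescence leaves a spanning two-component in-forest of $T-uv$: one tree is rooted at $u$, and $v$ lies in the other tree, which carries the global root. So $\tau(T')>\tau(T)$ is equivalent to
\[
F(v;u)>F(u;v),
\]
where $F(x;y)$ counts two-tree spanning in-forests of $T-uv$ in which one tree is rooted at $x$ and $y$ lies in the other tree (the global root is summed over).

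\textbf{Step 2: choosing the arc and comparing forest counts.} Choose $u$ of maximum outdegree and $v$ of minimum outdegree, so $d^+(u)\ge d^+(v)+2$.

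If the arc is $v\to u$, the roles are reversed, and I would instead reverse a directed triangle $u\to w\to v\to u$ through an intermediate vertex $w$. Such a $w$ exists by counting: $u$ has at least $d+1$ out-neighbours, $v$ has at least $d+1$ in-neighbours, and only $2d-1$ vertices other than $u,v$ are available. Reversing a triangle preserves all outdegrees, so that case needs a separate argument, namely iterated single-arc reversals along the triangle, each controlled by Step 1. So the natural first target is the case where the arc is $u\to v$.

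To compare the two forest counts, I would build an injection from forests counted by $F(u;v)$ to forests counted by $F(v;u)$, which is not surjective. Such a forest is determined by choosing, for each non-root vertex other than $u$, one out-neighbour in $T-uv$. Vertex $u$ has more out-choices than $v$, so the forests in which $u$ is a component root should be fewer. I would make this precise by exchanging the out-arc choices of $u$ and $v$ on a common set of neighbours, and by rerouting along the tree paths that meet $\{u,v\}$. The surplus of at least two out-neighbours of $u$ over $v$ should then supply forests in $F(v;u)$ outside the image.

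\textbf{Main obstacle.} The hard part is Step 2: $F(u;v)$ and $F(v;u)$ are global quantities, and outdegree alone need not order them. If the direct injection fails, I would fall back on averaging. Sum the changes $N_{y\to x}-N_{x\to y}$ over all arcs $x\to y$ with $d^+(x)>d^+(y)$, and write each change as a cofactor of $L$ via the adjugate and the matrix-determinant lemma. Then show that this sum is strictly positive, by expressing it as a positive combination of $\sum_x (d^+(x)-d)^2$ times forest counts. A strictly positive sum forces some single reversal to increase $\tau$, which gives the desired contradiction.
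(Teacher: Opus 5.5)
\textbf{There is a genuine gap: the key inequality in Step~2 is never proved, and the triangle move cannot work.}

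Your Step~1 is correct. Reversing $u\to v$ changes $\tau$ by $F(v;u)-F(u;v)$, a difference of two-tree forest counts in $T-uv$.

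Step~2, however, is the whole proof, and there you only assert that forests with $u$ as a component root ``should be fewer.'' No injection is constructed. The averaging fallback rests on an equally unsupported positivity claim.

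This is the crux, not a detail. In the normalization $\Psi(T)=\det(X-M_T)=n2^{n-1}\tau(T)$, \Cref{lem:tour-arc} gives the change exactly as $2\det C\,(x_u-x_v-2-\eta)$, where $x_i=2d_i+1$ and $\eta=(q-p)^{\T}C^{-1}(q+p)$.
\begin{itemize}
\item Your comparison of out-choices in $T-uv$, namely $d_u-1$ versus $d_v$, is only the leading term $x_u-x_v-2=2(d_u-1-d_v)$.
\item The correction $\eta$ depends on the whole tournament. The available estimate is $|\eta|\le(n-2)/\min_i x_i$, which is below $2$ only when every outdegree is large.
\end{itemize}
To get that, the paper uses a global input missing from your outline. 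Hadamard's inequality applied to $2L_T+\beta J$ yields $\tau(T)\le U_n\exp\bigl(\frac12\sum_i g_n(x_i)\bigr)$. Comparing this with $\tau(\mathrm{SW}_n)>\frac9{10}U_n$ shows that every maximizer of odd order $n\ge5$ has minimum outdegree at least $\lfloor n/4\rfloor$ (\Cref{lem:tour-minimum}). Only then does reversing an arc with $d_u\ge d_v+2$ provably increase $\tau$.

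Your averaging fallback runs into the same obstruction. For arcs with $d_x=d_y+1$ the leading term vanishes, so the change is $-2\eta\det C$, which has no visible sign.

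The case $v\to u$ is also handled incorrectly. Reversing the directed triangle $u\to w\to v\to u$ preserves every outdegree, so it cannot reduce the imbalance. Step~1 is an identity, not a sign, so it does not ``control'' the individual reversals.

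The paper proceeds as follows.
\begin{itemize}
\item Once no arc goes from a vertex to one of outdegree at least two smaller, a counting argument (\Cref{lem:tour-degree-path}) confines all degrees to $\{d-1,d,d+1\}$.
\item That argument also yields a directed path $u\to w\to v$ with degrees $d+1,d,d-1$. Your count produces $w$, but not $d_w=d$.
\item Only the two arcs of this path are reversed, which makes all three degrees equal to $d$.
\item The first reversal has degree difference $1$ and may lower $\Psi$ by up to $2\det C_1$. The second gains at least $2\det C_2$.
\item A Schur-complement comparison through the common block $C_0$ gives $\det C_2-\det C_1\ge\frac{n-1}{n-2}\det C_0>0$ (\Cref{lem:tour-two-arcs}).
\end{itemize}
Both estimates rely on $x_i\ge n-2$, which comes from the degree confinement, which in turn needs the minimum-degree bound. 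Without that bound and this quantitative two-step comparison, your outline does not reach a contradiction.
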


The proof strategy is to compare arborescence counts under arc reversals. A minimum-outdegree bound controls the inverse matrices in the determinant comparison; the key observation is that any nonregular maximizer would admit a strictly improving reversal of one arc or of a directed path of length two.

\subsection{Complete bipartite graphs}
Problem~5.5 of~\cite{bandekar2025} asks for the orientations of $K_{n,m}$ maximizing $a$ in the Eulerian class and $\tau$ in the unrestricted class. A symmetric determinant identity provides a common approach to both questions: an inequality determined by the column sums of a $0$--$1$ matrix encoding the orientation, together with an exact matrix condition for equality.

For $2\le n\le m$, let
\[
 M_U(n,m)=\max\{\tau(D):D\text{ is an orientation of }K_{n,m}\}.
\]
The first consequence determines the maximum and all maximizing orientations when $m\ge n(n-1)$.
\begin{theorem}\label{thm:intro-bipartite-large}
If $2\le n\le m$ and $m\ge n(n-1)$, then
\[
 M_U(n,m)=2n^{n+m-1}\left(\frac{n-1}{n}\right)^{n(n-1)}.
\]
Up to permutations within the two parts, the unique maximizing orientation has, for each vertex of the smaller part, exactly $n-1$ larger-part vertices whose sole incoming arc comes from that vertex. The remaining $m-n(n-1)$ larger-part vertices are sources.
\end{theorem}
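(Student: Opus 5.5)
We encode the orientation by the $0$--$1$ matrix $X\in\{0,1\}^{n\times m}$ with $X_{ab}=1$ exactly when the arc between $a\in A$ (the part of size $n$) and $b\in B$ (the part of size $m$) points from $a$ to $b$. Write $c_b=n-\sum_a X_{ab}$ for the outdegree of $b$, and $d_a=\sum_b X_{ab}$ for the outdegree of $a$. By the matrix-tree theorem, $\tau(D)$ is the sum of the principal cofactors of the out-Laplacian
\[
L=\begin{pmatrix}\diag(d_a) & -X\\ -(J-X)^{\T} & \diag(c_b)\end{pmatrix}.
\]
If some $b$ is a sink ($c_b=0$), only the root $b$ survives. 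Since $m\ge n(n-1)$, a direct estimate should show that this case is far from extremal, so we discard it and assume every $c_b\ge 1$. We then eliminate the $B$-block by a Schur complement. Each cofactor becomes $\prod_b c_b$ times a cofactor, or a bordered determinant, of the $n\times n$ matrix
\[
M=\diag(d_a)-X\diag(c_b)^{-1}(J-X)^{\T}.
\]
Summing over roots in $A$ and roots in $B$ should give the symmetric determinant identity announced above. In it, $\tau(D)$ is $\prod_b c_b$ times a sum of principal minors of a matrix whose entries are controlled by the column sums $n-c_b$.

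The key inequality is the following. The column $b$ contributes a rank-one-type term $x_b(\one-x_b)^{\T}/c_b$, where $x_b$ is the $b$-th column of $X$. For each column we bound its contribution to the relevant minors by AM--GM, or Hadamard's inequality applied after symmetrization, in terms of $c_b$ alone. This should reduce $\tau(D)$ to at most $F(c)=\Phi\bigl(\sum_b g(c_b)\bigr)\prod_b c_b$ for explicit functions $\Phi,g$. Sources in $B$ ($c_b=n$) are leaves and contribute a factor $n$. A vertex $b$ with $c_b=n-1$ has exactly one in-arc, from some $a$, and contributes a factor $n-1$ while "feeding" $a$. For each $a$ to be reachable, i.e. to have outdegree $d_a\ge1$, the columns must cover every row. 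The optimization of $F$ over integer vectors $(c_b)$ then trades the loss from lowering $c_b$ below $n$ against the gain in the $A$-minors. With $m\ge n(n-1)$, the optimum should be exactly $n(n-1)$ columns with $c_b=n-1$ and the rest with $c_b=n$. Plugging in gives $n^{m-n(n-1)}(n-1)^{n(n-1)}$ times the $A$-part. Each $a$ has $n-1$ private in-neighbours and $M$ becomes a scalar multiple of $nI-J$, and evaluating yields the claimed value $2n^{n+m-1}\bigl(\tfrac{n-1}{n}\bigr)^{n(n-1)}$.

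For uniqueness, we track equality in each step. AM--GM/Hadamard equality should force every non-source column to have weight exactly $1$, and the row loads to be balanced: every $a$ receives the same number $n-1$ of weight-one columns. It then remains to check that the integer optimum of $F$ is unique, which gives the described orientation up to permutations of the two parts.

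I expect the main obstacle to be the second step: finding the correct per-column inequality whose equality case is exact. The $B$-rooted cofactors and the $A$-rooted cofactors have different shapes, and they must be bounded simultaneously by a single function of the column sums. I would first try to write $\tau$ as $\det(I+\text{PSD})$-type expressions in the symmetrized matrix $\diag(c)^{-1/2}$-scaled blocks and apply $\det\le$ product of diagonal entries. If that fails, I would fall back on a concavity argument for $\log\tau$ along reversals of single arcs incident to $B$. The discrete optimization, and the threshold $m\ge n(n-1)$ that guarantees enough columns for balanced loads, should then be routine.
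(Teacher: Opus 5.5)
Your outline has the right architecture (eliminate the $B$-block, bound by a function of the column sums, optimize over integers, track equality). But the step that carries the proof is only announced, and the tool you name for it would not be sharp. The determinant does not split over columns; what splits exactly is a pair of traces. Conjugating $L$ by $\diag(I_n,-I_m)$ and adding a rank-one term, using $\adj L=\one\,(a_v(D))_v^{\T}$, gives $n\tau(D)=\det\bigl(\begin{smallmatrix}\diag(d_a)+J_n&X\\-X^{\T}&\diag(c_b)\end{smallmatrix}\bigr)$. Hence, when all $c_b\ge1$, $\tau(D)=\frac{\prod_bc_b}{n}\det(W+J_n)$, where $W=\diag(d_a)+X\diag(c_b)^{-1}X^{\T}=\sum_b\bigl(\diag(x_b)+x_bx_b^{\T}/c_b\bigr)$ is positive semidefinite. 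Your nonsymmetric $M$ is just $WP$ with $P=I_n-J_n/n$. Column $b$ contributes exactly $k_b=n-c_b$ to $\tr(PWP)$ and $k_b/c_b$ to $\frac1n\one^{\T}W\one$. Splitting $\mathbb R^n=\operatorname{span}(\one)\oplus\one^\perp$ (Fischer's inequality, or concavity of $\log\det$ averaged over coordinate permutations) and then applying AM--GM on $\one^\perp$ yields
\[
 \tau(D)\le\frac{\prod_bc_b}{n}\Bigl(\frac K{n-1}\Bigr)^{n-1}(n+H),\qquad K=\sum_bk_b,\quad H=\sum_b\frac{k_b}{c_b}.
\]
This involves two additive statistics, not a single $\sum_b g(c_b)$. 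Using ``$\det\le$ product of diagonal entries'' in coordinates instead overshoots the target. At the extremal orientation $W+J_n=nI_n+J_n$ has determinant $2n^n$ but diagonal product $(n+1)^n>2n^n$.

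The discrete step is not routine, and your equality bookkeeping is misattributed. Equality in the determinant step only says $W=\frac K{n-1}P+\frac HnJ_n$. Every balanced-design incidence matrix satisfies this whatever its block size (compare \Cref{thm:bipartite-integral}), so AM--GM/Hadamard equality cannot force weight-one columns. What forces them is a strict comparison. Put $p=(n-1)/n$; after dividing by $n^m$, zero columns are neutral. Replacing a column of weight $2\le k<n$ by $k$ columns of weight one strictly increases the bound. This reduces to $p^k\bigl(n+\frac k{n-1}\bigr)>\bigl(1-\frac kn\bigr)\bigl(n+\frac k{n-k}\bigr)$, which needs its own proof (the paper uses a recurrence in $k$). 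You must then show that $F_n(K)=\frac{p^K}{n}\bigl(\frac K{n-1}\bigr)^{n-1}\bigl(n+\frac K{n-1}\bigr)$ has $K=n(n-1)$ as its unique positive integer maximizer. $\log F_n$ is concave, but its slope at $n(n-1)$ is only about $1/(6n^3)$, so excluding $n(n-1)+1$ requires a careful series estimate. Only after this does equality with weight-one columns give $W=\frac n{n-1}\diag(h_a)=nI_n$, where $h_a$ counts the weight-one columns fed by $a$. This forces $h_a=n-1$ and hence the stated orientation. The sink case, by contrast, really is a direct estimate. Combine $\tau(D)\le\prod_{v\ne\text{sink}}d^+(v)$, Bernoulli's inequality $1-k/n\le p^k$ on the $c_b$, and AM--GM on the $d_a$ to get a strictly smaller bound; you should still write it out.
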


When the average outdegree $d=nm/(n+m)$ is an integer, we prove
\[
 M_U(n,m)\le 2\left(\frac n{n-1}\right)^{n-1}d^{n+m-1}.
\]
Equality is characterized by an incidence design with prescribed block size and pair concurrence. A residual construction from McFarland difference sets~\cite{mcfarland1973} supplies infinitely many equality cases with $n<m<n(n-1)$. For Eulerian orientations, the count reduces to a Gram determinant. Integer common-neighbor counts yield a sharper bound whenever $2(n-1)\nmid m$. We characterize equality and construct infinite attaining families from balanced sign matrices; in particular, we determine the maximum on $K_{n,2n}$ for every $n=2^r\ge4$.

The Eulerian problem also has a geometric interpretation, complementing the facet decompositions of bipartite symmetric edge polytopes studied by K\'alm\'an and T\'othm\'er\'esz~\cite{kalman2022}. For even $n,m$, consider the cographic matroid of $K_{n,m}$, whose bases are the complements of its spanning trees. As recalled in~\cite[Section~1.2]{bandekar2025}, the facets of its symmetric edge polytope correspond to Eulerian orientations, and their normalized volumes are the common rooted arborescence counts. Our Eulerian bounds and equality constructions therefore give volume bounds and explicit attaining facets for this family, without requiring a planar embedding.

The paper is organized as follows. \Cref{sec:prelim} collects the matrix-tree identities, determinant estimates and results used in the proofs. Sections~\ref{sec:alternating} and~\ref{sec:tournaments} prove Theorems~\ref{thm:planar-main} and~\ref{thm:regular-main}, respectively. \Cref{sec:problem55} establishes the complete bipartite bounds and their equality cases. \Cref{sec:problem53} discusses maximizing regular tournaments. Proofs of the spectral product bound, the Eulerian sharpness constructions and the equivalent formulations of near double regularity are collected in the appendices.

\section{Preliminaries}\label{sec:prelim}
A digraph is called connected if its underlying undirected graph is connected.
Unless stated otherwise, all graphs and digraphs in this paper are finite
and connected; loops and parallel edges are allowed. A disconnected graph
has no spanning tree, and a digraph with disconnected underlying graph
has no spanning arborescence, so the corresponding counts are zero.
A loop contributes two to the undirected degree and one to each directed
degree. It belongs to no spanning tree. A plane graph has a fixed
embedding, which determines the cyclic order of incident arcs in any
orientation. A loop is recorded twice in this order, once at its head
as incoming and once at its tail as outgoing.
A graph is Eulerian when all its degrees are even; a
digraph is Eulerian when its indegree and outdegree are equal at each vertex.
Under the standing connectivity assumption, these conditions are equivalent
to the existence of an undirected or directed Euler circuit, respectively.

For a digraph $D$ and a vertex $r$, let $a_r(D)$ count the spanning
arborescences directed towards $r$, and let
$\tau(D)=\sum_{r\in V(D)}a_r(D)$. A single vertex, with no selected edges,
counts as one arborescence. For an Eulerian digraph, let $a(D)$ denote
the common rooted count, so that $\tau(D)=|V(D)|a(D)$.

Let $A_D$ be the adjacency matrix of the nonloop arcs and let
$M_D=A_D-A_D^{\T}$ be its skew adjacency matrix. Thus $(A_D)_{uv}$
counts arcs $u\to v$ for $u\ne v$, and $(A_D)_{uu}=0$.
Let $\one$ be the all-ones vector, $I$ the identity matrix and
$J=\one\one^{\T}$, with dimensions clear from the context. The
out-Laplacian is
\[
 L_D=\diag(A_D\one)-A_D.
\]
For an undirected multigraph $G$, the off-diagonal entry of its Laplacian
$L_G$ is minus the edge multiplicity, and its diagonal entry counts
incident nonloop edges. Loops therefore contribute zero to $L_D$, $L_G$
and $M_D$. Eulerian balance gives
\[
 L_D=\frac12(L_G-M_D)
\]
for every Eulerian orientation $D$ of $G$. For a square matrix $C$,
let $C^r$ be the principal submatrix obtained by deleting row and
column $r$. Its adjugate $\adj C$ is the transpose of its cofactor
matrix: the $(i,j)$ entry is $(-1)^{i+j}$ times the determinant obtained
by deleting row $j$ and column $i$ from $C$. In particular,
$C\adj C=(\adj C)C=(\det C)I$. Transposition is denoted by $C^{\T}$. Vector norms are
Euclidean, and $\|C\|_F^2=\sum_{i,j}|C_{ij}|^2$ denotes the squared
Frobenius norm.

For a linear map $C:V\to V$ on a finite-dimensional Euclidean space
and an invariant subspace $\mathcal H\subseteq V$, meaning
$C\mathcal H\subseteq\mathcal H$, the restriction
$C|_{\mathcal H}:\mathcal H\to\mathcal H$ is defined by
$C|_{\mathcal H}(x)=Cx$ for $x\in\mathcal H$. We use the same notation
when $C$ is represented by a matrix. The determinant
$\det(C|_{\mathcal H})$ is computed in any basis of $\mathcal H$;
it is independent of that basis. For $\mathcal H=\{0\}$, this
determinant is $1$.
If $P$ is the orthogonal projection onto a subspace $\mathcal H$,
the compression of $C$ to $\mathcal H$ is $(PCP)|_{\mathcal H}$;
this definition does not require $C\mathcal H\subseteq\mathcal H$.

\subsection{Matrix-tree and determinant identities}
The directed and undirected matrix-tree theorems will be used in the
following convention; see \cite[Theorems~2.4--2.5]{bandekar2025}.
\begin{lemma}[Matrix-tree theorem]\label{lem:matrix-tree}
For a digraph $D$ and a vertex $r$,
\[
 a_r(D)=\det L_D^r.
\]
For an undirected graph $G$, $\det L_G^r$ is its number of spanning
trees. The same identities hold with edge or arc weights, with each
tree counted by the product of its weights. Moreover,
\[
 \operatorname{adj}L_D=\one\,(a_r(D))_{r\in V(D)}^{\T},
\]
where the vector on the right lists the rooted counts.
\end{lemma}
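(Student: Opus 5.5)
This is the standard matrix-tree theorem, cited from Bandekar et al., so the plan is the classical argument specialized to the conventions fixed above: $L_D=\diag(A_D\one)-A_D$ has zero row sums, and $a_r(D)$ counts spanning arborescences directed towards $r$. Loops contribute nothing to $L_D$ and belong to no arborescence, so we may delete them first. We prove the weighted version with an indeterminate $w_e$ for each arc $e$; the unweighted statement is the specialization $w_e=1$, with parallel arcs contributing additively to the entries.

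\emph{Rooted formula.} Expand $\det L_D^r$ by multilinearity in the rows. Each row $u\ne r$ of $L_D$ equals $\sum_{e=(u\to v)} w_e(\epsilon_u-\epsilon_v)$, restricted to the columns other than $r$, where $\epsilon_u$ denotes the $u$-th standard basis vector. Expanding gives a sum over functions $f$ that assign to each $u\ne r$ one outgoing arc $u\to f(u)$. The summand is $\prod w$ times the determinant of the matrix whose row $u$ is $\epsilon_u-\epsilon_{f(u)}$, with $\epsilon_r$ treated as zero.

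If the functional graph of $f$ has no cycle, every vertex flows to $r$, so $f$ is an arborescence towards $r$. Ordering the vertices by distance to $r$ makes the matrix unitriangular, so its determinant is $1$. If $f$ has a cycle $C$, then the sum of the rows indexed by $C$ is zero, so the determinant vanishes. This gives $\det L_D^r=\sum_T\prod_{e\in T}w_e$, the weighted count of arborescences towards $r$.

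\emph{Undirected case.} Replace each edge $uv$ of $G$ by the two arcs $u\to v$ and $v\to u$, each with the edge's weight. The resulting digraph has $L_D=L_G$. An arborescence towards $r$ is then the same as a spanning tree, oriented towards $r$, so $\det L_G^r$ counts spanning trees.

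\emph{Adjugate formula.} This is the step needing the most care with the paper's convention that $(\adj C)_{ij}$ is the signed minor deleting row $j$ and column $i$. Since $L_D\one=0$, every column of $\adj L_D$ lies in $\ker L_D$ when $\det L_D=0$, because $L_D\adj L_D=(\det L_D)I=0$. We split into two cases.

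\begin{itemize}
\item If $\rank L_D\le n-2$, all $(n-1)$-minors vanish, so $\adj L_D=0$. Every rooted count is then a principal minor, hence zero, and the identity holds.
\item If $\rank L_D=n-1$, then $\ker L_D=\operatorname{span}\one$. Each column $j$ of $\adj L_D$ is therefore a multiple $c_j\one$. The diagonal entry $(\adj L_D)_{jj}=\det L_D^j=a_j(D)$ gives $c_j=a_j(D)$.
\end{itemize}

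In both cases $\adj L_D=\one\,(a_r(D))_r^{\T}$. The rank case split can be avoided by working over the field of rational functions in the weights $w_e$, where the same kernel argument applies generically and the identity then specializes.

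The main obstacle is the cycle-cancellation step and tracking the adjugate index convention; both are routine once set up as above.
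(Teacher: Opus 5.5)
Your proof is correct, and it is more self-contained than the paper's. The paper does not reprove the base identities: it cites the directed and undirected matrix-tree theorems from Bandekar et al.\ and justifies only two additions. For the weighted form, it replaces an arc of nonnegative integer weight by that many parallel copies and then uses polynomial extension. For the adjugate identity, it uses exactly your rank split: when $\rank L_D=n-1$, the relation $L_D\one=0$ forces the columns of $\adj L_D$ to be constant, and otherwise both sides vanish.

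You instead derive the rooted formula directly, by the classical row-multilinear expansion over choices of one outgoing arc per nonroot vertex, with cycle cancellation and unitriangularity. You then obtain the undirected case by bidirecting each edge. Because the weights enter the expansion directly, your argument gives the weighted identity for arbitrary real or indeterminate weights at once, so the copy-and-extend step is unnecessary. The paper's route is shorter only because it relies on the citation.

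One small caveat concerns your closing remark. Over the rational function field in the weights of $D$ itself, $L_D$ has rank $n-1$ only if $D$ has at least one arborescence. To remove the case split, you would embed $D$ in the complete digraph with indeterminate weights and then specialize. Since your explicit case split is already complete, nothing in the proof depends on this.
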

For nonnegative integer weights, the weighted identity follows by
replacing an arc by the corresponding number of copies; polynomial
extension gives the general weighted form. To see the adjugate identity,
if $\rank L_D=n-1$, the identity
$L_D\one=0$ makes every column of its adjugate constant, and its
diagonal entries are the principal cofactors. If the rank is smaller,
both sides are zero.

\begin{lemma}\label{lem:root-independence}
An Eulerian digraph is strongly connected, and its rooted
arborescence counts are independent of the root. If it has $n$ vertices
and $\mathcal H=\one^\perp\subseteq\mathbb R^n$, then
\[
 \det(L_D|_{\mathcal H})=na(D).
\]
\end{lemma}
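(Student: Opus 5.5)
The lemma has three parts: strong connectivity, independence of the root, and the determinant identity. I will prove them in that order, using \cref{lem:matrix-tree} for the last two.

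\emph{Strong connectivity.} Let $D$ be Eulerian. Suppose $D$ is not strongly connected. By the standing convention $D$ is connected, so there is a nonempty proper vertex set $S$ that no arc leaves. Such an $S$ exists: take the set of vertices reachable from some vertex whenever that set is proper. Sum outdegree minus indegree over $S$. Every arc with both ends in $S$ contributes zero, and loops contribute zero. Eulerian balance makes the total zero, so the number of arcs leaving $S$ equals the number entering $S$. Connectivity of the underlying graph forces at least one arc between $S$ and its complement. That arc must enter $S$, so some arc also leaves $S$, a contradiction.

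\emph{Root independence.} Here I use the adjugate identity $\operatorname{adj}L_D=\one\,(a_r(D))_r^{\T}$ from \cref{lem:matrix-tree}. Eulerian balance gives $\one^{\T}A_D=(A_D\one)^{\T}$, hence $\one^{\T}L_D=0$. Then
\[
 0=\one^{\T}L_D\operatorname{adj}L_D=\bigl(\one^{\T}\operatorname{adj}L_D\bigr)(\det L_D)\text{-free form}
\]
is not quite the right bookkeeping, so instead I will argue as follows. Since $(\operatorname{adj}L_D)L_D=(\det L_D)I=0$, every row of $\operatorname{adj}L_D$ lies in the left kernel of $L_D$. The $i$-th row of $\one\,(a_r)^{\T}$ is the vector $(a_r)^{\T}$ itself, so $(a_r)^{\T}L_D=0$.

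To conclude, I need the left kernel to be one-dimensional. Strong connectivity gives an arborescence into any root, so some $a_r\ge1$. By the matrix-tree theorem that principal cofactor is nonzero, so $\rank L_D=n-1$. The left kernel is therefore spanned by $\one^{\T}$. Hence $(a_r)$ is a multiple of $\one$, that is, all rooted counts are equal.

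\emph{Determinant identity.} Because $L_D\one=0$ and $\one^{\T}L_D=0$, the subspace $\mathcal H=\one^\perp$ is invariant under $L_D$. I will work in an orthonormal basis consisting of $\one/\sqrt n$ followed by an orthonormal basis of $\mathcal H$. In this basis $L_D$ is block diagonal, with a $0$ block on $\one$ and the block $L_D|_{\mathcal H}$. Consequently the characteristic polynomial factors as
\[
 \det(tI-L_D)=t\,\det(tI-L_D|_{\mathcal H}).
\]
Now compare coefficients of $t$, up to the sign $(-1)^{n-1}$. In $\det(tI-L_D)$, this coefficient is the sum of the $(n-1)$-principal minors of $L_D$, namely $\sum_r\det L_D^r=\sum_r a_r(D)=na(D)$. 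On the right-hand side, the same coefficient is $\det(L_D|_{\mathcal H})$. This gives $\det(L_D|_{\mathcal H})=na(D)$.

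The case $n=1$ needs a separate check: then $\mathcal H=\{0\}$, the determinant is $1$ by convention, and $a=1$.

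The only delicate step is the rank argument in the root-independence part, which needs strong connectivity to exclude the degenerate case in which $\operatorname{adj}L_D=0$.
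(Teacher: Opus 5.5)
Your proof is correct and follows essentially the same route as the paper: strong connectivity by summing the balance equations over a vertex set with one-sided boundary (you take a closed reachable set, the paper takes a source component of the condensation); root independence from $\rank L_D=n-1$ together with $\one^{\T}L_D=0$; and the determinant identity from the block form $0\oplus L_D|_{\mathcal H}$ by comparing coefficients of $t$. The only fix is presentational: delete the aborted display ending in ``$(\det L_D)$-free form'', which is meaningless and is not used in the argument that follows.
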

\begin{proof}
A source in the condensation of $D$ into strongly connected components
has no incoming arcs. Summing the balance equations over that component
shows that it has no outgoing arcs either. Connectedness forces the
condensation to have one vertex, so $D$ is strongly connected.

Let $n=|V(D)|$. By the matrix-tree theorem, strong connectivity gives
$\rank L_D=n-1$. Moreover, $L_D\one=0$ and $\one^{\T}L_D=0$.
Its left and right kernels are therefore spanned by $\one$, so its
adjugate is a scalar multiple of $\one\one^{\T}$. Its principal
cofactors coincide, proving root independence.
In an orthonormal basis beginning with $n^{-1/2}\one$, the matrix
is $0\oplus (L_D|_{\mathcal H})$. The coefficient of $t$ in
$\det(tI+L_D)$ is both $\det(L_D|_{\mathcal H})$ and the sum of its
$n$ principal cofactors. This proves the determinant identity.
The one-vertex case uses the empty determinant convention.
\end{proof}

The standard determinant identities below may also be found in
\cite{hornjohnson2013}. The following scaled form of
\cite[Lemma~2.8]{bandekar2025} relates a shifted Laplacian determinant to the total arborescence count.
\begin{lemma}\label{lem:tour-shift}
For a digraph $D$ of order $n\ge1$ and real numbers $s,\alpha$,
\[
 \det(sL_D+\alpha J)=n\alpha s^{n-1}\tau(D).
\]
\end{lemma}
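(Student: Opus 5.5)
We prove the identity $\det(sL_D+\alpha J)=n\alpha s^{n-1}\tau(D)$ by treating both sides as polynomials in $\alpha$ and using the rank-one structure of $J=\one\one^{\T}$. The matrix determinant lemma, in its adjugate form valid for singular matrices, gives
\[
 \det(C+\alpha\,\one\one^{\T})=\det C+\alpha\,\one^{\T}(\adj C)\one
\]
for any square matrix $C$. We apply this with $C=sL_D$. Because the determinant is a polynomial in the entries, this form needs no invertibility. It can be checked by expanding $\det(C+\alpha\one\one^{\T})$ multilinearly in the columns. Every term that uses two or more columns of $\alpha\one\one^{\T}$ has two equal columns $\alpha\one$ and vanishes. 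The terms using exactly one such column, placed in column $j$, give $\alpha\sum_i(\adj C)_{ji}$, that is, $\alpha\,\one^{\T}(\adj C)\one$.

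Next we evaluate the two terms. Since $L_D\one=0$, the matrix $sL_D$ is singular, so $\det(sL_D)=0$. The adjugate is homogeneous of degree $n-1$, so $\adj(sL_D)=s^{n-1}\adj L_D$; this also holds for $s=0$ when $n\ge2$, and for $n=1$ with the convention $\adj$ of a $1\times1$ matrix equal to $(1)$. By \cref{lem:matrix-tree}, $\adj L_D=\one\,(a_r(D))_r^{\T}$. Hence
\[
 \one^{\T}(\adj L_D)\one=(\one^{\T}\one)\sum_r a_r(D)=n\,\tau(D).
\]
Combining the pieces gives $\det(sL_D+\alpha J)=n\alpha s^{n-1}\tau(D)$.

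The main point needing care is that the adjugate identity of \cref{lem:matrix-tree} covers all ranks. In the rank-deficient case both sides are zero, as the paper notes, so no strong-connectivity assumption is needed. The degenerate case $n=1$ should be checked separately. There $L_D=(0)$, $J=(1)$, and $\tau(D)=1$ by the single-vertex convention, so both sides equal $\alpha$ (reading $s^0=1$). Disconnected digraphs need no special treatment, since their counts and the relevant cofactors are all zero.
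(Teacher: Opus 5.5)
Your proof is correct, and it takes a different route from the paper's. You apply the rank-one adjugate formula (the third identity of \cref{lem:sylvester}) to $sL_D$. You then evaluate $\one^{\T}(\adj L_D)\one$ using the full adjugate identity $\adj L_D=\one\,(a_r(D))_r^{\T}$ from \cref{lem:matrix-tree}.

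The paper instead works in an orthonormal basis beginning with $n^{-1/2}\one$. There $L_D\one=0$ makes $L_D$ block upper triangular with lower-right block $C$, and $J$ becomes $\diag(n,0,\ldots,0)$. Hence $\det(sL_D+\alpha J)=n\alpha s^{n-1}\det C$. The paper then identifies $\det C$ as the coefficient of $t$ in $\det(tI+L_D)=t\det(tI+C)$. By the principal-minor expansion this coefficient is $\sum_r\det L_D^r=\tau(D)$.

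The trade-off is in what each route needs from the matrix-tree lemma. The paper uses only the principal-cofactor form $a_r(D)=\det L_D^r$, that is, only the trace of $\adj L_D$. Your route needs the stronger fact that every column of $\adj L_D$ is constant, because $\one^{\T}(\adj L_D)\one$ sums all $n^2$ cofactors, not just the $n$ diagonal ones. You correctly note that this identity holds in every rank, and your handling of $s=0$ and $n=1$ is sound.

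In return, your argument avoids the change of basis. It is exactly the computation the paper itself performs later in the proof of \cref{lem:bipartite-determinant}. The paper's route has its own by-product: $\tau(D)$ equals the determinant of the compression of $L_D$ to $\one^\perp$, which parallels \cref{lem:root-independence}.
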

\begin{proof}
In an orthonormal basis with first vector $n^{-1/2}\one$, the identity
$L_D\one=0$ gives
\[
 L_D=\begin{pmatrix}0&z^{\T}\\0&C\end{pmatrix},
 \qquad J=\begin{pmatrix}n&0\\0&0\end{pmatrix}.
\]
The coefficient of $t$ in $\det(tI+L_D)=t\det(tI+C)$ is $\det C$.
Expansion by principal minors and \cref{lem:matrix-tree} identify the
same coefficient as $\sum_r a_r(D)=\tau(D)$. Taking the block
determinant of $sL_D+\alpha J$ proves the formula, also for $s=0$.
At $n=1$, the lower block is empty and $s^{n-1}=1$.
\end{proof}

\begin{lemma}\label{lem:regular-determinant}
Let $T$ be a regular tournament of odd order $n$, and let
$\Phi_{M_T}(x)=\det(xI-M_T)$. Then
\begin{equation}\label{eq:regular-arborescence}
 a(T)=\frac{\Phi_{M_T}(n)}{n^2 2^{n-1}}.
\end{equation}
\end{lemma}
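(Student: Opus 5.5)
For a regular tournament $T$ of order $n=2d+1$, the underlying graph is $K_n$, so $L_G=nI-J$. The Eulerian identity then gives
\[
 L_T=\tfrac12(nI-J-M_T).
\]
I would combine this with \cref{lem:root-independence}, computing $\det(L_T|_{\mathcal H})=na(T)$ on $\mathcal H=\one^\perp$. The plan is to express that restricted determinant through the characteristic polynomial of $M_T$ evaluated at $n$.

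\emph{Step 1: invariance of $\mathcal H$.} Regularity gives $A_T\one=d\one$ and $A_T^{\T}\one=d\one$, since every indegree is also $d$. Hence $M_T\one=0$ and $\one^{\T}M_T=0$. Since $M_T$ is skew-symmetric, it maps $\mathcal H$ into itself. The matrix $J$ vanishes on $\mathcal H$. So on $\mathcal H$ we have
\[
 L_T|_{\mathcal H}=\tfrac12\,(nI-M_T)|_{\mathcal H},
\]
and therefore
\[
 \det(L_T|_{\mathcal H})=2^{-(n-1)}\det\bigl((nI-M_T)|_{\mathcal H}\bigr).
\]

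\emph{Step 2: splitting off the $\one$ direction.} Take an orthonormal basis beginning with $n^{-1/2}\one$. Because both $\mathcal H$ and $\operatorname{span}(\one)$ are invariant under $M_T$, the matrix $nI-M_T$ is block diagonal, equal to $(n)\oplus(nI-M_T)|_{\mathcal H}$; the $1\times1$ block is $n$ because $M_T\one=0$. Hence
\[
 \Phi_{M_T}(n)=\det(nI-M_T)=n\det\bigl((nI-M_T)|_{\mathcal H}\bigr).
\]

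\emph{Step 3: combining.} Putting the two steps together,
\[
 na(T)=2^{-(n-1)}\,\frac{\Phi_{M_T}(n)}{n},
\]
which is \eqref{eq:regular-arborescence}.

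The only point needing care is Step 1, namely that regularity forces $M_T\one=0$. This uses in-regularity as well as out-regularity, and for a tournament the latter implies the former because indegree equals $n-1-d=d$. The case $n=1$ is handled by the empty-determinant convention: both sides equal $1$.
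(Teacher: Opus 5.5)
Your proof is correct and is essentially the paper's argument. Both rest on the identity $nI-M_T=2L_T+J$ and on splitting off the $\one$ direction. The paper packages that block computation in \cref{lem:tour-shift} (with $s=2$, $\alpha=1$, giving $n2^{n-1}\tau(T)$, then $\tau(T)=na(T)$), whereas you do the split by hand on $\one^\perp$ and cite \cref{lem:root-independence} directly.
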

\begin{proof}
Since $A_T+A_T^{\T}=J-I$ and every outdegree is $(n-1)/2$,
\[
 2L_T=nI-J-M_T.
\]
Thus $nI-M_T=2L_T+J$, and \cref{lem:tour-shift} with $s=2$,
$\alpha=1$ gives
$\Phi_{M_T}(n)=n2^{n-1}\tau(T)=n^2 2^{n-1}a(T)$.
\end{proof}

\begin{lemma}[Sylvester and Schur identities]\label{lem:sylvester}
If $U$ is $p\times q$ and $V$ is $q\times p$, then
\[
 \det(I_p+UV)=\det(I_q+VU).
\]
For a block matrix whose square lower-right block $C$ is invertible,
\[
 \det\begin{pmatrix}E&F\\G&C\end{pmatrix}
 =\det C\,\det(E-FC^{-1}G).
\]
For a square matrix $C$ and column vectors $u,v$ of matching size,
\[
 \det(C+uv^{\T})=\det C+v^{\T}(\operatorname{adj}C)u.
\]
When $C$ is invertible, this is
$\det C\,(1+v^{\T}C^{-1}u)$.
\end{lemma}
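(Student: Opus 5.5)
We prove the three identities in turn. Each reduces to multiplicativity of the determinant applied to a suitable block factorization. For Sylvester's identity, we would consider the $(p+q)\times(p+q)$ block matrix
\[
 W=\begin{pmatrix} I_p & -U\\ V & I_q\end{pmatrix}.
\]
We factor it in two ways, as a block lower-triangular matrix times a block upper-triangular matrix and conversely:
\[
 \begin{pmatrix} I_p & 0\\ V & I_q\end{pmatrix}\begin{pmatrix} I_p & -U\\ 0 & I_q+VU\end{pmatrix}
 =W=
 \begin{pmatrix} I_p+UV & -U\\ 0 & I_q\end{pmatrix}\begin{pmatrix} I_p & 0\\ V & I_q\end{pmatrix}.
\]
Both equalities are checked by multiplying out the blocks. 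Block-triangular matrices with identity diagonal blocks have determinant equal to the product of the determinants of their diagonal blocks. Taking determinants therefore gives $\det(I_q+VU)=\det W=\det(I_p+UV)$.

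For the Schur identity, with $C$ invertible, we would write
\[
 \begin{pmatrix}E&F\\G&C\end{pmatrix}
 =\begin{pmatrix}I&FC^{-1}\\0&I\end{pmatrix}
 \begin{pmatrix}E-FC^{-1}G&0\\G&C\end{pmatrix}.
\]
This is verified blockwise: the top-left block is $E-FC^{-1}G+FC^{-1}G=E$, and the top-right block is $FC^{-1}C=F$. The first factor has determinant $1$. The second is block lower triangular, so its determinant is $\det(E-FC^{-1}G)\det C$.

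For the rank-one update, we would first treat invertible $C$. Then $\det(C+uv^{\T})=\det C\,\det(I+C^{-1}uv^{\T})$. Sylvester's identity with $U=C^{-1}u$ (size $p\times1$) and $V=v^{\T}$ turns the second factor into the scalar $1+v^{\T}C^{-1}u$. This gives the stated invertible form. Substituting $C^{-1}=(\det C)^{-1}\adj C$ yields $\det C+v^{\T}(\adj C)u$ for invertible $C$.

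The only step needing care is extending the adjugate form to singular $C$. Both sides are polynomials in the entries of $C$, $u$ and $v$: the entries of $\adj C$ are cofactors, hence polynomial. The two sides agree on the set of invertible $C$, which is the complement of the zero set of the nonzero polynomial $\det C$ and is therefore Zariski dense (equivalently, dense in the Euclidean topology over $\mathbb R$ or $\mathbb C$). Hence the polynomials coincide identically.

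Alternatively, one can apply the identity to $C+tI$, which is invertible for all but finitely many $t$, and let $t\to0$ using continuity of both sides. A third route avoids density altogether: expand $\det(C+uv^{\T})$ by multilinearity in the columns. Terms using two or more columns of $uv^{\T}$ vanish, since those columns are proportional. What remains is $\det C$ plus the sum over $j$ of $v_j$ times the determinant of $C$ with column $j$ replaced by $u$. By cofactor expansion that sum is $v^{\T}(\adj C)u$.
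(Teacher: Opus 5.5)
Your proof is correct and is essentially the paper's argument. The paper proves the Schur identity by left-multiplying by $\left(\begin{smallmatrix}I&-FC^{-1}\\0&I\end{smallmatrix}\right)$, which is the inverse of your first factor. It then obtains Sylvester's identity from the two Schur complements of $\left(\begin{smallmatrix}I_p&U\\-V&I_q\end{smallmatrix}\right)$, which is exactly what your two block factorizations of $W$ make explicit, and it gets the rank-one formula from $q=1$ plus polynomial extension. Your multilinear column expansion is a valid density-free alternative for singular $C$. The phrase ``identity diagonal blocks'' is a harmless misstatement: the fact you actually use is that a block-triangular determinant is the product of the diagonal-block determinants.
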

\begin{proof}
Multiplication on the left by
$\left(\begin{smallmatrix}I&-FC^{-1}\\0&I\end{smallmatrix}\right)$
proves the Schur identity. Applying it to
$\left(\begin{smallmatrix}I_p&U\\-V&I_q\end{smallmatrix}\right)$,
and then eliminating its upper-left block instead, gives
Sylvester's identity. The rank-one formula for invertible $C$ follows by taking $q=1$;
polynomial extension gives the same formula for singular
$C$ as well.
\end{proof}

\begin{lemma}[Hadamard's determinant inequality, \cite{hornjohnson2013}]\label{lem:hadamard}
Let $C$ be a real $q\times q$ matrix with row vectors $r_1,\ldots,r_q$.
Then
\[
 |\det C|\le\prod_{i=1}^q\|r_i\|.
\]
If every row is nonzero, equality holds if and only if the rows are
pairwise orthogonal.
\end{lemma}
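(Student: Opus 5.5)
The plan is to prove the inequality $|\det C|\le\prod_i\|r_i\|$ by Gram--Schmidt orthogonalization of the rows, that is, through a QR-type factorization.

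\emph{Trivial case.} If the rows are linearly dependent, then $\det C=0$ and the inequality is immediate. For the equality statement in this case, suppose every row is nonzero. Then the right-hand side is positive, so equality fails. This is consistent with the claim, because nonzero pairwise orthogonal rows are linearly independent.

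\emph{Independent rows.} Assume the rows are linearly independent and apply Gram--Schmidt without normalization. Set $s_1=r_1$, and for each $i$ let $s_i$ be $r_i$ minus its orthogonal projection onto $\operatorname{span}(r_1,\ldots,r_{i-1})$. Then $C=LS$, where $L$ is lower unitriangular and $S$ has the pairwise orthogonal rows $s_1,\ldots,s_q$. Hence
\[
 \det C=\det S,\qquad (\det C)^2=\det(SS^{\T})=\prod_{i=1}^q\|s_i\|^2,
\]
since $SS^{\T}=\diag(\|s_i\|^2)$. By Pythagoras, $\|r_i\|^2=\|s_i\|^2+\|r_i-s_i\|^2\ge\|s_i\|^2$. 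Taking square roots and multiplying over $i$ gives the inequality.

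\emph{Equality case.} With independent rows, each $\|s_i\|>0$, so equality forces $\|s_i\|=\|r_i\|$ for every $i$. This means $r_i-s_i=0$, so $r_i$ is orthogonal to $r_1,\ldots,r_{i-1}$ for every $i$, and the rows are pairwise orthogonal. Conversely, if the rows are pairwise orthogonal, then $CC^{\T}=\diag(\|r_i\|^2)$, and taking determinants gives $(\det C)^2=\prod_i\|r_i\|^2$, which is equality. The only mild care point is the degenerate, dependent case described above, where the hypothesis that every row is nonzero is needed. There is no real obstacle.
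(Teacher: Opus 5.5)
Your proof is correct and complete. The unnormalized Gram--Schmidt factorization $C=LS$, with $L$ lower unitriangular, gives $(\det C)^2=\prod_i\|s_i\|^2$, and Pythagoras gives $\|s_i\|\le\|r_i\|$. When the rows are independent, every $\|s_j\|>0$, so equality forces $r_i=s_i$, that is, $r_i\perp r_1,\ldots,r_{i-1}$ for each $i$. Your treatment of the dependent case is exactly where the nonzero-row hypothesis is needed.

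The paper does not prove this lemma; it quotes it from Horn and Johnson, so there is no in-paper argument to match. Your QR-type route is one of the two standard proofs. The other works with the positive semidefinite Gram matrix $A=CC^{\T}$. It rescales to $B=D^{-1/2}AD^{-1/2}$ with $D=\diag(\|r_i\|^2)$, which is invertible because the rows are nonzero. The arithmetic--geometric mean inequality on the eigenvalues of $B$, whose trace is $q$, then gives $\det B\le1$, with equality exactly when $B=I$, i.e.\ when the rows are orthogonal.

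Your version is more elementary and makes the equality case transparent row by row. The eigenvalue version proves the stronger statement $\det A\le\prod_iA_{ii}$ for every positive semidefinite $A$. It is also the same trace-versus-determinant mechanism the paper uses in \cref{lem:bipartite-sink} and \cref{thm:bipartite-integral}. The lemma's only use, the row bound for $\mathcal B_\beta$ in \cref{prop:tour-upper}, needs just the inequality, so either proof serves.
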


The following inverse estimates control determinant changes under arc reversals.

\begin{lemma}\label{lem:tour-cayley}
Let $C=\Delta-\Omega$, where $\Delta$ is a positive diagonal matrix with
entries at least $\mu>0$, and $\Omega$ is real skew-symmetric. Then $C$ is
invertible and $\det C>0$. For real vectors $p,q$ with disjoint supports,
\[
 |p^{\mathsf T}C^{-1}q|
 \le\frac12
 \sqrt{(p^{\mathsf T}\Delta^{-1}p)(q^{\mathsf T}\Delta^{-1}q)}
 \le\frac{\|p\|\|q\|}{2\mu}.
\]
For every real vector $z$,
\[
 0\le z^{\mathsf T}C^{-1}z
 \le z^{\mathsf T}\Delta^{-1}z
 \le\frac{\|z\|^2}{\mu}.
\]
\end{lemma}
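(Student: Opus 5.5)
Throughout, let $\langle x,y\rangle=x^{\T}y$. The plan is to work with the symmetric part of $C$ and of its inverse. For any real $x$ we have $x^{\T}Cx=x^{\T}\Delta x\ge\mu\|x\|^2$, because $x^{\T}\Omega x=0$. So $Cx=0$ forces $x=0$, and $C$ is invertible. For positivity of the determinant, I will use the path $C_t=\Delta-t\Omega$ with $t\in[0,1]$. Each $C_t$ is invertible by the same argument, $\det C_0=\det\Delta>0$, and $\det C_t$ is continuous in $t$ and never zero, so $\det C>0$. (Alternatively, the real eigenvalues of $C$ are positive and the nonreal ones come in conjugate pairs.)

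For the quadratic form, set $y=C^{-1}z$. Then
\[
z^{\T}C^{-1}z=(Cy)^{\T}y=y^{\T}\Delta y\ge0.
\]
For the upper bound I will write $z=Cy=\Delta y-\Omega y$ and use skew-symmetry to expand
\[
z^{\T}\Delta^{-1}z=y^{\T}\Delta y+y^{\T}\Omega^{\T}\Delta^{-1}\Omega y .
\]
The cross terms are $-y^{\T}\Omega y-y^{\T}\Omega^{\T}y=0$. Hence
\[
z^{\T}\Delta^{-1}z-z^{\T}C^{-1}z=\|\Delta^{-1/2}\Omega y\|^2\ge0,
\]
and the final inequality $z^{\T}\Delta^{-1}z\le\|z\|^2/\mu$ is immediate from $\Delta\ge\mu I$.

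The bilinear bound is the main obstacle, since it needs the factor $\tfrac12$ and the disjoint supports. I will normalize: let $K=\Delta^{-1/2}\Omega\Delta^{-1/2}$, which is skew, so that $C=\Delta^{1/2}(I-K)\Delta^{1/2}$. With $\tilde p=\Delta^{-1/2}p$ and $\tilde q=\Delta^{-1/2}q$ this gives
\[
p^{\T}C^{-1}q=\tilde p^{\T}(I-K)^{-1}\tilde q .
\]
Because $\Delta$ is diagonal, $\tilde p$ and $\tilde q$ still have disjoint supports, so $\langle\tilde p,\tilde q\rangle=0$. The target becomes $|\tilde p^{\T}(I-K)^{-1}\tilde q|\le\tfrac12\|\tilde p\|\|\tilde q\|$.

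The key step is to use the Cayley transform $Q=(I+K)(I-K)^{-1}$. Since $K$ is skew, $Q$ is orthogonal, and
\[
(I-K)^{-1}=\tfrac12(I+Q).
\]
Then
\[
\tilde p^{\T}(I-K)^{-1}\tilde q=\tfrac12\langle\tilde p,\tilde q\rangle+\tfrac12\,\tilde p^{\T}Q\tilde q=\tfrac12\,\tilde p^{\T}Q\tilde q .
\]
The orthogonality of $\tilde p$ and $\tilde q$ kills the first term, and Cauchy--Schwarz with $\|Q\tilde q\|=\|\tilde q\|$ gives the bound. Undoing the scaling, $\|\tilde p\|^2=p^{\T}\Delta^{-1}p$ and $\|\tilde q\|^2=q^{\T}\Delta^{-1}q$, which yields the middle expression of the statement. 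The last inequality again follows from $\Delta^{-1}\le\mu^{-1}I$.

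The only point to verify with care is the orthogonality of the Cayley transform. Since $I\pm K$ commute, $Q^{\T}Q=(I+K)^{-1}(I-K)(I+K)(I-K)^{-1}=I$, and $I\pm K$ are invertible by the first paragraph applied with $\Delta=I$.
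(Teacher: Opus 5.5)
Your proof is correct and follows the paper's route: the homotopy $\Delta-t\Omega$ gives $\det C>0$, and the Cayley transform identity $C^{-1}=\tfrac12\Delta^{-1/2}(I+W)\Delta^{-1/2}$, combined with $p^{\mathsf T}\Delta^{-1}q=0$, gives the disjoint-support bound. The only difference is in the quadratic-form bound, which is minor. You substitute $z=Cy$ and obtain the exact gap $z^{\mathsf T}\Delta^{-1}z-z^{\mathsf T}C^{-1}z=\|\Delta^{-1/2}\Omega C^{-1}z\|^2$, while the paper reuses the Cayley transform and the estimate $|y^{\mathsf T}Wy|\le\|y\|^2$; both arguments are valid.
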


\begin{proof}
For nonzero real $y$ and $0\le t\le1$,
\[
 y^{\mathsf T}(\Delta-t\Omega)y=y^{\mathsf T}\Delta y>0.
\]
Hence $\Delta-t\Omega$ is nonsingular. Continuity of its determinant
from $t=0$ gives $\det C>0$.

Let
\[
 H=\Delta^{-1/2}\Omega\Delta^{-1/2},\qquad
 W=(I+H)(I-H)^{-1}.
\]
Since $H$ is skew-symmetric, its Cayley transform $W$ is orthogonal:
$W^{\mathsf T}=(I+H)^{-1}(I-H)$, and the commuting factors $I-H,I+H$
give $W^{\mathsf T}W=I$. Moreover, $I+W=2(I-H)^{-1}$, so
\[
 C^{-1}
 =\frac12\Delta^{-1/2}(I+W)\Delta^{-1/2}.
\]
If $p,q$ have disjoint supports, then $p^{\mathsf T}\Delta^{-1}q=0$.
Using this inverse formula, orthogonality and Cauchy--Schwarz proves
the disjoint-support estimate. For $y=\Delta^{-1/2}z$,
\[
 0\le z^{\mathsf T}C^{-1}z
 =\tfrac12(\|y\|^2+y^{\mathsf T}Wy)
 \le\|y\|^2=z^{\mathsf T}\Delta^{-1}z,
\]
since $|y^{\mathsf T}Wy|\le\|y\|^2$. Finally, the lower bound on the
diagonal entries of $\Delta$ gives the remaining estimates.
\end{proof}

\subsection{Tournament bounds and alternating tree polynomials}
For odd $n=2d+1$, the swirl tournament $\mathrm{SW}_n$ has vertex set
$\mathbb Z/n\mathbb Z$ and arcs $i\to i+j$ for $1\le j\le d$.
The following result of Bandekar et al.\ \cite[Theorem~1.6]{bandekar2025}
provides the explicit comparison value used in \cref{sec:tournaments}.
\begin{lemma}
\label{lem:tournament-minima}
For every regular tournament $T$ of odd order $n\ge1$,
\[
 a(T)\ge\frac{((n+1)/2)^n+((n-1)/2)^n}{n^2},
\]
with equality precisely when $T\cong\mathrm{SW}_n$.
\end{lemma}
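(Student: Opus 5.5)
The plan is to reduce everything to the spectrum of the skew adjacency matrix via \cref{lem:regular-determinant}, and then to run a convexity/majorization argument in which the swirl is the extreme point. Throughout, $n=2d+1$ and $T$ is regular.

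\textbf{Step 1: spectral form of $a(T)$.} Since $A_T\one=A_T^{\T}\one=d\one$, we have $M_T\one=0$. As $M_T$ is real skew-symmetric, its spectrum is $\{0\}\cup\{\pm i\mu_1,\dots,\pm i\mu_d\}$ with $\mu_j\ge0$. Hence
\[
 \Phi_{M_T}(n)=n\prod_{j=1}^d\bigl(n^2+\mu_j^2\bigr),
 \qquad
 a(T)=\frac{1}{n\,2^{n-1}}\prod_{j=1}^d\bigl(n^2+\mu_j^2\bigr).
\]
All off-diagonal entries of $M_T$ are $\pm1$, so $\|M_T\|_F^2=n(n-1)$. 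This gives the linear constraint
\[
 \sum_{j=1}^d\mu_j^2=\tfrac12 n(n-1),
\]
and the problem becomes minimizing the Schur-concave function $x\mapsto\sum_j\log(n^2+x_j)$ over the vectors $x=(\mu_j^2)$ realized by regular tournaments.

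\textbf{Step 2: evaluate the swirl.} For $\mathrm{SW}_n$, the matrix $2L+J=nI-M$ is circulant. At an $n$th root of unity $z\ne1$ its eigenvalue is $2\bigl(d-\sum_{j=1}^d z^j\bigr)$, and at $z=1$ it is $n$. One has
\[
 d-\sum_{j=1}^d z^j=\frac{d-(d+1)z+z^{d+1}}{1-z}.
\]
Taking the product over $z^n=1$, $z\ne1$, is a resultant computation: combine $\prod(1-z)=n$ with the product of $z^{d+1}-(d+1)z+d$ over all roots of unity, after dividing out the double root at $z=1$. This should produce $2^{n-1}\bigl(((n+1)/2)^n+((n-1)/2)^n\bigr)$, and so exactly the stated bound. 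I expect this to be routine.

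\textbf{Step 3 (main obstacle): majorization.} I would show that the vector of squared moduli for $\mathrm{SW}_n$, sorted decreasingly, majorizes the corresponding vector of every regular $T$. Schur-concavity then gives $a(T)\ge a(\mathrm{SW}_n)$. The plan is to apply Ky Fan's principle to the Hermitian matrix $iM_T$: the sum of its $k$ largest eigenvalues equals
\[
 \max\ \operatorname{tr}\bigl(U^*(iM_T)U\bigr)
\]
over $n\times k$ isometries $U$. I would then bound this maximum uniformly over skew $\pm1$ matrices with zero row sums by an explicit Fourier-type extremal value, equal to the partial sums $\sum_{j\le k}\cot\bigl(\pi(2j-1)/(2n)\bigr)$ attained by the swirl, whose eigenvalues are the cotangents $\cot\bigl(\pi(2j-1)/(2n)\bigr)$ up to reordering. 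What I would try first is a rearrangement argument: writing $\operatorname{tr}\bigl(U^*(iM)U\bigr)=\sum_{u\ne v}M_{uv}\,\mathrm{Im}(\cdot)$, choose the signs $M_{uv}$ optimally given the Gram data of the rows of $U$. Then show, using the regularity constraint $M\one=0$, that the optimum is a cyclic (swirl-like) sign pattern.

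\textbf{Step 4: equality.} If $a(T)=a(\mathrm{SW}_n)$, strict Schur-concavity forces the squared spectra to coincide. In particular the top eigenvalue $\mu_1$ must equal the swirl's, and equality in the Ky Fan rearrangement should force $T$ to be a cyclic ordering with all arcs pointing forward, that is $T\cong\mathrm{SW}_n$. If the general majorization of Step 3 proves intractable, the fallback is to cite \cite[Theorem~1.6]{bandekar2025} directly, as the paper does.
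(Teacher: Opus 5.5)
\textbf{Gap in Step 3.} Steps 1 and 2 are fine. The swirl's skew spectrum is indeed $0,\pm i\cot(\pi(2j-1)/(2n))$, and its value $\Phi_M(n)=((n+1)^n+(n-1)^n)/2$ gives exactly the stated bound. The gap is Step 3: the inequality you propose to prove is false. Ky Fan's principle applied to $iM_T$ controls the partial sums $\mu_1+\cdots+\mu_k$ of the moduli, not of their squares. So your plan amounts to showing $\sum_{j\le k}\mu_j(T)\le\sum_{j\le k}\cot(\pi(2j-1)/(2n))$ for every regular $T$.

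At $k=d$ this says the swirl maximizes the skew energy $2\sum_j\mu_j$. But $\sum_j\mu_j^2=n(n-1)/2$ is fixed. The squared majorization you are aiming for would therefore force, by Schur-concavity of $x\mapsto\sum_j\sqrt{x_j}$, that the swirl \emph{minimizes} $\sum_j\mu_j$. The two claims are incompatible unless all regular tournaments are cospectral. The Ky Fan claim is in fact false: for the Paley tournament of order $7$ every $\mu_j=\sqrt7$, so $\sum_j\mu_j=3\sqrt7\approx7.94$, whereas $\cot(\pi/14)+\cot(3\pi/14)+\cot(5\pi/14)\approx6.12$.

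Dropping the level $k=d$ does not help. That level is automatic for the squared vectors, so the transfer through the increasing convex map $t\mapsto t^2$ would only need $k\le d-1$. But for the Paley tournament of order $11$, $4\sqrt{11}\approx13.27$ already exceeds the swirl's $\sum_{j\le 4}\cot(\pi(2j-1)/22)\approx10.94$.

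\textbf{What would actually be needed.} Your reduction requires majorization of the squared vectors, that is, bounds on partial sums of the largest eigenvalues of $M_TM_T^{\T}$. By Ky Fan this means maximizing $\operatorname{tr}(U^*M_TM_T^{\T}U)$, which is quadratic in the signs $M_{uv}$. Choosing the signs optimally for fixed Gram data of $U$ then no longer linearizes the problem. Your rearrangement idea works only for $k=1$, where $\mu_1=\max_{\|a\|=\|b\|=1}\sum_{u<v}M_{uv}(a_ub_v-a_vb_u)$ and the optimal signs come from the angular order of the points $(a_u,b_u)$.

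Majorization of the full squared spectrum is also strictly stronger than the lemma, and nothing in the proposal establishes it. Step 4 inherits this gap. It also relies on an unproved rigidity claim, that a regular tournament with $\mu_1=\cot(\pi/(2n))$ must be $\mathrm{SW}_n$.

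The paper gives no proof of this lemma. It imports it from \cite[Theorem~1.6]{bandekar2025}, so your fallback is the only part of the proposal that matches what the paper does.
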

In the total-count normalization this gives
\[
 \tau(\mathrm{SW}_n)=\frac{((n+1)/2)^n+((n-1)/2)^n}{n}.
\]

The following upper bounds are due to Bandekar et al.~\cite[Theorems~1.9 and~1.11]{bandekar2025}.
\begin{lemma}\label{lem:bandekar-upper}
Let $D$ be a loopless simple digraph with $N\ge2$ vertices, $e$ arcs
and outdegrees $d_1,\ldots,d_N$. Then
\[
 \tau(D)\le
 \left(\frac{\sum_{i=1}^N d_i^2+e}{N-1}\right)^{(N-1)/2}.
\]
In particular, for a regular tournament $T$ of odd order $n\ge3$,
\[
 \tau(T)\le U_n:=\left(\frac{n(n+1)}4\right)^{(n-1)/2},
 \qquad a(T)\le\frac{U_n}{n}.
\]
\end{lemma}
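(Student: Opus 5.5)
The plan is to realize $\tau(D)$ as the determinant of an $(N-1)\times(N-1)$ matrix and then bound that determinant by its Frobenius norm through AM--GM on singular values.

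\emph{Step 1: $\tau(D)$ as a compressed determinant.} Work in an orthonormal basis of $\mathbb R^N$ whose first vector is $N^{-1/2}\one$, exactly as in the proof of \cref{lem:tour-shift}. Since $L_D\one=0$, the first column of $L_D$ in this basis vanishes, and the proof of that lemma identifies $\tau(D)=\det C$. Here $C$ is the lower-right block, namely the compression $(PL_DP)|_{\mathcal H}$ with $\mathcal H=\one^\perp$ and $P$ the orthogonal projection onto $\mathcal H$. Alternatively, take $s=1$ and let $\alpha\to0$ after dividing by $N\alpha$.

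\emph{Step 2: determinant versus Frobenius norm.} Let $\sigma_1,\dots,\sigma_{N-1}$ be the singular values of $C$. Then $|\det C|=\prod\sigma_i$ and $\sum\sigma_i^2=\|C\|_F^2$, so AM--GM gives
\[
 \tau(D)=|\det C|\le\Bigl(\frac{\|C\|_F^2}{N-1}\Bigr)^{(N-1)/2}.
\]
Next, $\|C\|_F=\|PL_DP\|_F\le\|L_DP\|_F$, because an orthogonal projection does not increase Frobenius norm. Moreover $L_DP=L_D$, since $L_D\one=0$. Hence $\|C\|_F^2\le\|L_D\|_F^2$.

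\emph{Step 3: computing $\|L_D\|_F^2$.} Because $D$ is loopless and simple, row $i$ of $L_D$ has diagonal entry $d_i$ and exactly $d_i$ off-diagonal entries equal to $-1$. Its squared norm is therefore $d_i^2+d_i$. Summing over $i$ gives $\sum d_i^2+\sum d_i=\sum d_i^2+e$, which proves the general bound.

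\emph{Step 4: regular tournaments.} For a regular tournament, $N=n$, $d_i=(n-1)/2$ and $e=n(n-1)/2$. Then
\[
 \sum d_i^2+e=\frac{n(n-1)^2}{4}+\frac{n(n-1)}{2}=\frac{n(n-1)(n+1)}{4}.
\]
Dividing by $n-1$ gives $n(n+1)/4$, so $\tau(T)\le U_n$. Finally, $a(T)=\tau(T)/n$ by \cref{lem:root-independence}.

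The only step needing care is Step 1: one must make sure that the block $C$ in \cref{lem:tour-shift} really is the compression and that no contribution from $z$ enters its determinant. This holds because the first column of $L_D$ in the chosen basis vanishes, so the matrix is block upper triangular. The remaining steps are routine.
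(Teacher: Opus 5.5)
Your proof is correct and complete. The pieces all hold: $\tau(D)=\det C$ for the compression $C=(PL_DP)|_{\one^\perp}$, AM--GM on the singular values of $C$, the bound $\|C\|_F=\|PL_D\|_F\le\|L_D\|_F$ (using $L_DP=L_D$), and the row count $d_i^2+d_i$. The paper gives no proof of this lemma; it imports the bound from Bandekar et al. So your argument is a self-contained replacement rather than a variant of an in-paper proof.

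The closest argument in the paper is \cref{prop:tour-upper}. It applies Hadamard's inequality (\cref{lem:hadamard}) row by row to $2L_T+\beta J$, combined with \cref{lem:tour-shift}. That route also yields the general bound. Each row of $L_D+\alpha J$ has squared norm $d_i^2+d_i+N\alpha^2$, since the rows of $L_D$ are orthogonal to $\one$. Hadamard followed by AM--GM over the rows then gives $N\alpha\,\tau(D)\le\bigl((S+N^2\alpha^2)/N\bigr)^{N/2}$, where $S=\sum_i d_i^2+e$. The choice $N^2\alpha^2=S/(N-1)$ gives exactly $(S/(N-1))^{(N-1)/2}$. For a regular tournament this choice is $\alpha=\beta/2$, with the paper's $\beta=\sqrt{(n+1)/n}$.

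The two routes differ in what they retain. Hadamard keeps one factor per vertex before averaging, and this produces the degree penalty \eqref{eq:tour-penalty-bound} needed in \cref{lem:tour-minimum}. Your route passes at once to the single aggregate $\|L_D\|_F^2$. In exchange, your argument is shorter and makes the equality case transparent. Equality forces $\one^{\T}L_D=0$, so $D$ is Eulerian, and it forces all singular values of $C$ to be equal. For a regular tournament, $4L_T^{\T}L_T=n(nI-J)+M_TM_T^{\T}$, so this means $M_TM_T^{\T}=nI-J$, i.e.\ double regularity. That recovers the equality case of Bandekar et al.
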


For an Eulerian orientation $D$ of a multigraph $G$, let
$i_D(T,r)$ be the number of edges of an undirected spanning tree $T$
whose direction must be reversed to orient $T$ towards $r$, and let
\[
 P_{D,r}(t)=\sum_{T\text{ spanning tree of }G}t^{i_D(T,r)}.
\]
Replacing each nonloop arc by a forward copy of weight $1$ and a reverse
copy of weight $t$ gives, by the weighted matrix-tree theorem,
\[
 P_{D,r}(t)=\det\!\left((L_D+tL_D^{\T})^r\right).
\]
Eulerian balance is needed here to identify the reverse-arc Laplacian
with $L_D^{\T}$. Parallel copies remain distinguished, and loops do not
enter the spanning trees or the weighted Laplacian.

\begin{lemma}[Murasugi--Stoimenow, \cite{murasugi2003}]\label{lem:ms-polynomial}
Let $D_0$ be an alternating orientation of a plane Eulerian
multigraph. For every vertex $r$,
\[
 P_{D_0,r}(-1)\in\{0,1\}.
\]
More precisely, its value is congruent modulo two to $P_{D_0,r}(1)$,
the number of spanning trees of the underlying graph.
\end{lemma}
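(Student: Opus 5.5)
The plan is first to translate $P_{D_0,r}(-1)$ into a skew determinant and then to bound a Pfaffian. Since $D_0$ is Eulerian, $\diag(A_{D_0}\one)=\diag(A_{D_0}^{\T}\one)$, so $L_{D_0}-L_{D_0}^{\T}=A_{D_0}^{\T}-A_{D_0}=-M_{D_0}$. The displayed weighted matrix-tree identity then gives
\[
 P_{D_0,r}(-1)=\det\bigl(-M_{D_0}^r\bigr)=(-1)^{n-1}\det M_{D_0}^r ,
\]
where $n=|V(G)|$. The matrix $M_{D_0}^r$ is a real skew-symmetric integer matrix of order $n-1$. If $n-1$ is odd, its determinant vanishes. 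If $n-1$ is even, it equals $\Pf(M_{D_0}^r)^2$ and the sign is $+1$. So $P_{D_0,r}(-1)=\Pf(M^r_{D_0})^2\ge0$ in all cases, with the convention $\Pf=0$ in odd order.

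The parity claim holds for every Eulerian orientation, not only alternating ones. Modulo $2$, each off-diagonal entry of $M_{D_0}$ agrees with the corresponding entry of $L_G$, since both are $\pm$ the edge multiplicity. The diagonal of $M_{D_0}$ is $0$. The diagonal of $L_G$ is the number of incident nonloop edges, which is the even degree minus twice the number of loops, hence also even. Thus $M_{D_0}^r\equiv L_G^r\pmod 2$, and $P_{D_0,r}(-1)\equiv\det L_G^r=P_{D_0,r}(1)\pmod 2$ by \cref{lem:matrix-tree}. Consequently, once we know $P_{D_0,r}(-1)\in\{0,1\}$, the finer statement follows: its value is $1$ exactly when $G$ has an odd number of spanning trees.

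The real content is therefore $|\Pf(M_{D_0}^r)|\le1$, and this is where alternation must enter. I would argue by induction on the number of edges, using reductions that preserve the alternating property and change $\Pf(M^r)$ at most by a sign. There are three local moves.
\begin{enumerate}[label=(\roman*)]
\item Delete a loop. This does not change $M$.
\item Delete two parallel edges that bound a digon face. In an alternating orientation they are oppositely directed, so their contributions to $M$ cancel. Alternation persists at both endpoints, because the removed arcs were consecutive and of opposite type.
\item Suppress a degree-two vertex $v\ne r$ with arcs $u\to v\to w$. The row of $v$ in $M^r$ has entries $-1$ at $u$ and $+1$ at $w$. A congruence $X\mapsto PXP^{\T}$ with unimodular $P$ adds column and row $w$ to those of $u$ and then eliminates $v$. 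This should reduce $\Pf(M^r)$ to $\pm\Pf$ of the matrix of the graph in which $u\to v\to w$ is replaced by the single arc $u\to w$, which is again alternating. Cases where $u$ or $w$ equals $r$ need separate bookkeeping.
\end{enumerate}

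The main obstacle is that a plane alternating graph need not contain a loop, a digon face or a degree-two vertex away from $r$; a simple $4$-regular plane graph is the typical case. For these I would first try a ``smoothing'' at a vertex $v\ne r$ of degree at least $4$. In the cyclic order at $v$, take an incoming arc $e$ and the next outgoing arc $f$, and split $v$ into two vertices so that $e,f$ go to a new degree-two vertex. Alternation and planarity are preserved. I then want a Pfaffian expansion along the new row, in the spirit of Kasteleyn's face condition, showing that the split changes $\Pf$ only by sign or by adding a Pfaffian of a reduced matrix that vanishes.

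If this local algebra fails, the fallback is to read $\Pf(M^r_{D_0})$ as a signed sum over perfect matchings of $G-r$. The black/white face colouring then gives each even alternating cycle in a superposition of two matchings a prescribed sign, producing a sign-reversing involution that leaves at most one uncancelled matching. This is essentially the Temperley-type structure underlying Murasugi and Stoimenow's knot-theoretic proof, and I expect the bulk of the work to lie there.
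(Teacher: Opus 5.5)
Your reduction is sound. $P_{D_0,r}(-1)=\det(-M_{D_0}^r)$ is $0$ in odd order and $\Pf(M_{D_0}^r)^2\ge0$ in even order. The congruence $M_{D_0}\equiv L_G\pmod 2$ gives the parity statement for every Eulerian orientation; the paper uses the same parity argument in proving \cref{thm:planar-main}. The gap is that the one assertion needing alternation and planarity, $|\Pf(M_{D_0}^r)|\le1$, is not proved, and the induction you sketch does not close.

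Move (iii) is false as stated. Replacing $u\to v\to w$ by $u\to w$ lowers the order of the reduced matrix by one, so it cannot preserve $|\Pf|$. For the directed $4$-cycle $a\to b\to c\to d\to a$ with $r=a$ one has $\det M^r=0$, whereas suppressing $c$ gives a directed triangle with $\det M^r=1$. Your congruence (add row and column $w$ to those of $u$, then expand along row $v$) actually deletes both $v$ and $w$. It yields the graph with $u,v,w$ identified, that is, the contraction of both edges at $v$. This corrected move is valid and preserves alternation by \cref{alt-lem-contraction}.

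Move (i) is unnecessary, since loops do not enter $M$. Moreover, deleting a loop whose two occurrences are not adjacent in the rotation destroys alternation.

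Above all, the residual case, with no digon face and no degree-two vertex other than $r$, is left open. Splitting $v$ into $v',v''$ again changes the order by one. The most you can get is a two-term identity $\Pf M_{D_0}^r=\pm\Pf N_{\widehat{v'}}\pm\Pf N_{\widehat{v''}}$, where $N$ is the reduced matrix of the split graph and $N_{\widehat x}$ deletes row and column $x$. Bounding such a sum by one is the original problem again, and the matching involution is only described, not constructed.

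The paper does not prove this lemma either; it imports it from \cite[Theorem~3(7)]{murasugi2003}, whose argument is knot-theoretic. That is the idea your proposal is missing. The plane dual $\Gamma$ of $G$ is bipartite, hence the Seifert graph of a special alternating link $L$, and the vertices of $G$ are the faces of $\Gamma$. Put the face dual to $r$ at infinity. The canonical Seifert surface $\Sigma$ (a disk at each vertex of $\Gamma$ and a half-twisted band at each edge) retracts onto $\Gamma$, so the bounded face cycles form a basis of $H_1(\Sigma)$. In this basis one checks that the intersection form $V-V^{\T}$ equals $\pm M_{D_0}^r$. Two faces meet once in each common band, with a sign determined by the black--white colouring, which is exactly how the alternating orientation is defined. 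The intersection form of a connected compact oriented surface with $b\ge1$ boundary components has a radical of rank $b-1$ and is unimodular modulo it. Hence $\det M_{D_0}^r$ is $1$ when $L$ is a knot and $0$ otherwise. Equivalently, $P_{D_0,r}(t)$ agrees with $\Delta_L(-t)$ up to a unit, the lemma amounts to $\Delta_L(1)\in\{0,\pm1\}$, and $P_{D_0,r}(1)=\det L$ is odd exactly for knots. For example, the directed triangle corresponds to the trefoil, with $P_{D_0,r}(t)=1+t+t^2$. Without this identification, or a completed combinatorial substitute, your argument establishes only $0\le P_{D_0,r}(-1)\equiv P_{D_0,r}(1)\pmod 2$.
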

This is \cite[Theorem~3(7)]{murasugi2003}, which applies to plane Eulerian multigraphs, including loops, parallel edges and cut vertices.

\section{Alternating orientations}\label{sec:alternating}
We prove \cref{thm:planar-main} by expressing $2^{|V(G)|-1}a_r(D)$
as a sum of skew-adjacency cofactors of graphs obtained by contracting
spanning forests. Each summand is a nonnegative integer.
For an alternating orientation, the contracted orientation remains
alternating, and the Murasugi--Stoimenow lemma makes each such cofactor
equal to $0$ or $1$. Reorienting the graph preserves the parity of every
cofactor. The alternating orientation therefore minimizes each summand,
and hence the arborescence count.

\subsection{A forest expansion}
A spanning forest $F$ of $G$ contains every vertex and has no cycles;
it may be disconnected, and isolated vertices are allowed. Let $c(F)$ denote its number of components and $r_F$ the
component containing $r$. Contract every edge of $F$, retaining all
remaining edge copies and any loops created by contraction. The induced
orientation $D/F$ of $G/F$ is Eulerian: summing the vertex balance
equations over a component cancels its internal arcs and leaves equal
incoming and outgoing boundary counts.

For an even-order skew-symmetric matrix $S$, its Pfaffian satisfies
$\det S=\Pf(S)^2$; see \cite{ledermann1993}. Since the Pfaffian is a
polynomial with integer coefficients in the entries of $S$, this
determinant is a nonnegative integer square when $S$ is integral.
We take the determinant and Pfaffian of the empty matrix to be $1$.

\begin{lemma}[Forest expansion]\label{alt-lem-expansion}
Let $D$ be an Eulerian orientation of a multigraph $G$ of
order $n\ge1$. Then, for every vertex $r$,
\begin{equation}\label{alt-eq-expansion}
 2^{n-1}a_r(D)=
 \sum_{\substack{F\text{ spanning forest of }G\\c(F)\text{ odd}}}
 \det M_{D/F}^{r_F}
 =\sum_{\substack{F\text{ spanning forest of }G\\c(F)\text{ odd}}}
 \Pf\bigl(M_{D/F}^{r_F}\bigr)^2.
\end{equation}
\end{lemma}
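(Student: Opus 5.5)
The plan is to start from the Eulerian identity $L_D=\tfrac12(L_G-M_D)$ and the matrix-tree theorem (\cref{lem:matrix-tree}). Together they give
\[
 2^{n-1}a_r(D)=\det\bigl(L_G^r-M_D^r\bigr).
\]
Write $S=-M_D^r$, a skew-symmetric integer matrix of order $n-1$. Let $B$ be the reduced signed incidence matrix of $G$: rows are indexed by $V(G)\setminus\{r\}$, columns by the nonloop edge copies, and each edge column is arbitrarily oriented. Then $L_G^r=BB^{\T}$, so the target is $\det(S+BB^{\T})$. Loops give zero columns and enter neither side, so they can be ignored throughout.

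The first step is to linearise in the edges. By the Schur identity (\cref{lem:sylvester}),
\[
 \det(S+BB^{\T})=\det\begin{pmatrix}S&-B\\B^{\T}&I_m\end{pmatrix}.
\]
I then expand the identity block $I_m$ multilinearly along the diagonal, so that the determinant becomes a sum over edge subsets $F$. The summand for $F$ is
\[
 \Delta_F=\det\begin{pmatrix}S&-B_F\\B_F^{\T}&0\end{pmatrix},
\]
where $B_F$ consists of the columns indexed by $F$; the edges outside $F$ have been deleted through the diagonal $1$'s. If the columns of $B_F$ are linearly dependent, then $\Delta_F=0$ because the lower block rows are dependent. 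With row $r$ deleted, the columns of a reduced incidence matrix are independent exactly when $F$ is acyclic. So only spanning forests $F$ contribute, with $|F|=k=n-c(F)$.

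The second step, which I expect to be the main obstacle, is to identify $\Delta_F$ with $\det M_{D/F}^{r_F}$. The reduced incidence matrix of a forest is totally unimodular and has full column rank, so its elementary divisors are all $1$. Hence there is a unimodular integer matrix $U$ with $UB_F=\binom{I_k}{0}$. One should be able to choose $U^{-\T}$, or equivalently a complementary basis of the dual lattice, so that its last $c(F)-1$ coordinates are the indicator vectors of the components of $F$ other than $r_F$. Concretely, these are the vectors orthogonal to all columns of $B_F$, and on the vertices outside $r$ they are exactly the sums over each non-root component.

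Conjugating the block matrix by $U\oplus I_k$ multiplies $\Delta_F$ by $\det(U)^2=1$. It turns $-B_F$ into $-\binom{I_k}{0}$, and the resulting determinant reduces, by eliminating the $k$ paired rows and columns, to the determinant of the trailing $(c(F)-1)$-block of $USU^{\T}$. This block has entries $\pm\one_{C}^{\T}M_D\one_{C'}$ over non-root components $C,C'$. These are the entries of $M_{D/F}^{r_F}$, since contracting $F$ sums the rows and columns of each component. Arcs inside a component cancel in this sum, matching the fact that they become loops, which $M$ ignores.

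The care needed here is in checking the sign conventions, namely that $S=-M$ versus $M$ and the elimination sign. Neither matters in the end: the determinant of an even-order skew matrix is invariant under negation, and the elimination sign should come out as $(-1)^k\cdot(-1)^k=1$. Odd-order skew determinants vanish, which discards exactly the forests with $c(F)$ even. Finally, $\det=\Pf^2$ gives the second equality in \eqref{alt-eq-expansion}.
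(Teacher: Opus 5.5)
Your proposal is correct and follows essentially the paper's route. Both arguments expand the determinant multilinearly over edge subsets: you linearize through the bordered Schur matrix, and the paper uses the weight polynomial $\det(N+\sum_e w_eb_eb_e^{\T})$. Both then discard non-forests by dependence of the reduced incidence vectors, and reduce each forest term to $\det\bigl(-M_{D/F}^{r_F}\bigr)$ by a unimodular congruence whose last $c(F)-1$ rows are the non-root component indicators.

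The matrix $U$ you hedge on is given explicitly in the paper as $T^{-1}$ with $T=[\,B_F\mid E_F\,]$, where $E_F$ adds auxiliary edges from $r$ to one vertex of each non-root component. In any case, the trailing rows of every admissible $U$ form a $\ZZ$-basis of the same lattice as the indicators, so the trailing determinant does not depend on the choice.
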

\begin{proof}
For each nonloop edge $e$, choose a sign for its incidence vector and
delete the coordinate of $r$. Denote the resulting vector by
$b_e\in\ZZ^{n-1}$. Then
\[
 L_G^r=\sum_e b_eb_e^{\T}.
\]
For an arbitrary $(n-1)\times(n-1)$ matrix $N$, consider
\[
 P(\mathbf w)=\det\left(N+\sum_e w_e b_eb_e^{\T}\right).
\]
We identify its coefficients by the forest structure of the selected
edges, and then specialize to $N=-M_D^r/2$ to recover the arborescence
count.

Let $E_*$ be the set of nonloop edges, with parallel copies represented
as separate elements. Expanding the determinant by its columns gives
\[
 P(\mathbf w)=\sum_{S\subseteq E_*}\gamma_S
                   \prod_{e\in S}w_e.
\]
Indeed, every determinant term involving $w_e^2$ has two columns
proportional to $b_e$, so its determinant is $0$.
More generally, $\gamma_S=0$ whenever the vectors
$(b_e)_{e\in S}$ are linearly dependent. If $S$ contains a cycle
$e_1,\ldots,e_k$, traversing the cycle gives signs
$\varepsilon_i\in\{1,-1\}$ such that
\[
 \varepsilon_1b_{e_1}+\cdots+\varepsilon_kb_{e_k}=0.
\]
This includes a cycle formed by two parallel edges. Conversely, suppose
$S$ is a forest and $\sum_{e\in S}x_eb_e=0$. A leaf vertex distinct from
$r$ has a coordinate occurring in exactly one vector, so the coefficient
of its incident edge is zero. Removing that edge and repeating proves
$x_e=0$ for every $e\in S$. Thus the contributing edge sets are forests.

We now compute the coefficient of a forest $F$ with $h$ edges and
$c=n-h$ components. In each
component $C\ne r_F$, choose a vertex $v_C$, and adjoin an auxiliary
edge joining $r$ to $v_C$. Each auxiliary edge joins the root component
to a different component of $F$, so together they form a spanning tree.
These edges are used to choose coordinates and need not belong to $G$.
Order the forest edges as $e_1,\ldots,e_h$ and the nonroot components as
$C_1,\ldots,C_{c-1}$. The reduced incidence matrix of this tree, with
suitable column signs, is
\[
 \begin{gathered}
 T=[\,B_F\mid E_F\,],\\
 B_F=[\,b_{e_1}\ \cdots\ b_{e_h}\,],\qquad
 E_F=[\,e_{v_{C_1}}\ \cdots\ e_{v_{C_{c-1}}}\,].
 \end{gathered}
\]
Here $e_v$ denotes the standard coordinate vector at $v$. A nonroot
leaf of the tree gives a row of $T$ with a single nonzero entry, equal
to $1$ or $-1$. Expansion along that row reduces $\det T$ to the reduced
incidence determinant of a smaller tree. Induction, starting with the
empty matrix, gives $\det T=\pm1$.

Let $Q=T^{-1}$, and let $U$ have rows $\one_{C_1}^{\T},\ldots,
\one_{C_{c-1}}^{\T}$, where the indicator vectors have coordinates
indexed by $V(G)\setminus\{r\}$. Every forest edge has both endpoints
in one component, whereas $v_{C_j}$ belongs to $C_j$. Consequently,
\[
 UB_F=0,\qquad UE_F=I_{c-1},\qquad
 U=[\,0\mid I_{c-1}\,]Q.
\]
The last equality follows by multiplying $UT=[\,0\mid I_{c-1}\,]$
by $Q$. It identifies the last $c-1$ rows of $Q$ with component
indicators and explains how the change of coordinates separates the
forest edges from the component sums.

Let every variable outside $F$ be zero. Since $(\det Q)^2=1$,
congruence by $Q$ leaves the determinant unchanged and yields
\[
 \det\left(N+\sum_{e\in F}w_e b_eb_e^{\T}\right)
 =\det\left(QNQ^{\T}
 +\diag(w_{e_1},\ldots,w_{e_h},0,\ldots,0)\right).
\]
The coefficient of $\prod_{e\in F}w_e$ is obtained by selecting all
first $h$ diagonal entries. It is therefore the determinant of the
lower-right $(c-1)\times(c-1)$ principal block of $QNQ^{\T}$,
with positive sign.

To identify this block with the quotient matrix, take $N=-M_D^r/2$.
The component-indicator rows sum the matrix entries over each pair
of components, so the remaining block has entries
\[
 -\frac12\sum_{u\in C}\sum_{v\in C'}(M_D)_{uv}
 =-\frac12(M_{D/F})_{CC'}.
\]
Here the double sum is the number of arcs directed from $C$ to $C'$
minus the number directed from $C'$ to $C$, with parallel arcs counted
separately. For $C=C'$, the sum is zero by skew-symmetry. Any remaining
arc within a component becomes a loop after contraction. Hence the block
is $-M_{D/F}^{r_F}/2$.

By the Eulerian identity $2L_D=L_G-M_D$ and \Cref{lem:matrix-tree},
evaluating $P(\mathbf w)$ at $w_e=1/2$ gives $a_r(D)$.
The contribution indexed by $F$ is consequently
\[
 2^{-h}\det\bigl(-M_{D/F}^{r_F}/2\bigr).
\]
For even $c$, the skew-symmetric matrix has odd order, so
$\det(-M_{D/F}^{r_F}/2)=0$. For odd $c$, the reduced order $c-1$ is even,
and $h+c-1=n-1$, so the contribution is
$2^{-(n-1)}\det M_{D/F}^{r_F}$. Summing proves the first equality in
\eqref{alt-eq-expansion}; the second is the Pfaffian identity stated
above.
\end{proof}

\subsection{Alternating skew cofactors}
Forest contraction preserves the alternating plane orientation. The Murasugi--Stoimenow evaluation then places every alternating quotient cofactor in $\{0,1\}$, providing the comparison needed in \eqref{alt-eq-expansion}.

\begin{lemma}\label{alt-lem-contraction}
If $D_0$ is an alternating plane orientation and $F$ is a forest, then
$D_0/F$ is alternating in the contracted embedding.
\end{lemma}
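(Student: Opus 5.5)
The plan is to reduce to the contraction of a single edge and then induct on the number of edges of $F$. The edges of a forest can be contracted one at a time. After contracting some of them, the remaining edges of $F$ still form a forest in the quotient, since contracting a forest edge creates no loop from another forest edge. It therefore suffices to show that if $D_0$ is alternating in a plane embedding and $e$ is a nonloop edge joining distinct vertices $u\ne v$, then $D_0/e$ is alternating in the contracted embedding. If $F$ has no edges there is nothing to prove.

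For a single edge $e=uv$, oriented say $u\to v$, I would describe the contracted embedding explicitly. Take a small neighbourhood of $e$. The new vertex $w$ has cyclic rotation obtained as follows:
\begin{itemize}[leftmargin=*]
\item start from the rotation at $u$ and cut it open at the position of $e$, leaving the linear sequence $\sigma_u$ of arc-ends following $e$ around $u$;
\item do the same at $v$ to obtain $\sigma_v$;
\item concatenate $\sigma_u$ and $\sigma_v$ into one cyclic sequence.
\end{itemize}
This is the standard description of edge contraction in rotation systems. Loops at $u$ or $v$, and parallel copies of $e$ that become loops, are carried along as their two recorded ends, consistent with the convention that a loop is recorded twice. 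Rotations at all other vertices are unchanged, so the alternating condition only needs checking at $w$.

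At $u$ the end of $e$ is outgoing. By alternation its two cyclic neighbours are incoming, so $\sigma_u$ is an alternating sequence of odd length $\deg u-1$ that begins and ends with incoming ends. At $v$ the end of $e$ is incoming, so $\sigma_v$ alternates and begins and ends with outgoing ends. The concatenation therefore alternates at both junctions: the last incoming end of $\sigma_u$ meets the first outgoing end of $\sigma_v$, and the last outgoing end of $\sigma_v$ wraps around to the first incoming end of $\sigma_u$. Hence the rotation at $w$ alternates. Two degenerate cases need a remark:
\begin{itemize}[leftmargin=*]
\item If $\deg u=2$ or $\deg v=2$, the corresponding sequence has length one, and the argument is unchanged.
\item If the only edge at $u$ is... this cannot happen, because degrees are even and $e$ is not a loop, so $\deg u\ge2$.
\end{itemize}

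Alternatively, and perhaps more cleanly, I would phrase the proof through the face coloring given in the introduction. Orienting every edge with its black face on the left characterizes alternating orientations when faces are properly 2-coloured, up to reversing all arcs. Contracting a nonloop edge merges no faces and keeps each face's boundary side data, so the same colouring is still proper in $G/e$ and still has every edge with its black face on the left. That makes the orientation alternating by the sector argument from the introduction. The main obstacle is making the rotation description of the contracted embedding rigorous in the multigraph setting, particularly the bookkeeping of loop ends and of parallel copies of $e$ that become loops. I would handle this by working entirely with the combinatorial rotation system, which the paper already uses to define alternation, rather than with topology.
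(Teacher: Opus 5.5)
Your proof is correct and follows the paper's argument essentially step for step. You cut the rotations at $u$ and $v$ open at the contracted arc, note that the remaining lists begin and end with incoming ends at $u$ and outgoing ends at $v$, join them so that alternation holds at both junctions, and induct over the forest edges, each of which remains a nonloop until it is contracted.
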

\begin{proof}
Contract a nonloop arc $u\to v$. At $u$, remove this arc from the
cyclic order of incident arcs and read the remaining list starting
immediately after its former position. The list alternates and starts
and ends with arcs entering $u$. At $v$, removing the same arc from
the cyclic order similarly leaves a list starting and ending with arcs
leaving $v$. Both lists are nonempty, since the original endpoint
degrees are positive and even. Plane contraction joins the two lists
in their inherited cyclic orders. At each join, an incoming arc is
adjacent to an outgoing arc, so the combined order alternates.
Any other arc joining $u$ and $v$ becomes a loop and retains its two
occurrences in this order, one incoming and one outgoing.

Contract the edges of $F$ successively. Each remains a nonloop until
its contraction, since otherwise $F$ would contain a cycle. The
preceding argument therefore applies at every step.
\end{proof}

\begin{lemma}\label{alt-lem-unit}
Let $D_0$ be an alternating orientation of a plane Eulerian
multigraph of odd order. Then, for every vertex $r$,
\[
 \det M_{D_0}^r\in\{0,1\}.
\]
\end{lemma}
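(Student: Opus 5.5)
Write $n=|V(G)|$, which is odd, and fix a root $r$. The plan is to identify $\det M_{D_0}^r$ with the Murasugi--Stoimenow evaluation $P_{D_0,r}(-1)$, up to a power of $2$ that turns out to be trivial, and then invoke \cref{lem:ms-polynomial}. Recall that $P_{D,r}(t)=\det\bigl((L_D+tL_D^{\T})^r\bigr)$ for every Eulerian orientation $D$. Setting $t=-1$ gives $L_D-L_D^{\T}$. Using $L_D=\tfrac12(L_G-M_D)$ and the symmetry of $L_G$, we get $L_D^{\T}=\tfrac12(L_G+M_D)$. Hence
\[
 L_D-L_D^{\T}=-M_D,
\]
because the diagonal degree terms cancel by Eulerian balance. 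Deleting row and column $r$ therefore gives
\[
 P_{D,r}(-1)=\det\bigl(-M_D^r\bigr)=(-1)^{n-1}\det M_D^r=\det M_D^r,
\]
since $n-1$ is even. No factor of $2$ appears, so this step only requires checking that the loop conventions and parallel-arc counting agree in $L_D$, $L_G$ and $M_D$. The preliminaries already ensure this: loops contribute zero to all three matrices, and parallel copies are counted with multiplicity.

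Applying this identity to $D=D_0$, \cref{lem:ms-polynomial} gives $\det M_{D_0}^r=P_{D_0,r}(-1)\in\{0,1\}$, which is the claim. As a consistency check, $M_{D_0}^r$ is an even-order integral skew-symmetric matrix. Its determinant is therefore $\Pf(M_{D_0}^r)^2$, a nonnegative integer square, which is compatible with the value lying in $\{0,1\}$. The same Pfaffian fact will later supply the parity comparison. For a general Eulerian orientation $D$, we have $\det M_D^r=P_{D,r}(-1)$, and $P_{D,r}(-1)\equiv P_{D,r}(1)\pmod 2$ because $(-1)^k\equiv1^k$. So $\det M_D^r$ has the same parity as the spanning-tree count, and hence the same parity as $\det M_{D_0}^r$.

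The main obstacle is not computational. It is ensuring that \cref{lem:ms-polynomial} applies in exactly this generality: plane Eulerian multigraphs of odd order, possibly with loops and cut vertices, with $i_D(T,r)$ measured towards the root. The paper has already asserted this applicability after the lemma. I would also verify the sign convention once, namely that $t$ weights the reversed copies, so that $t=-1$ gives $L_D-L_D^{\T}$ and not its negative. The evenness of $n-1$ makes this sign harmless in any case.
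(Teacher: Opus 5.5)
Your proof is correct and follows the paper's argument: the weighted matrix-tree identity at $t=-1$ gives $P_{D_0,r}(-1)=\det\bigl((L_{D_0}-L_{D_0}^{\T})^r\bigr)=\det(-M_{D_0}^r)=\det M_{D_0}^r$ because $n-1$ is even, and the Murasugi--Stoimenow lemma then gives the value in $\{0,1\}$. One small correction: the diagonal of $L_D-L_D^{\T}$ vanishes simply because a matrix and its transpose have the same diagonal. Eulerian balance is needed only to identify the reverse-arc Laplacian with $L_D^{\T}$ in the formula for $P_{D,r}$.
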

\begin{proof}
Apply \cref{lem:ms-polynomial} to the spanning-tree polynomial
$P_{D_0,r}$. By its weighted matrix-tree identity,
\[
 P_{D_0,r}(-1)
 =\det\!\left((L_{D_0}-L_{D_0}^{\T})^r\right)
 =\det(-M_{D_0}^r).
\]
The reduced order is even, so the final determinant is
$\det M_{D_0}^r$. The empty determinant covers the one-vertex case.
\end{proof}

\begin{proof}[Proof of \Cref{thm:planar-main}]
Fix a spanning forest $F$ with
$c(F)$ odd. The quotient $G/F$ is plane, and
\cref{alt-lem-contraction,alt-lem-unit} give
\[
 \det M_{D_0/F}^{r_F}\in\{0,1\}.
\]
Reversing an edge changes its net skew-adjacency contribution by an
even integer. The same holds after aggregation over forest components.
Consequently,
\[
 M_{D/F}^{r_F}\equiv M_{D_0/F}^{r_F}\pmod2.
\]
Their determinants have the same parity, since a determinant is an
integer polynomial in its entries. Moreover,
$\det M_{D/F}^{r_F}$ is a nonnegative integer Pfaffian square.
If the alternating cofactor is zero, nonnegativity gives the required
comparison. If it is one, the other cofactor is odd and nonnegative,
and therefore at least one. In both cases,
\[
 \det M_{D_0/F}^{r_F}\le\det M_{D/F}^{r_F}.
\]
Summing over $F$ in \eqref{alt-eq-expansion} proves the theorem.
On one vertex there is one arborescence, independently of loops,
in agreement with the empty-determinant convention.
\end{proof}

\section{Tournament maximizers}\label{sec:tournaments}

The proof of \Cref{thm:regular-main} begins with a lower bound on the
minimum outdegree of a maximizing tournament. This bound controls the
inverse matrices that occur when an arc is reversed. The condition that
no single reversal increases the count restricts the degrees to three
consecutive values. Reversing a directed path of length two then
removes the remaining imbalance.

\subsection{Degree bounds}

Let $T$ be a tournament on $\{1,\ldots,n\}$, and let $d_i=d_T^+(i)$.
Let
\[
 x_i=2d_i+1,\qquad X=\diag(x_1,\ldots,x_n).
\]
Since $A_T+A_T^{\mathsf T}=J-I$, we have
\[
 \mathcal B(T):=X-M_T=2L_T+J.
\]
By \Cref{lem:tour-shift},
\[
 \Psi(T):=\det \mathcal B(T)=n2^{n-1}\tau(T).
\]
Thus maximizing $\Psi(T)$ is equivalent to maximizing $\tau(T)$.
The numbers $x_i$ are positive odd integers and satisfy
\[
 \sum_{i=1}^n x_i
 =2\sum_{i=1}^n d_i+n=n^2.
\]
For odd $n$, regularity is equivalent to $x_i=n$ for every $i$.

\begin{proposition}\label{prop:tour-upper}
Every tournament $T$ of order $n\ge2$ satisfies
\[
 \tau(T)\le
 \frac{\sqrt{\prod_{i=1}^n(x_i^2+n)}}{2^{n-1}\sqrt{n(n+1)}}
 \le U_n:=\left(\frac{n(n+1)}4\right)^{(n-1)/2}.
\]
\end{proposition}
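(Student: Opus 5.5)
The plan is to apply Hadamard's inequality (\cref{lem:hadamard}) to a suitably shifted Laplacian whose determinant is a known multiple of $\tau(T)$, via \cref{lem:tour-shift}. The shift parameter is chosen so that the squared row norms come out as $x_i^2+n$. The second inequality is then a one-variable estimate under the constraint $\sum_i x_i=n^2$.

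\emph{Step 1: the shifted matrix.} For a real parameter $\beta>0$, set $C_\beta=2L_T+\beta J$. By \cref{lem:tour-shift} with $s=2$ and $\alpha=\beta$,
\[
 \det C_\beta=n\beta\,2^{n-1}\tau(T).
\]
Since $2L_T=X-M_T-J$, we have $C_\beta=X-M_T+(\beta-1)J$. Row $i$ therefore has diagonal entry $x_i+\beta-1$. Its off-diagonal entries are $\beta-2$ at the $d_i$ out-neighbours of $i$ and $\beta$ at the remaining $n-1-d_i$ vertices.

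\emph{Step 2: the row norms.} Substituting $d_i=(x_i-1)/2$, the squared norm of row $i$ is
\[
 (x_i+\beta-1)^2+d_i(\beta-2)^2+(n-1-d_i)\beta^2 .
\]
Expanding, the terms linear in $x_i$ cancel, and this equals $x_i^2+n\beta^2-1$. Choose $\beta=\sqrt{(n+1)/n}$, so that $n\beta^2-1=n$ and $n\beta=\sqrt{n(n+1)}$. Hadamard's inequality then gives
\[
 \sqrt{n(n+1)}\,2^{n-1}\tau(T)\le\prod_{i=1}^n\sqrt{x_i^2+n}.
\]
Here $\tau(T)\ge0$, so taking the absolute value of the determinant costs nothing. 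This is the first inequality.

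\emph{Step 3: the second inequality.} This is the only step needing care, because $\log(x^2+n)$ is not concave on the whole range $1\le x\le 2n-1$, so Jensen's inequality does not apply directly. Instead I would use the tangent line at $x=n$. Let
\[
 g(x)=\log(n(n+1))+\frac{2(x-n)}{n+1}-\log(x^2+n).
\]
Then $g(n)=0$ and
\[
 g'(x)=\frac{2}{n+1}-\frac{2x}{x^2+n}.
\]
The sign of $g'(x)$ is that of $x^2-(n+1)x+n=(x-1)(x-n)$. Hence $g$ is decreasing on $[1,n]$ and increasing on $[n,\infty)$, so $g\ge0$ for all $x\ge1$. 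Since every $x_i\ge1$, summing $g(x_i)\ge0$ and using $\sum_i(x_i-n)=0$ gives
\[
 \prod_{i=1}^n(x_i^2+n)\le(n(n+1))^n.
\]
Substituting into the first bound yields
\[
 \tau(T)\le\frac{(n(n+1))^{n/2}}{2^{n-1}(n(n+1))^{1/2}}=\left(\frac{n(n+1)}4\right)^{(n-1)/2}=U_n.
\]
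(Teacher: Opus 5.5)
Your proposal is correct and follows the paper's proof essentially step for step. It uses the same matrix $2L_T+\beta J$ with $\beta=\sqrt{(n+1)/n}$, Hadamard's inequality combined with \cref{lem:tour-shift}, and the same tangent-line comparison at $x=n$ (your $g$ is the negative of the paper's $g_n$). The only difference is cosmetic: you expand the row norms entry by entry, whereas the paper gets $4(d_i^2+d_i)+n\beta^2$ from $L_T\one=0$, which makes each row of $2L_T$ orthogonal to the constant row of $\beta J$.
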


\begin{proof}
Let $\beta=\sqrt{(n+1)/n}$ and $\mathcal B_\beta=2L_T+\beta J$.
Since $L_T\one=0$, the inner product of its $i$th row with the
constant row of $\beta J$ is $0$. The squared Euclidean norm of the
$i$th row of $L_T$ is $d_i^2+d_i$, so
\[
 \|(\mathcal B_\beta)_{i,*}\|^2
 =4(d_i^2+d_i)+n\beta^2=x_i^2+n.
\]
\Cref{lem:hadamard,lem:tour-shift} give
\[
 n\beta\,2^{n-1}\tau(T)
 =\det \mathcal B_\beta
 \le\sqrt{\prod_{i=1}^n(x_i^2+n)},
\]
which proves the first inequality.

For $x\ge1$, let
\[
 g_n(x)=\log\frac{x^2+n}{n(n+1)}-\frac{2(x-n)}{n+1}.
\]
We have
\[
 g_n'(x)=-\frac{2(x-1)(x-n)}{(n+1)(x^2+n)}.
\]
Thus $g_n$ increases on $[1,n]$ and decreases on $[n,\infty)$,
with unique maximum $g_n(n)=0$. In particular,
\[
 \log(x^2+n)
 \le\log(n(n+1))+\frac{2(x-n)}{n+1}.
\]
Summing over the vertices and using $\sum_i x_i=n^2$, we obtain
\[
 \prod_{i=1}^n(x_i^2+n)\le[n(n+1)]^n,
\]
proving the second inequality. More precisely, the first bound yields
\begin{equation}\label{eq:tour-penalty-bound}
 \frac{\tau(T)}{U_n}
 \le\exp\left(\frac12\sum_{i=1}^n g_n(x_i)\right).
\end{equation}
The nonpositive terms $g_n(x_i)$ quantify the loss in the bound when the
degrees depart from the regular value.
\end{proof}

The constant $U_n$ is also the total-count bound for regular tournaments
in \Cref{lem:bandekar-upper}. For an arbitrary tournament, the general
degree bound in that lemma reads
\[
 \tau(T)\le
 \left(\frac{n(n+1)}4+
       \frac1{n-1}\sum_{i=1}^n
       \left(d_i-\frac{n-1}{2}\right)^2\right)^{(n-1)/2}.
\]
Its right side equals $U_n$ in the regular case and increases with the
squared degree deviations. The product bound in \Cref{prop:tour-upper}
gives $U_n$ for every tournament and retains the individual degree
factors, which yield the nonpositive correction in
\eqref{eq:tour-penalty-bound}.

\subsection{A minimum-degree bound}

We compare \eqref{eq:tour-penalty-bound} with the count of the swirl
tournament $\mathrm{SW}_n$ from \Cref{lem:tournament-minima}.

\begin{lemma}\label{lem:tour-minimum}
Let $n\ge5$ be odd. If $T$ maximizes $\tau(T)$ among tournaments of order $n$,
then
\[
 \min_i x_i>\frac{n-2}{2}.
\]
Equivalently, its minimum outdegree is at least $\lfloor n/4\rfloor$.
\end{lemma}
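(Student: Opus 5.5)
The plan is to compare the penalty bound \eqref{eq:tour-penalty-bound} with the explicit lower bound that a maximizer must satisfy. If $T$ maximizes $\tau$, then $\tau(T)\ge\tau(\mathrm{SW}_n)$. By \Cref{lem:tournament-minima} in its total-count form,
\[
 \tau(\mathrm{SW}_n)=\frac{((n+1)/2)^n+((n-1)/2)^n}{n}.
\]
Dividing by $U_n=(n(n+1)/4)^{(n-1)/2}$ should simplify to
\[
 R_n:=\frac{\tau(\mathrm{SW}_n)}{U_n}
 =\frac12\left(\frac{n+1}{n}\right)^{(n+1)/2}
 \left(1+\left(\frac{n-1}{n+1}\right)^{n}\right).
\]
Its limit is $\tfrac12 e^{1/2}(1+e^{-2})\approx0.94$.

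On the other side, every term $g_n(x_i)$ is nonpositive, so \eqref{eq:tour-penalty-bound} gives
\[
 \frac{\tau(T)}{U_n}\le\exp\bigl(\tfrac12 g_n(x_{\min})\bigr).
\]
Suppose $x_{\min}\le(n-2)/2$. Since $x_{\min}\ge1$ and $g_n$ increases on $[1,n]$, we get $g_n(x_{\min})\le g_n((n-2)/2)$. A direct computation gives
\[
 g_n\!\left(\tfrac{n-2}{2}\right)=\log\frac{(n-2)^2+4n}{4n(n+1)}+\frac{n+2}{n+1}.
\]
This tends to $1-\log4\approx-0.386$, so $\exp(\tfrac12 g_n)$ tends to about $0.82$. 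That is strictly below the limit $0.94$ of $R_n$, giving the contradiction $\tau(T)<\tau(\mathrm{SW}_n)$ and hence $x_{\min}>(n-2)/2$.

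The main obstacle is making the inequality
\[
 \tfrac12 g_n\!\left(\tfrac{n-2}{2}\right)<\log R_n
\]
rigorous for every odd $n\ge5$, not just asymptotically. I would first bound $\log R_n$ from below uniformly, using $((n+1)/n)^{(n+1)/2}\ge e^{1/2}$, which follows from $(1+1/n)^{n+1}\ge e$. This gives $\log R_n\ge\tfrac12-\log 2$ after dropping the positive correction $(1+\cdots)$. I would then bound $g_n((n-2)/2)$ from above by an elementary estimate of the form $1-\log4+O(1/n)$. The margin at infinity is about $0.19-0.06$, so the explicit error terms should close the inequality for all $n$ beyond a small threshold. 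The finitely many remaining odd $n$, perhaps $5,7,9$, I would check numerically. If the crude bound on $\log R_n$ fails there, I would keep the factor $1+((n-1)/(n+1))^n$.

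Finally, the equivalence with the outdegree statement is arithmetic. The condition $x=2d+1>(n-2)/2$ means $d>(n-4)/4$. For $n=4k+1$ this reads $d>k-\tfrac34$, and for $n=4k+3$ it reads $d>k-\tfrac14$. In both cases it is equivalent to $d\ge k=\lfloor n/4\rfloor$.
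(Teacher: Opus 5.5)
Your strategy and explicit computations agree with the paper's proof. The formula for $R_n=\tau(\mathrm{SW}_n)/U_n$, the value of $g_n((n-2)/2)$, both limits, and the outdegree equivalence are all correct. The gap is in the step you single out as the main obstacle: as planned, it cannot succeed for any $n$. Since $\frac{n+2}{2(n+1)}=\frac12+\frac1{2(n+1)}$,
\[
 \exp\Bigl(\tfrac12 g_n\bigl(\tfrac{n-2}{2}\bigr)\Bigr)
 =\tfrac12 e^{1/2}\sqrt{\frac{n^2+4}{n(n+1)}}\;e^{1/(2(n+1))}.
\]
The bound $\log\frac{n(n+1)}{n^2+4}\le\frac{n-4}{n^2+4}$ shows that the logarithm of the last two factors is at least $\frac{3n+8}{2(n+1)(n^2+4)}>0$. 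Hence the penalty bound strictly exceeds your crude lower bound $R_n\ge\frac12e^{1/2}$ for every $n$, and both sides tend to $\frac12e^{1/2}$. There is no threshold beyond which explicit error terms close the inequality, and a finite numerical check cannot rescue it. The margin of about $0.13$ you see at infinity is exactly $\log(1+e^{-2})$, so it comes entirely from the factor $1+((n-1)/(n+1))^n$ that you propose to drop. Keeping the exact first factor alone does not suffice either: at $n=5$, $\frac12(6/5)^3=0.864$ lies below the penalty bound $\approx0.881$, and the same happens narrowly at $n=7$.

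The correction factor must therefore be retained, with a bound uniform in $n$, which is what the paper does. For $n\ge5$ it bounds the penalty side by $\frac12e^{7/12}<\frac9{10}$, using $n^2+4\le n(n+1)$ and $\frac{n+2}{2(n+1)}\le\frac7{12}$. It bounds $R_n$ below by $\frac{13}{16}+\frac4{45}=\frac{649}{720}>\frac9{10}$. The first summand uses three binomial terms of $(1+1/n)^{(n+1)/2}$, and the second uses $(1-1/n)^n\ge\frac8{27}$ and $(1+1/n)^{(n-1)/2}<\sqrt e<\frac53$.

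Your normalization admits an equally short repair. The function $n\log\frac{n-1}{n+1}$ increases in $n$, because its derivative $\frac{2n}{n^2-1}-\log\frac{n+1}{n-1}$ is positive. Hence $((n-1)/(n+1))^n\ge(2/3)^5=\frac{32}{243}$ for $n\ge5$, giving $R_n\ge\frac12e^{1/2}\cdot\frac{275}{243}$. The display above is at most $\frac12e^{1/2}e^{1/12}$ for $n\ge5$, and $e^{1/12}<\frac{275}{243}$. Your fallback sentence points in the right direction, but it must be applied for all $n$, not only for $n=5,7,9$.
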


\begin{proof}
We show that a tournament violating the asserted degree bound has
$\tau(T)<(9/10)U_n$, whereas $\tau(\mathrm{SW}_n)>(9/10)U_n$.
Suppose some $x_i\le t=(n-2)/2$. Since $g_n$ is nonpositive on
$[1,\infty)$ and increases on $[1,n]$, \eqref{eq:tour-penalty-bound}
implies
\begin{align*}
 \frac{\tau(T)}{U_n}
 &\le\exp\bigl(g_n(t)/2\bigr)\\
 &=\sqrt{\frac{n^2+4}{4n(n+1)}}
   \exp\left(\frac{n+2}{2(n+1)}\right)
 \le\frac12 e^{7/12}<\frac9{10}.
\end{align*}
Indeed, $n^2+4\le n(n+1)$ for $n\ge4$, and
$(n+2)/(2(n+1))\le7/12$ for $n\ge5$. The last strict inequality
uses $e^{7/12}\approx1.79200<9/5$.

On the other hand, \Cref{lem:tournament-minima} gives
\[
 \frac{\tau(\mathrm{SW}_n)}{U_n}
 =\frac12\left[
 (1+1/n)^{(n+1)/2}
 +\frac{(1-1/n)^n}{(1+1/n)^{(n-1)/2}}
 \right].
\]
The first three binomial terms yield
\[
 (1+1/n)^{(n+1)/2}
 \ge1+\frac{n+1}{2n}+\frac{n^2-1}{8n^2}
 =\frac{13}{8}+\frac1{2n}-\frac1{8n^2}
 >\frac{13}{8}.
\]
Also $(1-1/n)^n\ge8/27$ for $n\ge3$: the derivative of
$x\log(1-1/x)$ is
$\log(1-1/x)+1/(x-1)>0$, by
$-\log(1-u)<u/(1-u)$ for $0<u<1$.
Finally,
\[
 (1+1/n)^{(n-1)/2}<\sqrt e<\frac53.
\]
The first inequality follows from $\log(1+s)<s$, and the second from
$e\approx2.71828<25/9$. Consequently,
\[
 \frac{\tau(\mathrm{SW}_n)}{U_n}
 >\frac{13}{16}+\frac4{45}
 =\frac{649}{720}>\frac9{10}.
\]
Thus $\tau(T)<\tau(\mathrm{SW}_n)$, contrary to maximality.
\end{proof}

\subsection{Arc reversals}

The inverse estimates in \Cref{lem:tour-cayley} control the determinant change under an arc reversal.

When taking a principal submatrix of $\mathcal B(T)$, we retain the
diagonal entries $x_i=2d_T^+(i)+1$, computed in the full tournament $T$.
The same convention applies to each tournament obtained by an arc
reversal.

\begin{lemma}\label{lem:tour-arc}
Let $u\to v$ be an arc of a tournament $T$ of order $n\ge3$. Order $u,v$
first and let
\[
 \mathcal B(T)=\begin{pmatrix}
 x_u&-1&p^{\mathsf T}\\
 1&x_v&q^{\mathsf T}\\
 -p&-q&C
 \end{pmatrix},
 \qquad
 \eta=(q-p)^{\mathsf T}C^{-1}(q+p).
\]
If $T^{uv}$ is obtained by reversing $u\to v$, then
\begin{equation}\label{eq:tour-arc}
 \Psi(T^{uv})-\Psi(T)
 =2\det C\,(x_u-x_v-2-\eta).
\end{equation}
If every $x_i\ge\mu>0$, then
\[
 |\eta|\le\frac{n-2}{\mu}.
\]
\end{lemma}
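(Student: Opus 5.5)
The plan is to compute both determinants with the Schur identity of \cref{lem:sylvester}, eliminating the common block $C$. Reversing $u\to v$ changes only four entries of $\mathcal B$. The diagonal entries become $x_u-2$ and $x_v+2$, since $d_u$ drops by one and $d_v$ rises by one. The entries in positions $(u,v)$ and $(v,u)$ become $+1$ and $-1$. Every other row and column is unchanged, including the diagonal entries of $C$, because other outdegrees do not change. So $p$, $q$ and $C$ are common to $\mathcal B(T)$ and $\mathcal B(T^{uv})$.

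Since $C=\Delta-\Omega$, with $\Delta$ the positive diagonal of the $x_i$, $i\ne u,v$, and $\Omega$ the corresponding block of $M_T$, \cref{lem:tour-cayley} gives that $C$ is invertible with $\det C>0$; this needs $n\ge3$. Write
\[
 a=p^{\T}C^{-1}p,\quad b=p^{\T}C^{-1}q,\quad c=q^{\T}C^{-1}p,\quad e=q^{\T}C^{-1}q.
\]
The Schur complement of $C$ in $\mathcal B(T)$ is
\[
 \begin{pmatrix}x_u&-1\\1&x_v\end{pmatrix}
 +\begin{pmatrix}a&b\\c&e\end{pmatrix},
\]
and its determinant is $(x_u+a)(x_v+e)-(b-1)(c+1)$. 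For $T^{uv}$ the determinant is $(x_u-2+a)(x_v+2+e)-(b+1)(c-1)$.

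Subtracting, the first products differ by $2(x_u+a)-2(x_v+e)-4$, and the cross terms differ by $2(b-c)$. Hence the difference is
\[
 2\det C\,(x_u-x_v-2+a-e+b-c).
\]
Expanding $\eta=(q-p)^{\T}C^{-1}(q+p)=e+c-b-a$ identifies the bracket with $x_u-x_v-2-\eta$, which proves \eqref{eq:tour-arc}.

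For the bound on $\eta$, the key combinatorial observation is about the signs of $p$ and $q$. For each $i\ne u,v$, the entries $p_i=-(M_T)_{ui}$ and $q_i=-(M_T)_{vi}$ are both $\pm1$. Thus exactly one of $(q+p)_i$ and $(q-p)_i$ is nonzero, and it equals $\pm2$. So $q-p$ and $q+p$ have disjoint supports, of sizes $k$ and $n-2-k$. The disjoint-support estimate of \cref{lem:tour-cayley} then gives
\[
 |\eta|\le\frac{\|q-p\|\,\|q+p\|}{2\mu}=\frac{4\sqrt{k(n-2-k)}}{2\mu}\le\frac{n-2}{\mu},
\]
using AM--GM in the form $\sqrt{k(n-2-k)}\le(n-2)/2$.

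The only delicate point I expect is bookkeeping. One must check that the diagonal convention, which retains the full-tournament values $x_i$, leaves $C$ literally unchanged under the reversal. One must also confirm that the signs of the $(u,v)$ and $(v,u)$ entries match the displayed block form. The algebra is otherwise routine.
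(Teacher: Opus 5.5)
Your proposal is correct and follows essentially the same route as the paper. You take the Schur complement of the common block $C$, with $\det C>0$ from \cref{lem:tour-cayley}, and compute the change of the $2\times 2$ determinant as $2\det C\,(x_u-x_v-2-\eta)$. You then apply the disjoint-support estimate to $q\pm p$ using $4\sqrt{k(n-2-k)}\le 2(n-2)$. The only cosmetic difference is that you expand both $2\times2$ determinants separately, whereas the paper records the change of the Schur complement as the increment $\left(\begin{smallmatrix}-2&2\\-2&2\end{smallmatrix}\right)$ and reads off $2(h_{11}-h_{22}+h_{12}-h_{21})$.
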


\begin{proof}
The matrix $C$ is a positive diagonal matrix with entries $x_i\ge1$
minus a real skew-symmetric matrix. Thus \Cref{lem:tour-cayley}, with
$\mu=1$, gives $\det C>0$. By the Schur identity in
\Cref{lem:sylvester}, let $H=(h_{ij})_{i,j=1}^2$ be its Schur
complement in $\mathcal B(T)$. Then $\Psi(T)=\det C\,\det H$, where
\[
 h_{11}=x_u+p^{\mathsf T}C^{-1}p,
 \qquad h_{22}=x_v+q^{\mathsf T}C^{-1}q,
\]
\[
 h_{12}=-1+p^{\mathsf T}C^{-1}q,
 \qquad h_{21}=1+q^{\mathsf T}C^{-1}p.
\]
Reversing the arc changes $x_u,x_v$ to $x_u-2,x_v+2$, respectively,
and changes the off-diagonal entries $-1,1$ to $1,-1$. Thus the Schur
complement changes by
\[
 \begin{pmatrix}-2&2\\-2&2\end{pmatrix}.
\]
Its determinant changes by
\[
 2(h_{11}-h_{22}+h_{12}-h_{21})
 =2(x_u-x_v-2-\eta).
\]
Multiplying by $\det C$ proves \eqref{eq:tour-arc}.

Since $p_i,q_i\in\{-1,1\}$, at each coordinate one of $q-p,q+p$ is
zero. These vectors therefore have disjoint supports. If their support
sizes are $a,b$, then $a+b=n-2$, and
\[
 \|q-p\|\,\|q+p\|=4\sqrt{ab}\le2(a+b)=2(n-2).
\]
Applying the disjoint-support estimate in \Cref{lem:tour-cayley} gives
the asserted bound on $\eta$.
\end{proof}

\begin{corollary}\label{cor:tour-arc-improvement}
Let $T$ be a tournament of order $n\ge3$. If $\min_i x_i>(n-2)/2$, reversing an arc $u\to v$ with
$d_u\ge d_v+2$ strictly increases $\Psi$.
If every $x_i\ge n-2$, reversing an arc with $d_u-d_v=1$ changes
$\Psi$ by at least $-2\det C$, whereas reversing an arc with
$d_u-d_v=2$ changes it by at least $2\det C$.
Here $C$ is the principal matrix outside the endpoints, as in
\Cref{lem:tour-arc}.
\end{corollary}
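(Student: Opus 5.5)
This follows directly from \eqref{eq:tour-arc} in \Cref{lem:tour-arc}, using $\det C>0$ and the bound on $\eta$. Since $x_i=2d_i+1$, we have $x_u-x_v=2(d_u-d_v)$. Hence
\[
 \Psi(T^{uv})-\Psi(T)=2\det C\,\bigl(2(d_u-d_v)-2-\eta\bigr).
\]
Because $\det C>0$ (from \Cref{lem:tour-cayley}, as noted in the proof of \Cref{lem:tour-arc}), the sign and size of the change are governed by the bracket. It therefore suffices to bound $\eta$ in each case.

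For the first claim, set $\mu=\min_i x_i>(n-2)/2$. \Cref{lem:tour-arc} gives $|\eta|\le(n-2)/\mu<2$. If $d_u\ge d_v+2$, the bracket is at least $4-2-\eta=2-\eta>0$, so $\Psi$ strictly increases. One degenerate point needs checking: \Cref{lem:tour-arc} was stated with $\mu>0$, and here $\mu>(n-2)/2\ge 1/2$ for $n\ge3$, so the lemma applies.

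For the second claim, take $\mu=n-2$, which is positive since $n\ge3$. Then $|\eta|\le1$. If $d_u-d_v=1$, the bracket equals $-\eta\ge-1$, so the change is at least $-2\det C$. If $d_u-d_v=2$, the bracket equals $2-\eta\ge1$, so the change is at least $2\det C$.

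There is no real obstacle. The only point needing care is that $C$, and hence $\det C$, is the principal submatrix of $\mathcal B(T)$ for the original tournament $T$. This holds because reversing $u\to v$ alters only the rows and columns of $u$ and $v$ and their diagonal entries, so $C$ is unchanged. That is exactly the setting of \eqref{eq:tour-arc}.
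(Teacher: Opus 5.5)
Your proposal is correct and follows the paper's own argument: substitute $x_u-x_v=2(d_u-d_v)$ into \eqref{eq:tour-arc}, use $\det C>0$, and bound $|\eta|$ by \Cref{lem:tour-arc}, which gives $|\eta|<2$ when $\min_i x_i>(n-2)/2$ and $|\eta|\le1$ when every $x_i\ge n-2$. Your extra remarks, that the lemma applies with a positive $\mu$ and that $C$ is unchanged by the reversal, are consistent with that setup.
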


\begin{proof}
Under the first hypothesis, $|\eta|<2$, while $x_u-x_v\ge4$.
Hence $x_u-x_v-2-\eta>0$ in \eqref{eq:tour-arc}.
Under the second hypothesis, $|\eta|\le1$ and
$x_u-x_v-2=2(d_u-d_v-1)$. The claimed bounds now follow from
\eqref{eq:tour-arc} and $\det C>0$.
\end{proof}

\subsection{Proof of regularity}

By \Cref{lem:tour-minimum,cor:tour-arc-improvement}, every maximizer of
odd order $n\ge5$ satisfies the local condition
\begin{equation}\label{eq:tour-local}
 u\to v\quad\Longrightarrow\quad d_u\le d_v+1.
\end{equation}
This local condition restricts the degree sequence to three consecutive values, providing the path used in the final balancing step.

\begin{lemma}\label{lem:tour-degree-path}
Suppose a tournament $T$ of order $n$ satisfies \eqref{eq:tour-local}.
Then its maximum
and minimum outdegrees differ by at most two. If $T$ has order $2d+1$
and is nonregular, its outdegrees belong to $\{d-1,d,d+1\}$, and there
is a directed path $u\to w\to v$ with
\[
 d_u=d+1,\qquad d_w=d,\qquad d_v=d-1.
\]
\end{lemma}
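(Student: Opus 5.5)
We argue by considering a vertex $u$ of maximum outdegree $\Delta$ and a vertex $v$ of minimum outdegree $\delta$. If $u\to v$, then \eqref{eq:tour-local} already gives $\Delta\le\delta+1$. Otherwise $v\to u$, and we look at the out-neighbourhood $N^+(u)$, which has $\Delta$ elements. Every $w\in N^+(u)$ satisfies $d_w\ge\Delta-1$ by \eqref{eq:tour-local}. Now count arcs inside the set $S=N^+(u)\cup\{v\}$ if needed; the cleaner route is a two-step reachability argument. Suppose $\Delta\ge\delta+3$. Each $w\in N^+(u)$ has $d_w\ge\Delta-1\ge\delta+2$, so $w\not\to v$ by \eqref{eq:tour-local}. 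Hence $v\to w$ for every $w\in N^+(u)$, and also $v\to u$. Thus $d_v\ge\Delta+1>\delta$, a contradiction. Therefore $\Delta-\delta\le2$.

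Next take $n=2d+1$ with $T$ nonregular. The average outdegree is exactly $d$, so $\delta<d<\Delta$. Combined with $\Delta-\delta\le2$, this forces $\delta=d-1$ and $\Delta=d+1$, so all outdegrees lie in $\{d-1,d,d+1\}$.

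It remains to produce the path $u\to w\to v$ with $d_u=d+1$, $d_w=d$, $d_v=d-1$. Fix such $u$ and $v$. By \eqref{eq:tour-local} the arc $u\to v$ is impossible, since $d_u=d_v+2$, so $v\to u$. Every out-neighbour $w$ of $u$ has $d_w\ge d$. We claim some $w\in N^+(u)$ has $w\to v$, and then $d_w\le d_v+1=d$ forces $d_w=d$, giving the path. If no such $w$ exists, then $v$ dominates all of $N^+(u)\cup\{u\}$, so $d_v\ge d+2$, which is absurd. The claim follows, and such a $w$ necessarily has degree exactly $d$.

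The only delicate point is the first step. The same domination count handles it, because $v$ must beat $u$ together with all $\Delta$ out-neighbours of $u$, and $\Delta\ge\delta+1$ already gives the contradiction whenever all those out-neighbours are forbidden from pointing to $v$. The proof is therefore short, and the main care lies in tracking the inequalities of \eqref{eq:tour-local} correctly.
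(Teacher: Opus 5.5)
Your proof is correct and is essentially the paper's argument. For the spread bound, the paper places $\{u\}\cup N^+(u)$ in the high-degree class $H=\{i:d_i\ge M-1\}$ and $\{v\}\cup N^-(v)$ in the disjoint low-degree class $F=\{i:d_i\le m+1\}$, which amounts to your containment $\{u\}\cup N^+(u)\subseteq N^+(v)$. For the path, the paper computes $|P|-|Q|=3$ for $P=N^+(u)\cap N^-(v)$ and $Q=N^+(v)\cap N^-(u)$, whereas you reuse the same domination count to get $P\ne\emptyset$, which is all that is needed.
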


\begin{proof}
Let $M,m$ be the maximum and minimum outdegrees. Suppose $M-m\ge3$,
and let
\[
 H=\{i:d_i\ge M-1\},\qquad F=\{i:d_i\le m+1\}.
\]
These sets are disjoint. By \eqref{eq:tour-local}, all outneighbors of a
vertex of degree $M$ lie in $H$, and all inneighbors of a vertex of
degree $m$ lie in $F$. Consequently,
\[
 M\le|H|-1,\qquad n-1-m\le|F|-1.
\]
It follows that
\[
 M-m\le|H|+|F|-n-1\le-1,
\]
a contradiction.

Now let $n=2d+1$ and suppose $T$ is nonregular. The average outdegree is
$d$, so $m<d<M$. Since the degrees are integers and $M-m\le2$, the
extremes are $d-1,d+1$.
Choose $u,v$ with $d_u=d+1$ and $d_v=d-1$.
Condition \eqref{eq:tour-local} forces $v\to u$.
With $N_T^+(z),N_T^-(z)$ denoting the outneighbors and inneighbors of
$z$, let
\[
 P=N_T^+(u)\cap N_T^-(v),\qquad
 Q=N_T^+(v)\cap N_T^-(u).
\]
The common outneighbors cancel in the degree difference, and the arc
$v\to u$ contributes $-1$. Hence
\[
 2=d_u-d_v=|P|-|Q|-1.
\]
Thus $|P|-|Q|=3$, and we may choose $w\in P$.
The path $u\to w\to v$ and \eqref{eq:tour-local} give
$d_w\ge d_u-1=d$ and $d_w\le d_v+1=d$, proving $d_w=d$.
\end{proof}

The following lemma provides a strict improvement by reversing the two
arcs of such a path.

\begin{lemma}[Two-arc balancing]\label{lem:tour-two-arcs}
Let $n=2d+1\ge5$, and let $T$ have outdegrees in $\{d-1,d,d+1\}$.
Suppose $u\to w\to v$ is a directed path with degrees $d+1,d,d-1$ in
that order. Let $T_2$ be obtained by reversing both arcs of this path,
and let $C_0$ be the principal submatrix of $\mathcal B(T)$ outside $u,w,v$.
Then
\[
 \Psi(T_2)-\Psi(T)
 \ge\frac{2(n-1)}{n-2}\det C_0>0.
\]
\end{lemma}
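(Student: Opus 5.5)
The plan is to repeat the Schur-complement computation of \Cref{lem:tour-arc}, now with a $3\times3$ block. Order $u,w,v$ first and write
\[
 \mathcal B(T)=\begin{pmatrix}K&R^{\mathsf T}\\ -R&C_0\end{pmatrix},
\]
where $K$ is the $3\times3$ block on $\{u,w,v\}$ and $R$ has entries in $\{\pm1\}$. The two reversals do not change any arc meeting $C_0$. They change only the diagonal values $x_u,x_v$ and the entries of $K$. Since $\det C_0>0$ by \Cref{lem:tour-cayley}, the Schur identity of \Cref{lem:sylvester} gives
\[
 \Psi(T_2)-\Psi(T)=\det C_0\,\bigl(\det(H+\Delta)-\det H\bigr),
\]
where $H=K+R^{\mathsf T}C_0^{-1}R$ is the Schur complement. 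The change matrix is
\[
 \Delta=2\begin{pmatrix}-1&1&0\\-1&0&1\\0&-1&1\end{pmatrix}.
\]
This uses $x_u\mapsto x_u-2$, $x_v\mapsto x_v+2$, $x_w$ unchanged, and sign flips in the $(u,w)$ and $(w,v)$ entries. Since $\Delta$ has rank two and $\det\Delta=0$, the determinant difference is
\[
 \det(H+\Delta)-\det H=\tr(\adj(H)\Delta)+\sum_{\text{pairs}}(\text{$2\times2$ minors of }\Delta)\cdot(\text{complementary entries of }H),
\]
with no cubic term. Equivalently, one could write $\Delta$ as a sum of two rank-one terms and apply the rank-one formula twice.

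The degree data are $x_u=n+2$, $x_w=n$, $x_v=n-2$, and every $x_i\ge n-2$, so $\mu=n-2$ in \Cref{lem:tour-cayley}. The key step is to write $H=D_H-\Omega_H$, where
\[
 D_H=\diag\bigl(x_i+r_i^{\mathsf T}C_0^{-1}r_i\bigr),
\]
$r_i$ are the columns of $R$, and $\Omega_H$ is the skew-symmetric part of $H$. I would then expand both terms of the difference in these quantities.

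\begin{itemize}
\item \emph{Main terms.} For the diagonal part alone, the linear term gives $2x_w(x_u-x_v)$ plus diagonal corrections. The quadratic term gives $4(h_{vv}+h_{uu}-h_{ww})$, using the principal $2\times2$ minors $4,4,-4$ of $\Delta$.
\item \emph{Corrections.} These involve the quadratic forms $r_i^{\mathsf T}C_0^{-1}r_j$. By \Cref{lem:tour-cayley}, $0\le r_i^{\mathsf T}C_0^{-1}r_i\le(n-3)/(n-2)<1$. For $i\neq j$, split $r_i\pm r_j$ into disjoint supports, as in the proof of \Cref{lem:tour-arc}, to get bounds of order $(n-3)/(2(n-2))$.
\end{itemize}

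I expect the main obstacle to be that the crude leading terms (of order $8n$ and $4n$) carry corrections of the same order $n$ when multiplied by entries of size $n$. Estimating them entrywise will therefore not give the sharp constant $2(n-1)/(n-2)$. What I would try first is to group the expansion so that the large diagonal factors cancel exactly. The most promising grouping is as a combination of the single-arc quantities $x_u-x_w-2-\eta_{uw}$ and $x_w-x_v-2-\eta_{wv}$ from \eqref{eq:tour-arc}, together with one coupled cross term. The problem then reduces to bounding a few scalars of the form $(r_i\pm r_j)^{\mathsf T}C_0^{-1}(r_k\pm r_l)$. Each has a disjoint-support or quadratic-form bound from \Cref{lem:tour-cayley} with $\mu=n-2$.

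Positivity of the final lower bound then follows from $n\ge5$. If the exact grouping proves intractable, the fallback is to perform the two reversals successively. One would apply \Cref{lem:tour-arc} to the intermediate tournament, in which $u$ and $w$ both have outdegree $d$, and track how the second $\eta$ differs from the first through a rank-one update of $C$. The cross term is again controlled by \Cref{lem:tour-cayley}.
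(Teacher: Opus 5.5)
Your proposal has a genuine gap: the primary route stops exactly where the proof has to happen. You correctly sense that the order-$n$ terms in the $3\times3$ Schur expansion nearly cancel, but the cancellation is not only diagonal. Your ``main terms'' bullet gives $2x_w(x_u-x_v)+4(x_u+x_v-x_w)=12n$ for diagonal $H$. For $H=K$, however, $\det(K+\Delta)-\det K=(n^3+3n)-(n^3-n)=4n$, because several off-diagonal entries of $\adj(K+\Delta)-\adj K$ are also of size about $2n$. So the corrections $r_i^{\mathsf T}C_0^{-1}r_j$ enter with order-$n$ coefficients both on and off the diagonal. Entrywise bounds from \Cref{lem:tour-cayley} then leave no positive margin. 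The regrouping that would make the large terms cancel is the whole content of the lemma, and you neither specify it nor check that the leftover scalars are small enough. Incidentally, the constant $2(n-1)/(n-2)$ is not sharp; it is just what the right grouping produces. Your remark that $\Delta$ splits into two rank-one pieces is the right decomposition, since each single reversal is a rank-one change, but it is not carried out.

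The missing idea is your fallback, done in a specific way, and it is the paper's proof. First, a correction: after reversing $u\to w$, $u$ has outdegree $d$ and $w$ has $d+1$, not both $d$. You also do not need to relate the two $\eta$'s. Let $V_0=V(T)\setminus\{u,w,v\}$, and let $p,q$ record the arcs from $u,v$ to $V_0$ (your $r_u,r_v$). All $x_i\ge n-2$ at both steps, so $|\eta|\le1$ by \Cref{lem:tour-arc}. In $T$ we have $x_u-x_w-2=0$, so \eqref{eq:tour-arc} gives $\Psi(T_1)-\Psi(T)\ge-2\det C_1$, where $C_1$ is the block of $\mathcal B(T)$ on $\{v\}\cup V_0$. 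In $T_1$ we have $x_w-x_v-2=2$, so $\Psi(T_2)-\Psi(T_1)\ge2\det C_2$, where $C_2$ is the block of $\mathcal B(T_1)$ on $\{u\}\cup V_0$. Both matrices border the same $C_0$, with diagonal entry $n-2$ at $v$ and $n$ at $u$, and the bordering vector of $u$ is still $p$. The Schur identity therefore gives $\det C_2-\det C_1=\det C_0\,(2+p^{\mathsf T}C_0^{-1}p-q^{\mathsf T}C_0^{-1}q)$. The quadratic-form bounds of \Cref{lem:tour-cayley} with $\mu=n-2$ make this at least $\det C_0\,(2-\tfrac{n-3}{n-2})$. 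The loss and gain, each about $2n\det C_0$, cancel against each other, and the surviving margin comes from the diagonal gap $n-(n-2)$. This is exactly the cancellation your direct expansion would have had to rediscover.
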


\begin{proof}
Order the vertices as $u,w,v$, followed by the remaining $n-3$ vertices.
Since $u\to w\to v$, the matrix in this order is
\[
 \mathcal B(T)=
 \begin{pmatrix}
 n+2&-1&\varepsilon&p^{\mathsf T}\\
 1&n&-1&r^{\mathsf T}\\
 -\varepsilon&1&n-2&q^{\mathsf T}\\
 -p&-r&-q&C_0
 \end{pmatrix},
\]
where $\varepsilon\in\{-1,1\}$ records the orientation between $u,v$,
and $p,r,q\in\{-1,1\}^{n-3}$ record their arcs to the other vertices.
First reverse $u\to w$, obtaining $T_1$, and then reverse $w\to v$.
The degree triples at $u,w,v$ change as
\[
 (d+1,d,d-1)\longmapsto(d,d+1,d-1)\longmapsto(d,d,d).
\]
The corresponding diagonal triples before the two reversals are
$(n+2,n,n-2)$ and $(n,n+2,n-2)$. Thus every diagonal entry is at least
$n-2$ at both steps, and the degree differences for the two reversals
are $1$ and $2$.

The first reversal changes the leading $2\times2$ block from
$\left(\begin{smallmatrix}n+2&-1\\1&n\end{smallmatrix}\right)$ to
$\left(\begin{smallmatrix}n&1\\-1&n+2\end{smallmatrix}\right)$ and leaves
all other entries unchanged. Let $V_0=V(T)\setminus\{u,w,v\}$. Let $C_1$
be the principal submatrix of $\mathcal B(T)$ on $\{v\}\cup V_0$, and let
$C_2$ be that of $\mathcal B(T_1)$ on $\{u\}\cup V_0$. These are the
matrices outside the endpoints of the first and second reversals.
Ordering $v$ and $u$ first, respectively, gives
\[
 C_1=\begin{pmatrix}n-2&q^{\mathsf T}\\-q&C_0\end{pmatrix},
 \qquad
 C_2=\begin{pmatrix}n&p^{\mathsf T}\\-p&C_0\end{pmatrix}.
\]
In particular, the diagonal entry at $u$ in $C_2$ is $n$,
its value after the first reversal.
By \Cref{cor:tour-arc-improvement},
\[
 \Psi(T_1)-\Psi(T)\ge-2\det C_1,
 \qquad
 \Psi(T_2)-\Psi(T_1)\ge2\det C_2.
\]
To compare the possible loss with the guaranteed gain, use their common
principal submatrix $C_0$. This matrix is unchanged by the reversals.
Its diagonal entries are at least $n-2>0$, so \Cref{lem:tour-cayley}
gives $\det C_0>0$. The Schur identity in \Cref{lem:sylvester}
applied to the two displayed matrices gives
\[
 \det C_1=\det C_0\bigl(n-2+q^{\mathsf T}C_0^{-1}q\bigr),
 \qquad
 \det C_2=\det C_0\bigl(n+p^{\mathsf T}C_0^{-1}p\bigr).
\]
Since $\|q\|^2=n-3$, the quadratic-form estimate in
\Cref{lem:tour-cayley}, with $\mu=n-2$, yields
\[
 p^{\mathsf T}C_0^{-1}p\ge0,
 \qquad
 q^{\mathsf T}C_0^{-1}q\le\frac{n-3}{n-2}.
\]
Therefore
\[
 \det C_2-\det C_1
 \ge\det C_0\left(2-\frac{n-3}{n-2}\right)
 =\frac{n-1}{n-2}\det C_0.
\]
Adding the two reversal bounds proves the asserted estimate.
The strict positivity follows from \Cref{lem:tour-cayley}.
\end{proof}

\begin{proof}[Proof of \Cref{thm:regular-main}]
Let $n=2d+1\ge5$, and choose a tournament $T$ maximizing $\tau(T)$.
By \Cref{lem:tour-minimum,cor:tour-arc-improvement}, $T$ satisfies
\eqref{eq:tour-local}. If $T$ were nonregular,
\Cref{lem:tour-degree-path} would place all its degrees in
$\{d-1,d,d+1\}$ and give a directed path with degrees $d+1,d,d-1$.
Reversing its two arcs strictly increases $\Psi$, by
\Cref{lem:tour-two-arcs}, and hence strictly increases $\tau(T)$.
This contradicts maximality.

For $n=1$, the unique tournament is regular and has one arborescence,
consisting of its single vertex. For $n=3$, the directed $3$-cycle has total count $3$,
whereas the transitive tournament has total count $2$. These two isomorphism types establish the assertion for $n=3$.
\end{proof}

﻿\section{Complete bipartite graphs}\label{sec:problem55}
Problem~5.5 of \cite{bandekar2025} asks for the maximum rooted count in
Eulerian orientations of $K_{n,m}$ and the maximum total count in arbitrary
orientations. For arbitrary orientations, a common determinant comparison
gives the maximum and all maximizers when $m\ge n(n-1)$, and a sharp
design bound when $nm/(n+m)$ is an integer. For Eulerian orientations, integer common-neighbor counts
strengthen this comparison, with equality in two infinite families.
Throughout this section, $2\le n\le m$.

\subsection{A symmetric determinant comparison}
Let the parts be $U=\{u_1,\ldots,u_n\}$ and $V=\{v_1,\ldots,v_m\}$.
Let $C\in\{0,1\}^{n\times m}$ encode the arcs from $U$ to $V$ by
\[
 C_{ij}=1\quad\Longleftrightarrow\quad u_i\longrightarrow v_j.
\]
Let
\[
 r_i=\sum_j C_{ij},\qquad k_j=\sum_i C_{ij},\qquad q_j=n-k_j.
\]
Thus $r_i=d_D^+(u_i)$ and $q_j=d_D^+(v_j)$. Let
$R=\diag(r_i)$, $Q=\diag(q_j)$ and $P=I_n-J_n/n$.
The next lemma replaces the nonsymmetric Laplacian by a positive
semidefinite matrix whose trace is determined by the column sums.

\begin{lemma}\label{lem:bipartite-determinant}
Every orientation $D$ of $K_{n,m}$ satisfies
\[
 n\tau(D)=\det\begin{pmatrix}R+J_n&C\\-C^{\T}&Q\end{pmatrix}.
\]
If $q_j>0$ for every $j$, let
\[
 W=R+CQ^{-1}C^{\T},\qquad K=\sum_j k_j,\qquad H=\sum_j\frac{k_j}{q_j}.
\]
Then
\begin{align}
 \tau(D)&=\frac{\prod_jq_j}{n}\det(W+J_n),\label{eq:bipartite-schur}\\
 \tr W&=K+H,\qquad \frac1n\one^{\T}W\one=H,
 \qquad \tr(PWP)=K,\label{eq:bipartite-traces}
\end{align}
and
\[
 \tau(D)\le\Phi(q):=\frac{\prod_jq_j}{n}
       \left(\frac{K}{n-1}\right)^{n-1}(n+H).
\]
For $\tau(D)>0$, equality holds if and only if
\begin{equation}\label{eq:bipartite-profile-equality}
 W=\frac{K}{n-1}P+\frac{H}{n}J_n.
\end{equation}
\end{lemma}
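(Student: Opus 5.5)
The plan is to obtain the determinant identity from the rank-one formula of \cref{lem:sylvester} together with the adjugate identity of \cref{lem:matrix-tree}, and then to reduce the inequality to a Schur complement, a Fischer-type inequality and AM--GM.

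\textbf{Step 1: the determinant identity.} Order the vertices as $U$ then $V$. The arcs from $V$ to $U$ are recorded by $J_{m\times n}-C^{\T}$, so
\[
 L_D=\begin{pmatrix}R&-C\\-(J_{m\times n}-C^{\T})&Q\end{pmatrix}.
\]
By \cref{lem:matrix-tree}, $\adj L_D=\one a^{\T}$ with $a=(a_r(D))_r$. The rank-one formula in \cref{lem:sylvester} then gives, for any vectors $x,y$,
\[
 \det(L_D+xy^{\T})=y^{\T}\one\,a^{\T}x .
\]
Take $x=\one$ and $y=\one_U$, so that the right side is $n\tau(D)$. On the left,
\[
 L_D+\one\one_U^{\T}=\begin{pmatrix}R+J_n&-C\\C^{\T}&Q\end{pmatrix}.
\]
Conjugating by $\diag(I_n,-I_m)$ flips the signs of both off-diagonal blocks and leaves the determinant unchanged. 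This gives the first displayed identity.

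\textbf{Step 2: the Schur complement and the traces.} When every $q_j>0$, the Schur identity with lower-right block $Q$ gives
\[
 n\tau(D)=\det Q\,\det(R+J_n+CQ^{-1}C^{\T})=\prod_j q_j\,\det(W+J_n),
\]
which is \eqref{eq:bipartite-schur}. For \eqref{eq:bipartite-traces}:
\begin{itemize}
 \item $\tr R=\sum_i r_i=K$, and column $j$ of $C$ has squared norm $k_j$, so $\tr(CQ^{-1}C^{\T})=H$.
 \item $\one^{\T}R\one=K$ and $\one^{\T}CQ^{-1}C^{\T}\one=\sum_j k_j^2/q_j$. Hence
 \[
 \one^{\T}W\one=\sum_j k_j\Bigl(1+\frac{k_j}{q_j}\Bigr)=\sum_j\frac{nk_j}{q_j}=nH,
 \]
 using $k_j+q_j=n$.
 \item $\tr(PWP)=\tr W-\frac1n\one^{\T}W\one=K$.
\end{itemize}

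\textbf{Step 3: the inequality.} The matrix $W$ is symmetric positive semidefinite, since $R\ge0$ and $Q^{-1}>0$. Work in an orthonormal basis whose first vector is $n^{-1/2}\one$ and whose remaining vectors span $\mathcal H=\one^\perp$. In this basis
\[
 W+J_n=\begin{pmatrix}H+n&b^{\T}\\ b&S\end{pmatrix},\qquad S=(PWP)|_{\mathcal H}\succeq0 ,
\]
with $\tr S=K$.
\begin{itemize}
 \item If $S$ is singular, then $W+J_n$ is positive semidefinite with a singular principal block, so $\det(W+J_n)=0$ and the bound is trivial.
 \item Otherwise, the Schur identity gives
 \[
 \det(W+J_n)=\det S\,\bigl(H+n-b^{\T}S^{-1}b\bigr)\le(n+H)\det S .
 \]
 AM--GM on the $n-1$ nonnegative eigenvalues of $S$ gives $\det S\le(K/(n-1))^{n-1}$.
\end{itemize}
Multiplying by $\prod_j q_j/n$ yields $\tau(D)\le\Phi(q)$.

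\textbf{Step 4: the equality case.} This is the step needing the most care. If $\tau(D)>0$, then $\det(W+J_n)>0$, so $S$ is positive definite. Equality therefore requires two things:
\begin{itemize}
 \item $b^{\T}S^{-1}b=0$, which forces $b=0$, so $W$ commutes with $J_n$;
 \item equality in AM--GM, which forces $S=\frac{K}{n-1}I_{\mathcal H}$.
\end{itemize}
Together with $\frac1n\one^{\T}W\one=H$ from \eqref{eq:bipartite-traces}, this is exactly
\[
 W=\frac{K}{n-1}P+\frac Hn J_n,
\]
that is, \eqref{eq:bipartite-profile-equality}. Conversely, that form of $W$ gives $b=0$ and a scalar $S$, so both inequalities in Step 3 are equalities.
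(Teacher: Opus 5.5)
Your proof is correct. Steps 1 and 2 are the paper's argument: the paper conjugates by $T=\diag(I_n,-I_m)$ first and then adds $wu^{\T}$ with $w=T\one$ and $u=(\one_n,0)^{\T}$, which gives the same matrix as your $T(L_D+\one\one_U^{\T})T$. When you drop the first term of the rank-one formula, state explicitly that $\det L_D=0$. Your trace computation via $\tr R=K$ and $C^{\T}\one=(k_j)_j$ is a slightly more direct version of the paper's column-by-column decomposition of $W$.

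The inequality and its equality case are where you genuinely differ. The paper averages $SWS^{\T}$ over all permutation matrices $S$. The permutation-invariant quantities $\tr W$ and $\one^{\T}W\one$ force this average to be $\overline W=\frac{K}{n-1}P+\frac HnJ_n$. The paper then applies strict concavity of $\log\det$ on positive definite matrices (Jensen), using $SJ_nS^{\T}=J_n$. Equality then means all conjugates coincide, that is, $W=\overline W$.

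You instead split $\mathbb R^n=\operatorname{span}\one\oplus\one^\perp$ and bound $\det(W+J_n)$ by a Fischer-type Schur-complement inequality, followed by AM--GM on the eigenvalues of the compression $S$. Your route is more elementary, since it needs no concavity or second-derivative computation. It also isolates the two sources of slack explicitly:
\begin{itemize}
 \item the coupling $b$, which vanishes exactly when $W\one$ is parallel to $\one$;
 \item the eigenvalue spread of $PWP$ on $\one^\perp$.
\end{itemize}
This is the same compression-plus-AM--GM mechanism the paper itself uses later, in \Cref{lem:bipartite-sink} and in the Eulerian product bound. The paper's symmetrization is shorter once concavity is available. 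It also explains conceptually why the extremal $W$ is the unique permutation-invariant matrix with the prescribed trace data.
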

\begin{proof}
By \Cref{lem:matrix-tree}, the out-Laplacian and its adjugate are
\[
 L_D=\begin{pmatrix}R&-C\\C^{\T}-J_{m,n}&Q\end{pmatrix},
 \qquad \adj L_D=\one t^{\T},\qquad t=(a_v(D))_v.
\]
Let $T=\diag(I_n,-I_m)$,
$w=T\one$ and $u=(\one_n,0)^{\T}$. Direct block multiplication gives
\[
 TL_DT+wu^{\T}
 =\begin{pmatrix}R&C\\J_{m,n}-C^{\T}&Q\end{pmatrix}
  +\begin{pmatrix}J_n&0\\-J_{m,n}&0\end{pmatrix}
 =\begin{pmatrix}R+J_n&C\\-C^{\T}&Q\end{pmatrix}.
\]
Since $T^2=I$ and $L_D\one=0$, we have
$\adj(TL_DT)=T(\adj L_D)T=wt^{\T}T$ and $\det(TL_DT)=0$.
The rank-one identity in \Cref{lem:sylvester} gives
\[
 \det(TL_DT+wu^{\T})
 =u^{\T}\adj(TL_DT)w=(u^{\T}w)(t^{\T}Tw)=n\tau(D).
\]
Here $u^{\T}w=n$, $Tw=\one$ and $t^{\T}\one=\tau(D)$.

If every $q_j>0$, the Schur complement identity in
\Cref{lem:sylvester} gives
\[
 n\tau(D)=\det Q\,\det(R+J_n+CQ^{-1}C^{\T}),
\]
which proves \eqref{eq:bipartite-schur}. For the columns $s_j$ of $C$,
\[
 W=\sum_{j=1}^m\left(\diag(s_j)+\frac{s_js_j^{\T}}{q_j}\right).
\]
Each summand is positive semidefinite, with trace and quadratic form
\[
 k_j+\frac{k_j}{q_j},\qquad
 \one^{\T}\left(\diag(s_j)+\frac{s_js_j^{\T}}{q_j}\right)\one
 =k_j+\frac{k_j^2}{q_j}=\frac{nk_j}{q_j},
\]
since $s_j^{\T}s_j=s_j^{\T}\one=k_j$ and $k_j+q_j=n$.
Summing gives $\tr W=K+H$ and $\one^{\T}W\one=nH$; hence
\[
 \tr(PWP)=\tr(WP)=\tr W-\frac1n\one^{\T}W\one=K,
\]
by $P^2=P$ and cyclic invariance of trace. This proves
\eqref{eq:bipartite-traces}.

Let $\overline W$ be the average of $SWS^{\T}$ over all permutation
matrices $S$. Its diagonal and off-diagonal entries are constant, so
$\overline W=aP+bJ_n/n$. The permutation-invariant identities
\eqref{eq:bipartite-traces} give $(n-1)a=K$ and $b=H$, whence
\[
 \overline W=\frac K{n-1}P+\frac HnJ_n.
\]
If $\tau(D)>0$, the positive semidefinite matrix $W+J_n$ has positive
determinant by \eqref{eq:bipartite-schur}, and is therefore positive
definite. Strict concavity of $\log\det$ follows from its second
derivative along $A+tB$, for symmetric $B$:
\[
 -\tr\bigl((A+tB)^{-1}B(A+tB)^{-1}B\bigr)
 =-\bigl\|(A+tB)^{-1/2}B(A+tB)^{-1/2}\bigr\|_F^2.
\]
This is negative unless $B=0$. As $S J_n S^{\T}=J_n$, concavity gives
\[
 \log\det(W+J_n)\le\log\det(\overline W+J_n).
\]
The eigenvalues of $\overline W+J_n$ are $n+H$ on the span of $\one$
and $K/(n-1)$ on $\one^\perp$, giving the asserted bound by
\eqref{eq:bipartite-schur}. Strict concavity gives equality exactly
when all $SWS^{\T}$ coincide, or $W=\overline W$, as in
\eqref{eq:bipartite-profile-equality}. For $\tau(D)=0$, the bound
follows from its nonnegative right-hand side.
\end{proof}

A zero value of $q_j$ corresponds to a sink in $V$ and requires a
cofactor in place of the Schur complement above.
\begin{lemma}\label{lem:bipartite-sink}
If $D$ has two sinks, then $\tau(D)=0$. If $v_m$ is its unique sink,
delete column $m$ from $C$, and let $R'$, $Q'$ and $W'$ be the
corresponding matrices for the remaining matrix $C'$. Let $K'=\sum_{j<m}k_j$. Then
\[
 \tau(D)=\left(\prod_{j<m}q_j\right)
 \det\left((I_n+PW'P)|_{\one^\perp}\right)
 \le\left(\prod_{j<m}q_j\right)
       \left(1+\frac{K'}{n-1}\right)^{n-1}.
\]
\end{lemma}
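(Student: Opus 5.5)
The approach is to show that a sink forces the root, read off $\tau(D)$ as one cofactor, and reduce that cofactor by the same sign conjugation and Schur complement as in \cref{lem:bipartite-determinant}; the deleted sink column turns the $J_n$ term into a projection. The bound then follows from AM--GM.

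\textbf{Sinks and the root.} In an arborescence towards $r$, every vertex other than $r$ has exactly one outgoing arc. So a sink $s$ can only be the root, and $a_r(D)=0$ for $r\ne s$. With two sinks every rooted count vanishes, hence $\tau(D)=0$. If $v_m$ is the unique sink, then $\tau(D)=a_{v_m}(D)=\det L_D^{v_m}$ by \cref{lem:matrix-tree}. The column $m$ of $C$ is all ones, so $R=R'+I_n$. Moreover $q_j>0$ for $j<m$, so $Q'$ is invertible.

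\textbf{The cofactor.} Deleting row and column $v_m$ from $L_D$ gives
\[
 \begin{pmatrix}R'+I_n&-C'\\C'^{\T}-J_{m-1,n}&Q'\end{pmatrix}.
\]
I will conjugate by $\diag(I_n,-I_{m-1})$, which does not change the determinant. The Schur identity in \cref{lem:sylvester} then gives
\[
 \tau(D)=\Bigl(\prod_{j<m}q_j\Bigr)\det\bigl(I_n+W'-yu^{\T}\bigr),\qquad y=C'Q'^{-1}\one,\quad u=\one_n .
\]
The key observation is $W'\one=ny$. This holds because $R'\one=C'\one=C'Q'^{-1}q$ and $C'^{\T}\one=k$, so $W'\one=C'Q'^{-1}(q+k)$, and $q_j+k_j=n$ for $j<m$.

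\textbf{The projection.} Consequently $I+W'-y\one^{\T}=I+W'P$. Sylvester's identity with $P^2=P$ gives $\det(I+W'P)=\det(I+PW'P)$. The matrix $I+PW'P$ is the identity on the span of $\one$ and preserves $\one^\perp$, which yields the stated equality.

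\textbf{The bound.} I will repeat the trace computation of \cref{lem:bipartite-determinant} for $C'$, which uses only $k_j+q_j=n$ for the retained columns. It shows that $PW'P$ is positive semidefinite with trace $K'$, and its restriction to $\one^\perp$ has the same trace. The eigenvalues of $(I+PW'P)|_{\one^\perp}$ are $1+\lambda_i$ with $\lambda_i\ge0$ and $\sum\lambda_i=K'$, over $n-1$ indices. AM--GM then gives $\bigl(1+K'/(n-1)\bigr)^{n-1}$.

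The main obstacle is identifying the rank-one correction $y\one^{\T}$ with $W'\one\one^{\T}/n$. Everything else is routine bookkeeping of signs and Schur complements.
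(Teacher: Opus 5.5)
Your proof is correct and follows the paper's argument step by step. A sink forces the root, and the cofactor is reduced by a Schur complement. The identity $W'\one_n=nC'(Q')^{-1}\one_{m-1}$ turns the rank-one term into $I_n+W'P$, and AM--GM on the compression of $W'$ gives the bound. The only cosmetic differences are two. Your sign conjugation is harmless but unnecessary. You also pass to $I_n+PW'P$ via Sylvester's identity, whereas the paper expands along the first column in an orthonormal basis beginning with $\one_n/\sqrt n$.
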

\begin{proof}
Every nonroot vertex of an arborescence has an outgoing arc, so two
sinks preclude an arborescence and a unique sink must be its root.
If $v_m$ is the unique sink, its column is $\one_n$, all $q_j>0$
for $j<m$, and $R=R'+I_n$. Thus
\Cref{lem:matrix-tree} gives
\[
 \tau(D)=a_{v_m}(D)
 =\det\begin{pmatrix}R'+I_n&-C'\\(C')^{\T}-J_{m-1,n}&Q'\end{pmatrix}.
\]
The Schur complement identity in \Cref{lem:sylvester} gives
\[
 \tau(D)=\left(\prod_{j<m}q_j\right)
 \det\bigl(I_n+W'-C'(Q')^{-1}\one_{m-1}\one_n^{\T}\bigr).
\]
For each remaining column $s_j$, the identity $q_j+k_j=n$ gives
\[
 \left(\diag(s_j)+\frac{s_js_j^{\T}}{q_j}\right)\one_n
 =\left(1+\frac{k_j}{q_j}\right)s_j=\frac n{q_j}s_j.
\]
Summing over $j<m$ yields
$W'\one_n=nC'(Q')^{-1}\one_{m-1}$. The matrix inside the
determinant is therefore $I_n+W'-W'J_n/n=I_n+W'P$.

In an orthonormal basis beginning with $\one_n/\sqrt n$, the first
column of $I_n+W'P$ is $(1,0,\ldots,0)^{\T}$, since $P\one_n=0$.
The remaining diagonal block is $(I_n+PW'P)|_{\one^\perp}$;
expansion along the first column proves the determinant identity.
By the column calculation in \Cref{lem:bipartite-determinant}, the
compression of $W'$ has nonnegative eigenvalues $\mu_1,\ldots,\mu_{n-1}$
summing to $K'$. The arithmetic--geometric mean inequality gives
\[
 \det\left((I_n+PW'P)|_{\one^\perp}\right)
 =\prod_{i=1}^{n-1}(1+\mu_i)
 \le\left(\frac{\sum_{i=1}^{n-1}(1+\mu_i)}{n-1}\right)^{n-1}
 =\left(1+\frac{K'}{n-1}\right)^{n-1}.
\]
\end{proof}

\subsection{Exact maximizers for \texorpdfstring{$m\ge n(n-1)$}{m >= n(n-1)}}
A fixed collection of singleton columns attains the determinant bound
once the larger part has enough vertices to contain it.
\begin{theorem}\label{thm:bipartite-large}
Every orientation $D$ of $K_{n,m}$ satisfies
\[
 \tau(D)\le 2n^{n+m-1}\left(\frac{n-1}{n}\right)^{n(n-1)}.
\]
Equality is possible if and only if $m\ge n(n-1)$. In this range,
\[
 \max_D\tau(D)=2n^{n-1}(n-1)^{n(n-1)}n^{m-n(n-1)},
\]
and every maximizing matrix $C$ has $n-1$ copies of each standard
basis vector $e_i$, together with $m-n(n-1)$ zero columns.
Consequently, the maximizing orientation is unique up to permutations
within the two parts.
\end{theorem}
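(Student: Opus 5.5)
The plan is to maximize the bound $\Phi(q)$ of \cref{lem:bipartite-determinant} over all admissible column sums, treat sinks separately with \cref{lem:bipartite-sink}, and then read off the equality cases from \eqref{eq:bipartite-profile-equality}. Write $B=2n^{n+m-1}((n-1)/n)^{n(n-1)}$ for the target value. A direct check shows that $\Phi$ equals $B$ for the profile with $n(n-1)$ columns having $k_j=1$ and the rest having $k_j=0$. In that case $K=n(n-1)$, $H=n$, $K/(n-1)=n$ and $n+H=2n$.

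\emph{Step 1: case $q_j>0$ for all $j$.} Write $q_j=n(1-k_j/n)$, so that
\[
 \log\Phi=m\log n-\log n+\sum_j\log(1-k_j/n)+(n-1)\log\tfrac{K}{n-1}+\log(n+H).
\]
Linearize the two global terms by the tangent-line bound $\log x\le\log x_0+(x-x_0)/x_0$, taken at $x_0=n$ for $K/(n-1)$ and at $x_0=2n$ for $n+H$. This gives
\[
 (n-1)\log\tfrac{K}{n-1}\le(n-1)\log n+\tfrac Kn-(n-1),\qquad
 \log(n+H)\le\log(2n)+\tfrac{H-n}{2n}.
\]
The bound now splits into a sum over columns. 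Set $f(k)=\log(1-k/n)+k/n+k/(2n(n-k))$ and $\lambda=f(1)$. Using $n(n-1)\lambda=n(n-1)\log(1-1/n)+(n-1)+\tfrac12$, the constants combine exactly to
\[
 \log\Phi\le\log B+\sum_j g(k_j),\qquad g(k)=f(k)-\lambda k .
\]
By construction $g(0)=g(1)=0$. The remaining task is the one-variable inequality $g(k)<0$ for integers $2\le k\le n-1$.

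\emph{Step 2: the one-variable inequality.} I expect this to be the main obstacle. It is a finite-range elementary inequality mixing $\log(1-k/n)$ with the rational term $k/(2n(n-k))$, which blows up near $k=n$. I would first try convexity or concavity of $g$ in a real variable $k\in[1,n-1]$. The derivative is
\[
 g'(k)=-\frac{1}{n-k}+\frac1n+\frac{1}{2(n-k)^2}-\lambda .
\]
Since $g$ vanishes at $0$ and $1$ and $\lambda$ is small, of order $1/(2n^2)$, the aim is to show that $g$ is at most unimodal-then-decreasing on $[1,n-1]$. A direct check at $k=n-1$ should show $g(n-1)<0$, where the $\log(1/n)$ term must dominate the positive term of size about $1/2$. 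If a clean sign pattern for $g'$ fails, I would bound $g$ separately on $[2,n/2]$ and on $[n/2,n-1]$, expanding $\log(1-k/n)$ by its series in the first range.

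\emph{Step 3: sinks and equality.} If there are two sinks, then $\tau=0$. If there is a unique sink, \cref{lem:bipartite-sink} gives $\tau\le\prod_{j<m}q_j(1+K'/(n-1))^{n-1}$. The same tangent-line splitting, with no $H$ term, should then give a strictly smaller value than $B$; the sink column costs a factor $n$ in $\prod q_j$, and that loss is not recovered.

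For equality when all $q_j>0$, every $k_j$ must lie in $\{0,1\}$, both tangent bounds must be tight (forcing $K=n(n-1)$ and $H=n$), and $\tau>0$. Hence there are exactly $n(n-1)$ singleton columns, which forces $m\ge n(n-1)$. For a singleton column $e_i$, the summand of $W$ is $(1+1/(n-1))e_ie_i^{\T}$. So $W$ is diagonal with entries proportional to the multiplicities $c_i$ of $e_i$. The condition \eqref{eq:bipartite-profile-equality} then forces $W$ to be a scalar multiple of $I$, so all $c_i=n-1$. Conversely, this matrix satisfies \eqref{eq:bipartite-profile-equality}, so it attains $B$ whenever $m\ge n(n-1)$. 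Finally, $B=2n^{n-1}(n-1)^{n(n-1)}n^{m-n(n-1)}$ by rearranging powers.
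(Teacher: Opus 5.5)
\textbf{The gap is in Step~1.} The two tangent-line bounds do not combine to $\log\Phi\le\log B+\sum_j g(k_j)$. Carrying the constants through, they give
\[
\log\Phi\le(m+n-1)\log n+\log2-(n-1)-\tfrac12+\sum_j f(k_j).
\]
Your identity for $n(n-1)\lambda$ shows $\log B=(m+n-1)\log n+\log2-(n-1)-\tfrac12+n(n-1)\lambda$. So what you actually obtain is
\[
 \log\Phi\le\log B+\sum_j g(k_j)+\lambda\bigl(K-n(n-1)\bigr).
\]
You dropped the last term, and it is not harmless. We have $\lambda=\log(1-\tfrac1n)+\tfrac1n+\tfrac1{2n(n-1)}=\sum_{j\ge3}(\tfrac12-\tfrac1j)n^{-j}>0$, which is of order $1/(6n^3)$. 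Now take $m>n(n-1)$ and a profile with $n(n-1)+1$ singleton columns. Every $g(k_j)$ vanishes, and your chain only yields $\Phi\le e^{\lambda}B$. Concretely, for $n=2$, $m=3$, $C=[e_1,e_2,e_1]$ one has $\Phi=15/2<8=B$, but the chain gives only $8e^{\lambda}\approx8.47$.

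No choice of tangent points repairs this. Along profiles of $K$ singleton columns, $\Phi=n^mF(K)$ with $F(K)=\frac1n(1-\frac1n)^K(\frac K{n-1})^{n-1}(n+\frac K{n-1})$, and $\lambda=(\log F)'(n(n-1))$. Hence the real maximizer of $F$ lies strictly to the right of $n(n-1)$. Any linearization that is tight at the equality profile and valid for all real $K$ therefore has slope $\lambda>0$ along singleton profiles and exceeds $\log B$ for every $K>n(n-1)$. The sharpness at $n(n-1)$ is an integrality effect, and your argument never uses that $K$ is an integer.

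\textbf{The missing ingredient is a discrete supergradient in $K$.} Replace the tangent at $K/(n-1)=n$ by the claim that the concave function $\chi(K)=(n-1)\log K-(\tfrac1n-\lambda)K$ is maximized over the positive integers exactly at $K_0=n(n-1)$. By concavity it suffices to compare with $K_0\pm1$.
\begin{itemize}
\item Left: $-\log(1-1/K_0)\ge1/K_0$ gives $\chi(K_0)-\chi(K_0-1)\ge\lambda>0$.
\item Right: $(n-1)\log(1+1/K_0)<\tfrac1n-\lambda$ follows from $\log(1+z)<z-z^2/2+z^3/3$. It reduces to $\log(1-\tfrac1n)+\tfrac1n+\tfrac1{2n^2}+\tfrac1{3n^3}<0$, the same estimate the paper uses for $F_n(K_0+1)<F_n(K_0)$.
\end{itemize}
This changes the coefficient of $K$ from $\tfrac1n$ to $\tfrac1n-\lambda$. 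The per-column function then becomes exactly your $g$, and your displayed inequality becomes valid, with equality in the $K$-step only at $K_0$.

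\textbf{Step~2 is not the real obstacle.} With $y=n-k\in[1,n-1]$,
\[
g'(k)=-\tfrac{2y-1}{2y^2}+\tfrac1n-\lambda\le-\lambda,
\]
because the convex quadratic $2y^2-2ny+n$ equals $2-n\le0$ at both $y=1$ and $y=n-1$. Hence $g(k)<g(1)=0$ for $2\le k\le n-1$.

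\textbf{With this repair the rest goes through.} Your sink case works: tangent point $n$ together with $\log(1-k/n)+k/n\le0$ leaves slack. Your equality analysis also works.

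\textbf{How the repaired route compares with the paper's.} The paper first splits each column of size $k\ge2$ into $k$ singletons, proving via a recurrence that this strictly increases the bound. It then maximizes $F_n$ over the integers. You would instead do the integer step only on the $K$-factor and handle all column sizes at once through the monotonicity of $g$.
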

\begin{proof}
We may assume $\tau(D)>0$, since the bound is positive. Let $p=(n-1)/n$.

Assume first that every $q_j>0$. Then $K>0$, since $K=0$ makes
all $n\ge2$ vertices in $U$ sinks, contrary to \Cref{lem:bipartite-sink}.
By \Cref{lem:bipartite-determinant},
\[
 \frac{\tau(D)}{n^m}\le
 \frac1n\left(\frac K{n-1}\right)^{n-1}(n+H)
 \prod_j\left(1-\frac{k_j}{n}\right).
\]
To obtain an upper bound depending only on $K$, replace an entry $k$
with $2\le k<n$ in the list of column sizes by $k$ entries equal to $1$.
This preserves $K$ but changes the number of entries. Zero entries
contribute $1$ to the product and $0$ to $H$, so they do not affect
the right-hand side.

If $h$ is the contribution of the other columns to $H$, the ratio
of the new expression to the old one is
\[
 \frac{p^k}{1-k/n}\,
 \frac{n+h+k/(n-1)}{n+h+k/(n-k)}.
\]
Since $k/(n-1)<k/(n-k)$, the ratio increases with $h\ge0$.
It therefore suffices to prove the inequality at $h=0$:
\[
 p^k\left(n+\frac{k}{n-1}\right)
 >\left(1-\frac kn\right)\left(n+\frac{k}{n-k}\right).
\]
Let $x=1/n$. After division by $n$, the difference of the two sides is
$g_k=p^{k-1}(p+kx^2)-1+kxp$. Since $p=1-x$, we have
$g_1=p+x^2-1+xp=0$, and expansion gives
\[
 g_{k+1}=pg_k+x^2(p^k-1+kp).
\]
For $n\ge3$, $p\ge2/3$: the parenthesis is $2p-1>0$ at $k=1$,
and is positive for $k\ge2$ because $kp>1$. The recurrence gives
$g_k>0$ for $k\ge2$, so every replacement strictly increases the
bound. When $n=2$, all column sizes with $q_j>0$ are already zero or one.

Consequently,
\[
 \frac{\tau(D)}{n^m}\le F_n(K):=\frac{p^K}{n}\left(\frac K{n-1}\right)^{n-1}
              \left(n+\frac K{n-1}\right).
\]
We next maximize the upper bound $F_n(K)$ over the positive integers.
For real $K>0$,
\begin{align*}
 (\log F_n)'(K)&=\log p+\frac{n-1}{K}
                         +\frac1{n(n-1)+K},\\
 (\log F_n)''(K)&=-\frac{n-1}{K^2}
                         -\frac1{(n(n-1)+K)^2}<0.
\end{align*}
Thus $\log F_n$ is strictly concave. At $K_0=n(n-1)$,
\[
 (\log F_n)'(K_0)=\log(1-1/n)+\frac1n+\frac1{2n(n-1)}
 =\sum_{j\ge3}\left(\frac12-\frac1j\right)n^{-j}>0.
\]
using $\log(1-x)=-\sum_{j\ge1}x^j/j$ and
$x^2/(2(1-x))=\sum_{j\ge2}x^j/2$ at $x=1/n$.
Since $(\log F_n)'$ decreases and is positive at $K_0$, $F_n$
increases up to $K_0$. For the next integer, let $b=n-1$; then
\[
 \frac{F_n(K_0+1)}{F_n(K_0)}
 =p\left(1+\frac1{nb}\right)^b\left(1+\frac1{2nb}\right)<1.
\]
To verify the strict inequality, for $z>0$ use
$\log(1+z)<z-z^2/2+z^3/3$ and $\log(1+z)<z$.
For $f_1(z)=z-z^2/2+z^3/3-\log(1+z)$ and
$f_2(z)=z-\log(1+z)$, we have $f_1(0)=f_2(0)=0$ and
$f_1'(z)=z^3/(1+z)>0$, $f_2'(z)=z/(1+z)>0$ for $z>0$.
Applying these bounds at $1/(nb)$ and $1/(2nb)$ bounds the logarithm
of the ratio strictly from above by
\begin{align*}
 &\log p+\frac1n-\frac1{2n^2b}+\frac1{3n^3b^2}+\frac1{2nb}\\
 &=\log p+\frac1n+\frac1{2n^2}+\frac1{3n^3b^2}
 \le\log p+\frac1n+\frac1{2n^2}+\frac1{3n^3}<0.
\end{align*}
Here $b\ge1$, and the last inequality follows from the negative
terms remaining in the series for $\log(1-1/n)$ after the first
three cancel. Strict concavity makes the integer increments of
$\log F_n$ decrease, so all increments after $K_0$ are negative.
Thus $K_0$ is the unique positive integer maximizing $F_n$, and
$\tau(D)/n^m\le F_n(K_0)=2n^{n-1}p^{n(n-1)}$.

It remains to consider an orientation with a sink in $V$.
The sink is unique because $\tau(D)>0$. Bernoulli's inequality
$p^k\ge1-k/n$, for integers $0\le k<n$, applied to the $m-1$
remaining columns in \Cref{lem:bipartite-sink} gives
\[
 \frac{\tau(D)}{n^m}\le G_n(K'):=\frac{p^{K'}}n
                       \left(1+\frac{K'}{n-1}\right)^{n-1}.
\]
Direct cancellation gives
\[
 \frac{F_n(K'+n-1)}{G_n(K')}
 =\left(n+1+\frac{K'}{n-1}\right)p^{n-1}>1,
\]
since Bernoulli's inequality gives $p^{n-1}\ge1-(n-1)/n=1/n$
and hence a ratio at least $(n+1+K'/(n-1))/n>1$.
As $K'+n-1$ is a positive integer,
\[
 G_n(K')<F_n(K'+n-1)\le F_n(K_0).
\]
The bound is therefore strict in this case.

The preceding comparisons also determine all equality cases.
Equality requires singleton or zero columns, $K=K_0$, and
\eqref{eq:bipartite-profile-equality}, by the column-size comparison,
the maximization of $F_n$ over the positive integers, and the determinant bound.
Let $h_i$ be the number of copies of $e_i$. A singleton contributes
$ne_ie_i^{\T}/(n-1)$ to $W$, so
$W=n\diag(h_i)/(n-1)$ and $H=K_0/(n-1)=n$.
Thus \eqref{eq:bipartite-profile-equality} gives $W=nP+J_n=nI_n$,
so $h_i=n-1$ for each $i$. These $K_0$ columns, followed by zero
columns, exist exactly when $m\ge K_0$.

Conversely, the prescribed columns give $W=nI_n$,
$\prod_jq_j=(n-1)^{K_0}n^{m-K_0}$ and
$\det(nI_n+J_n)=2n^n$ from its eigenvalues $2n$ and $n$
(the latter with multiplicity $n-1$). Equation~\eqref{eq:bipartite-schur} gives
\[
 \tau(D)=2n^{n-1}(n-1)^{K_0}n^{m-K_0}
       =2n^{n+m-1}p^{K_0}.
\]
This unique maximizing column multiset gives uniqueness up to
permutations within the two parts.
\end{proof}

\subsection{Sharp bounds and incidence designs}
When the average outdegree $d=nm/(n+m)$ is integral, an upper bound
obtained from the concavity of $\log\det$ has equality conditions
characterized by incidence designs.
A design with parameters $(v,b,k,r,\lambda)$ consists of $b$ indexed
$k$-subsets of a $v$-point set, each point occurring in $r$ blocks
and each pair in $\lambda$ blocks. Repeated blocks correspond to
distinct vertices of $K_{n,m}$; we allow $k=1$ and $\lambda=0$.

\begin{theorem}\label{thm:bipartite-integral}
Suppose $d=nm/(n+m)$ is an integer. Every orientation $D$ of $K_{n,m}$
satisfies
\[
 \tau(D)\le 2d^{n+m-1}\left(\frac n{n-1}\right)^{n-1}.
\]
Equality holds if and only if $C$ is the incidence matrix of a design
with parameters
\begin{equation}\label{eq:bipartite-design-parameters}
 (v,b,k,r,\lambda)=\left(n,m,n-d,d,\frac{d(n-d-1)}{n-1}\right).
\end{equation}
Equivalently,
\[
 C\one=d\one,\qquad C^{\T}\one=(n-d)\one,\qquad
 CC^{\T}=\frac{d^2}{n-1}I_n+\frac{d(n-d-1)}{n-1}J_n.
\]
\end{theorem}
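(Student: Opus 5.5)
We will deduce the theorem from \Cref{lem:bipartite-determinant} and \Cref{lem:bipartite-sink}. We may assume $\tau(D)>0$. First we handle orientations with every $q_j>0$. Here we will bound $\Phi(q)$ using only $K$ and a concavity argument in the column sizes. Put $k_j=n-q_j$. Then $K=\sum_j(n-q_j)$ and $H=\sum_j(n-q_j)/q_j$. Write
\[
 \log\Phi(q)=\sum_j\log q_j+(n-1)\log\frac{nm-\sum_jq_j}{n-1}+\log\Bigl(n-m+n\sum_j q_j^{-1}\Bigr)-\log n .
\]
Maximizing this over real $q_j\in(0,n]$ would be the natural route, but the last term is convex in the $q_j$. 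So we instead fix $Q_s=\sum_jq_j$. By AM--GM, $\prod_jq_j\le(Q_s/m)^m$. Write $\bar q=Q_s/m$.

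The product gains nothing from spreading the $q_j$, but $\sum_j 1/q_j$ does. We will therefore handle both terms together. Since $\log q-\log\bar q\le (q-\bar q)/\bar q-(q-\bar q)^2/(2\max(q,\bar q)^2)$, the second-order loss in the product should dominate the gain in $\sum_j 1/q_j$, which enters only through one logarithm divided by a quantity of order $m$. Checking this inequality is the main obstacle.

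If that direct route fails, we will use a tangent-line trick like the one for $g_n$ in \Cref{prop:tour-upper}. We look for a one-variable function
\[
 \phi(q)=\log q+\lambda\,\frac{n-q}{q}-\alpha q
\]
with suitable multipliers $\lambda,\alpha$, tuned so that $q=d$ is its maximum over integers $1\le q\le n$. Since $\log(n+H)$ is concave in $H$, it is bounded by its tangent at $H=md/(n-d)$; we bound $(n-1)\log K$ by its tangent in the same way. Summing the one-variable inequalities then reduces everything to $\phi(q_j)\le\phi(d)$. The multipliers should follow from stationarity at the regular profile $q_j\equiv d$. There $K=m(n-d)=nd$, because $nm=d(n+m)$, and $H=m(n-d)/d=n$. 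The resulting value is
\[
 \frac{d^m}{n}\Bigl(\frac{nd}{n-1}\Bigr)^{n-1}2n=2d^{n+m-1}\Bigl(\frac n{n-1}\Bigr)^{n-1}.
\]
Equality in all tangent steps forces $q_j=d$ for every $j$, so $C^{\T}\one=(n-d)\one$ and $K=nd$. It also forces equality in \eqref{eq:bipartite-profile-equality}.

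Next we translate the equality conditions. With $q_j\equiv d$ we get $W=R+CC^{\T}/d$, and \eqref{eq:bipartite-profile-equality} requires $W=\frac{nd}{n-1}P+J_n$. Write $\rho$ for the common diagonal value of $CC^{\T}$. Since $\rho=r_i$, comparing diagonal entries of $W$ gives $r_i(1+1/d)=\frac{nd}{n-1}\cdot\frac{n-1}{n}+1=d+1$. Hence $r_i=d$, so $C\one=d\one$. Comparing off-diagonal entries gives $(CC^{\T})_{ii'}/d=1-\frac{d}{n-1}$, so the pair concurrence is $\lambda=d(n-d-1)/(n-1)$. These are exactly the displayed matrix conditions and the parameters \eqref{eq:bipartite-design-parameters}. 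Conversely, a design with these parameters gives $q_j=d$ and this $W$. Then \eqref{eq:bipartite-schur} and the eigenvalues $\frac{nd}{n-1}$ (multiplicity $n-1$) and $2n$ give equality.

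Finally, for an orientation with a sink, \Cref{lem:bipartite-sink} gives $\tau(D)\le\prod_{j<m}q_j\,(1+K'/(n-1))^{n-1}$ with $K'=\sum_{j<m}(n-q_j)$. Applying AM--GM to the $q_j$ and maximizing over $K'$ will give a strictly smaller bound, in the same way as the comparison of $G_n$ with $F_n$ in \Cref{thm:bipartite-large}. This is because the factor $n+H$, which is close to $2n$ at the optimum, is replaced by roughly $1$. We expect to check this strict inequality by an explicit elementary estimate, uniformly in the integral range.
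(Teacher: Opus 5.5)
Your tangent-line route is the right one and is, in substance, the paper's. Compose $\tau(D)\le\Phi(q)$ from \Cref{lem:bipartite-determinant} with two tangents: that of $(n-1)\log K$ at $K=nd$, and that of $\log(n+H)$ at $H=m(n-d)/d=n$ (your ``$H=md/(n-d)$'' is a slip). This yields exactly the paper's reduction
\[
 \log\frac{\tau(D)}{2c_nd^{h-1}}\le\sum_{j=1}^m\bigl(f(q_j)-f(d)\bigr),\qquad
 f(x)=\log x-\frac{n-1}{nd}\,x+\frac1{2x},
\]
where $h=n+m$, $c_n=(n/(n-1))^{n-1}$, and your $\phi$ equals $f-\frac1{2n}$. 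The paper reaches this in one step, from $\log\det Z-\log\det Z_0\le\tr(Z_0^{-1}Z)-n$ with $Z=W+J_n$ and $Z_0=\frac{nd}{n-1}P+2J_n$; this is the same tangent bound.

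The gap is the step you treat as automatic: that $d$ is the unique maximizer of $f$ on $\{1,\ldots,n\}$. Nothing can be ``tuned'' here, because the multipliers are forced by the tangent points. Nor is $q\equiv d$ stationary: $f'(d)=(2d-n)/(2nd^2)>0$ unless $n=2d$. So in general the real maximizer of $f$ lies strictly to the right of $d$, and integer maximality has to be proved. This needs $n/2\le d\le n-1$, which you never derive; it follows from $m\ge n$ and $d<n$.

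With that, $f''(x)=(1-x)/x^3\le0$ on $[1,\infty)$ and $f'(d)\ge0$ give $f(q)<f(d)$ for $q<d$. For $q>d$, concavity reduces the claim to $f(d+1)<f(d)$. Here $\log(1+1/d)<1/d$ is too weak: it only gives $f(d+1)-f(d)<\frac{2(d+1)-n}{2nd(d+1)}$, which is positive because $2d\ge n$. You need the second-order estimate $\log(1+x)<x-\frac{x^2}{2(1+x)}$. It gives $f(d+1)-f(d)<\frac1{nd}-\frac1{d(d+1)}\le0$, using $d+1\le n$. Without this step the bound itself is not proved.

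The sink case is also only a heuristic as written, though it can be completed along your lines. Apply AM--GM to all $h-2$ factors in \Cref{lem:bipartite-sink}; this gives $\tau(D)\le\bigl((nm-1)/(h-2)\bigr)^{h-2}<e^{2-1/d}d^{h-2}$. That is below $4d^{h-1}\le2c_nd^{h-1}$, checking $d=1$ and $d\ge2$ separately.

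The paper's argument is simpler. The sink is the only possible root, so
\[
\tau(D)\le\prod_{v\ne\text{sink}}d_D^+(v)\le\bigl(nm/(h-1)\bigr)^{h-1}<ed^{h-1}<2c_nd^{h-1}.
\]

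Your abandoned first route (fixing $\sum_jq_j$) is unnecessary. Your translation of the equality case into $r_i=d$ and concurrence $d(n-d-1)/(n-1)$ is correct, and so is the converse. The one flaw there is calling the diagonal of $CC^{\T}$ ``common'' before $r_i=d$ has been shown.
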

\begin{proof}
Let $h=n+m$ and $c_n=(n/(n-1))^{n-1}$. Since $m\ge n$ and
$d=nm/(n+m)$ is an integer,
\[
 \frac n2\le d\le n-1,\qquad
 m=\frac{nd}{n-d}\le n(n-1),\qquad m(n-d)=nd.
\]
We may assume $\tau(D)>0$.

If $D$ has a sink, \Cref{lem:bipartite-sink} makes it the unique root.
Each arborescence chooses one outgoing arc at every other vertex;
these $h-1$ outdegrees sum to $nm=hd$. The arithmetic--geometric
mean inequality gives
\[
 \tau(D)\le\prod_{v\ne\mathrm{sink}}d_D^+(v)
 \le\left(\frac{nm}{h-1}\right)^{h-1}
 =d^{h-1}\left(1+\frac1{h-1}\right)^{h-1}
 <e d^{h-1}.
\]
The last inequality follows from $\log(1+x)<x$.
Bernoulli's inequality gives $c_n=(1+1/(n-1))^{n-1}\ge2$, so $e<2c_n$ and the
asserted bound is strict.

Otherwise all outdegrees are positive, so
\Cref{lem:bipartite-determinant} applies. Let
\[
 Z=W+J_n,\qquad \alpha=\frac{nd}{n-1},\qquad
 Z_0=\alpha P+2J_n.
\]
By \eqref{eq:bipartite-schur}, the positive semidefinite matrix $Z$
has positive determinant and is therefore positive definite.
The eigenvalues of $Z_0$ are $\alpha$ on $\one^\perp$ and $2n$
on the span of $\one$, giving
\[
 Z_0^{-1}=\frac{n-1}{nd}P+\frac{J_n}{2n^2},\qquad
 \det Z_0=2n\left(\frac{nd}{n-1}\right)^{n-1},\qquad
 \frac{d^m}{n}\det Z_0=2c_nd^{h-1}.
\]
Let $\rho_1,\ldots,\rho_n$ be the positive eigenvalues of
$Z_0^{-1/2}ZZ_0^{-1/2}$. The inequality $\log x\le x-1$ gives
\[
 \log\det Z-\log\det Z_0
 =\sum_{i=1}^n\log\rho_i
 \le\sum_{i=1}^n(\rho_i-1)=\tr(Z_0^{-1}Z)-n.
\]
Equality means every $\rho_i=1$, equivalently $Z=Z_0$ by symmetry.
Using $PJ_n=0$ and \eqref{eq:bipartite-traces},
\begin{align*}
 \tr(Z_0^{-1}Z)
 &=\frac{n-1}{nd}\tr(PW)
   +\frac1{2n^2}\tr(J_nW+J_n^2)\\
 &=\frac{n-1}{nd}K+\frac{H}{2n}+\frac12.
\end{align*}
At $q_j=d$, we have $K=m(n-d)=nd$ and $H=m(n-d)/d=n$, so
\[
 \tr(Z_0^{-1}Z)-n
 =\frac{n-1}{nd}(K-nd)+\frac{H-n}{2n}.
\]
The definitions of $K,H$ and $m(n-d)=nd$ also give
\[
 K-nd=\sum_{j=1}^m(d-q_j),\qquad
 H-n=n\sum_{j=1}^m\left(\frac1{q_j}-\frac1d\right).
\]
Substitution and \eqref{eq:bipartite-schur} yield
\begin{align*}
 \log\frac{\tau(D)}{2c_nd^{h-1}}
 &=\sum_{j=1}^m\log\frac{q_j}{d}
      +\log\det Z-\log\det Z_0\\
 &\le\sum_{j=1}^m\left[
   \log\frac{q_j}{d}-\frac{n-1}{nd}(q_j-d)
             +\frac12\left(\frac1{q_j}-\frac1d\right)\right]\\
 &=\sum_{j=1}^m\bigl(f(q_j)-f(d)\bigr),
 \qquad f(x)=\log x-\frac{n-1}{nd}x+\frac1{2x}.
\end{align*}

To maximize $f$ on $\{1,\ldots,n\}$, observe that
\[
 f''(x)=\frac{1-x}{x^3}\le0\quad(x\ge1),\qquad
 f'(d)=\frac{2d-n}{2nd^2}\ge0.
\]
For $d>1$, strict decrease of $f'$ gives $f'(x)>f'(d)\ge0$
when $1\le x<d$, hence $f(q)<f(d)$ for integers $q<d$;
there are none when $d=1$. On the other side, use
\[
 \log(1+x)<x-\frac{x^2}{2(1+x)}\qquad(x>0).
\]
Indeed, put $g(x)=x-x^2/(2(1+x))-\log(1+x)$. Then $g(0)=0$ and
$g'(x)=x^2/(2(1+x)^2)>0$ for $x>0$. At $x=1/d$, this gives
\begin{align*}
 f(d+1)-f(d)
 &=\log(1+1/d)-\frac1d+\frac1{nd}-\frac1{2d(d+1)}\\
 &<\frac1{nd}-\frac1{d(d+1)}\le0,
\end{align*}
since $d+1\le n$. Concavity makes subsequent integer increments
nonincreasing. Thus $d$ is the unique maximizer of $f$ over $\{1,\ldots,n\}$, and
$f(q_j)-f(d)\le0$ proves the bound.

Equality requires no sink, $q_j=d$ for every $j$, and $Z=Z_0$.
Hence $C^{\T}\one=(n-d)\one$, $Q=dI_m$, and comparison of
diagonal entries gives
\[
 \left(1+\frac1d\right)r_i+1=d+2,
 \qquad r_i=d.
\]
For $i\ne j$, let $c_{ij}=(CC^{\T})_{ij}$. Off the diagonal,
\[
 1+\frac{c_{ij}}d=2-\frac d{n-1},\qquad
 c_{ij}=\frac{d(n-d-1)}{n-1}.
\]
These are the block size, replication and concurrence in
\eqref{eq:bipartite-design-parameters}. Since
$d-d(n-d-1)/(n-1)=d^2/(n-1)$, they also give the stated formula
for $CC^{\T}$.

Conversely, those incidence identities give $R=dI_n$, $Q=dI_m$ and
\[
 W+J_n=dI_n+\frac1dCC^{\T}+J_n
       =\frac{nd}{n-1}P+2J_n=Z_0.
\]
Equation~\eqref{eq:bipartite-schur} now gives
$\tau(D)=d^m\det Z_0/n=2c_nd^{h-1}$, proving the converse.
\end{proof}

Whenever the design exists, \Cref{thm:bipartite-integral} determines
the maximum and all maximizers. At $d=n-1$ it recovers
\Cref{thm:bipartite-large}; the next family attains the bound
strictly below that boundary.
\begin{corollary}\label{cor:bipartite-mcfarland}
For every prime power $s\ge2$, let
\[
 L=s^2+s-1,\qquad n=sL,\qquad m=(s^2-1)L,\qquad d=s(s^2-1).
\]
Then $n<m<n(n-1)$, and equality holds in the upper bound of
\Cref{thm:bipartite-integral}. The maximizing orientations are exactly
those whose matrices $C$ are incidence matrices of designs with
parameters $(n,m,s^2,d,s(s-1))$.
\end{corollary}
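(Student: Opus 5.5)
The plan is to verify the arithmetic hypotheses of \Cref{thm:bipartite-integral}, identify the design parameters \eqref{eq:bipartite-design-parameters} in this case, and then build such a design. By the ``if and only if'' in \Cref{thm:bipartite-integral}, existence of one design gives equality, and the characterization of maximizers is then exactly the characterization in that theorem.

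\emph{Arithmetic.} First $n+m=L(s+s^2-1)=L^2$ and $nm=s(s^2-1)L^2$, so $nm/(n+m)=s(s^2-1)=d$ is an integer. The inequality $n<m$ reduces to $s<s^2-1$, which holds for $s\ge2$. The inequality $m<n(n-1)$ reduces to $s^2-1<s(sL-1)=s^2L-s$, which is clear since $L\ge5$. Next, $k=n-d=s^3+s^2-s-s^3+s=s^2$ and $r=d$. For the concurrence, note the factorization $n-1=s^3+s^2-s-1=(s+1)(s^2-1)$. It gives
\[
 \lambda=\frac{d(n-d-1)}{n-1}=\frac{s(s^2-1)^2}{(s+1)(s^2-1)}=s(s-1).
\]
So the required design has parameters $(n,m,s^2,d,s(s-1))$, as asserted.

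\emph{Construction.} I would take the McFarland difference set \cite{mcfarland1973} with $q=s$ and exponent $1$. It gives a symmetric design with parameters $(v,k,\lambda)=(s^2(s+2),\,s(s+1),\,s)$, and I would form its residual design with respect to one block $B_0$. Because blocks of a symmetric design meet in $\lambda$ points, the residual design has the following parameters:
\begin{itemize}[leftmargin=2em]
\item $v-k=s^3+s^2-s=sL=n$ points;
\item $v-1=s^3+2s^2-1=(s+1)L$ blocks;
\item block size $k-\lambda=s^2$;
\item replication $k=s(s+1)$;
\item concurrence $\lambda=s$.
\end{itemize}
The identity $s^3+2s^2-1=(s+1)(s^2+s-1)$ is what makes the block count a multiple of $L$. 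Taking each block of this residual design $s-1$ times multiplies the block count, replication and concurrence by $s-1$ and leaves the point set and block size unchanged. This yields the parameters
\[
 \bigl(sL,\;(s^2-1)L,\;s^2,\;s(s^2-1),\;s(s-1)\bigr)=(n,m,s^2,d,s(s-1)).
\]
Repeated blocks are explicitly allowed in the definition preceding \Cref{thm:bipartite-integral}, so this is an admissible design. Its incidence matrix, with rows indexed by $U$ and columns by $V$, defines the desired orientation.

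\emph{Main obstacle.} The only nonroutine step is matching parameters: recognizing that the residual of the McFarland design, rather than the design itself, has the correct point set, and that $s-1$ copies of it hit the block count exactly. I would double-check this step by verifying the standard relations $bk=vr$ and $\lambda(v-1)=r(k-1)$ for the final parameters. The remaining steps are direct substitutions.
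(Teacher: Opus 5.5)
Your proposal is correct and follows essentially the same route as the paper: the symmetric $(s^2(s+2),s(s+1),s)$ McFarland design, its residual with respect to one block, $s-1$ repetitions of every block, and then the ``if and only if'' of \Cref{thm:bipartite-integral}. The differences are cosmetic. The paper verifies the difference-set property directly for the explicit set $\bigcup_{i\ne0}(H_i\times\{i\})\subseteq\FF_s^2\times\ZZ/(s+2)\ZZ$ rather than citing it, and it checks $m<n(n-1)$ through the equivalent condition $d<n-1$.
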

\begin{proof}
We use McFarland's two-dimensional difference-set construction
\cite{mcfarland1973}. Let $V_0=\FF_s^2$ and $Z=\ZZ/(s+2)\ZZ$.
Index the $(s^2-1)/(s-1)=s+1$ one-dimensional subspaces of $V_0$
by the nonzero elements $i$ of $Z$, denoting them by $H_i$. Let
\[
 G=V_0\times Z,\qquad
 \mathcal D=\bigcup_{i\ne0}(H_i\times\{i\}).
\]
Disjointness in the second coordinate gives
$|G|=s^2(s+2)$ and $|\mathcal D|=s(s+1)$.

Every nonzero element of $G$ has $s$ representations as an ordered
difference of elements of $\mathcal D$. For $(v,0)$ with $v\ne0$,
the second coordinates agree, and the unique subspace containing $v$
contributes the $s$ pairs $(y+v,y)$ with $y\in H_i$; the others
contribute none. For $(v,z)$ with $z\ne0$, the labels satisfy
$i-j=z$ and $i,j\ne0$. Excluding $j=0,-z$ leaves $s$ pairs
$(i,j)$ with $i\ne j$. Since $H_i\cap H_j=\{0\}$ and
$H_i+H_j=V_0$, each pair gives a unique solution of $x-y=v$
with $x\in H_i$, $y\in H_j$, again yielding $s$ representations.

The blocks $\mathcal D+g$, indexed by $g\in G$, have size $s(s+1)$.
Each point $x$ lies in $s(s+1)$ blocks, one for each $g=x-d$ with
$d\in\mathcal D$. For distinct points $x,y$, the blocks containing
both correspond to pairs $d_1-d_2=x-y$ in $\mathcal D$, so there
are $s$. The same difference count gives intersection size $s$ for
blocks with different labels. Thus the design has parameters
\[
 (v,b,k,r,\lambda)
 =\bigl(s^2(s+2),s^2(s+2),s(s+1),s(s+1),s\bigr).
\]

Delete the block $\mathcal D$ and restrict the others to
$G\setminus\mathcal D$. Each loses $s$ points, leaving size
$s(s+1)-s=s^2$. Points and pairs outside $\mathcal D$ retain
replication $s(s+1)$ and concurrence $s$, since they did not occur
in the deleted block. With $L=s^2+s-1$,
\[
 v'=s^2(s+2)-s(s+1)=sL,\qquad b'=s^2(s+2)-1=(s+1)L.
\]
The residual parameters are $(sL,(s+1)L,s^2,s(s+1),s)$.
Repeating every block $s-1$ times multiplies $b',r,\lambda$ by
$s-1$, giving
\[
 (n,m,k,r,\lambda)
 =\bigl(sL,(s^2-1)L,s^2,s(s^2-1),s(s-1)\bigr).
\]

Since $s+(s^2-1)=L$,
\[
 n+m=L^2,\qquad nm=s(s^2-1)L^2,\qquad
 \frac{nm}{n+m}=s(s^2-1)=d.
\]
Also
\[
 n-d=s^2,\qquad
 n-1=(s-1)(s+1)^2,
 \qquad
 \frac{d(n-d-1)}{n-1}
 =\frac{s(s^2-1)^2}{(s-1)(s+1)^2}=s(s-1).
\]
These are the parameters in \eqref{eq:bipartite-design-parameters},
so \Cref{thm:bipartite-integral} gives attainment and all maximizers.
Finally, $m/n=(s^2-1)/s>1$ for $s\ge2$. Since $d=n-s^2<n-1$
and $m=nd/(n-d)$,
\[
 m<n(n-1)\quad\Longleftrightarrow\quad
 d<(n-1)(n-d)\quad\Longleftrightarrow\quad d<n-1.
\]
\end{proof}

\subsection{Spectral bounds for Eulerian orientations}
Assume that $n$ and $m$ are even. For an Eulerian orientation, let
\[
 X=2C-J_{n,m},\qquad G=XX^{\T},\qquad
 A_{n,m}=\frac{m^{n-1}n^{m-1}}{2^{n+m-1}}.
\]
Eulerian balance gives row sums $m/2$ and column sums $n/2$ in $C$,
so $X\one_m=0$ and $X^{\T}\one_n=0$. Thus $G\one=0$,
each diagonal entry of $G$ is the sum of $m$ squared signs, namely
$m$, and $\tr G=nm$. The determinant identity specializes to
\begin{equation}\label{eq:eulerian-bipartite-determinant}
 a(D)=A_{n,m}\det\left(I_n+\frac{G}{nm}\right).
\end{equation}
Indeed, $R=(m/2)I_n$, $Q=(n/2)I_m$, and expansion of $XX^{\T}$
using $CJ_{m,n}=(m/2)J_n$ and $J_{n,m}J_{m,n}=mJ_n$ gives
$G=4CC^{\T}-mJ_n$. Substitution in \Cref{lem:bipartite-determinant} yields
\[
 W+J_n=\frac m2I_n+\frac2nCC^{\T}+J_n
       =\frac m2\left(I_n+\frac{G}{nm}\right)
          +\left(1+\frac m{2n}\right)J_n.
\]
On the span of $\one$ this has eigenvalue $m/2+n+m/2=n+m$;
for $x\in\one^\perp$, we have $J_nx=\one(\one^{\T}x)=0$.
Since $I_n+G/(nm)$ has
eigenvalue one on the span of $\one$,
\[
 \det(W+J_n)=(n+m)\left(\frac m2\right)^{n-1}
                    \det\left(I_n+\frac G{nm}\right).
\]
Together with $\prod_jq_j=(n/2)^m$, \eqref{eq:bipartite-schur} and
$\tau(D)=(n+m)a(D)$ from \Cref{lem:root-independence}, this gives
\[
 a(D)=\frac1n\left(\frac n2\right)^m\left(\frac m2\right)^{n-1}
               \det\left(I_n+\frac G{nm}\right),
\]
with coefficient $A_{n,m}$, proving \eqref{eq:eulerian-bipartite-determinant}.
For $n=2$, every column of $X$ has opposite signs, so
$G=m\left(\begin{smallmatrix}1&-1\\-1&1\end{smallmatrix}\right)$,
with eigenvalues zero and $2m$. The determinant in
\eqref{eq:eulerian-bipartite-determinant} is therefore two;
as $A_{2,m}=m/4$, every Eulerian orientation has $a(D)=m/2$.

For $n\ge4$, the next lemma bounds the product over the $n-1$
eigenvalues on $\one^\perp$ using their sum and variance.
\begin{lemma}\label{lem:eulerian-moment-product}
For $p\ge2$ and $0\le \nu\le(p-1)/p$, let
\[
 \begin{gathered}
 \alpha_p(\nu)=\frac1p-\sqrt{\frac{\nu}{p(p-1)}},\qquad
 \beta_p(\nu)=\frac1p+\sqrt{\frac{(p-1)\nu}{p}},\\
 \mathcal E_p(\nu)=(1+\alpha_p(\nu))^{p-1}(1+\beta_p(\nu)).
 \end{gathered}
\]
If $z_1,\ldots,z_p\ge0$, $\sum_i z_i=1$ and
$\nu=\sum_i(z_i-1/p)^2$, then
\[
 \prod_{i=1}^p(1+z_i)\le\mathcal E_p(\nu).
\]
For $\nu>0$, equality holds if and only if the multiset of the $z_i$
consists of one copy of $\beta_p(\nu)$ and $p-1$ copies of $\alpha_p(\nu)$.
For $\nu=0$, all $z_i=1/p$. The function $\mathcal E_p$ is strictly
decreasing on $[0,(p-1)/p]$.
\end{lemma}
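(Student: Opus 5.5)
The plan is to maximize $\sum_i\log(1+z_i)$ over the compact set
\[
 \Sigma_\nu=\Bigl\{z\in[0,\infty)^p:\ \sum_i z_i=1,\ \sum_i z_i^2=\nu+\tfrac1p\Bigr\},
\]
which is the intersection of the simplex with a sphere about its barycenter. When $\nu=0$ the set is the single point $z_i=1/p$, and the product equals $(1+1/p)^p=\mathcal E_p(0)$. So assume $\nu>0$. A maximizer exists by compactness. The argument splits into two cases, according to whether the maximizer has a zero coordinate.

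\textbf{Interior maximizers.} If every $z_i>0$, the Lagrange conditions give
\[
 \frac1{1+z_i}=\lambda+2\kappa z_i
\]
for each $i$. Since the map $z\mapsto(1+z)^{-1}-2\kappa z$ is monotone-convex, this equation has at most two positive roots. Hence the coordinates take at most two distinct values: $a$ with multiplicity $p-j$, and $b>a$ with multiplicity $j$. The two constraints then determine $a,b$ in terms of $j$:
\[
 a=\frac1p-\sqrt{\frac{j\nu}{p(p-j)}},\qquad b=\frac1p+\sqrt{\frac{(p-j)\nu}{pj}}.
\]
It remains to compare the values $E_j=(1+a)^{p-j}(1+b)^j$ over $j$ and show that $j=1$ wins. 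I would try first to exploit that $\log(1+z)$ is a function whose third derivative is positive. For fixed first and second moments, a standard argument shows that sums of a function with $f'''>0$ are maximized by configurations with the larger value taken once, i.e.\ the extreme with one large and $p-1$ small values. Concretely, one can compare $E_j$ and $E_1$ through a two-coordinate perturbation. Moving mass within a pair while preserving its sum and square-sum is impossible unless the pair is fixed. So instead use three coordinates $b,b,a$: the one-parameter curve preserving both the sum and the square-sum, parametrized to separate the two equal $b$'s, increases $\sum f$ when $f'''>0$. This is the classical Schur-type argument for third-order monotonicity (along the curve, the derivative of $\sum f(z_i)$ has the sign of $f'''$ times a Vandermonde factor). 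It shows that any configuration with a repeated larger value is not a maximizer, forcing $j=1$ and the value $\mathcal E_p(\nu)$. This step is the main obstacle and must be written carefully, including the strictness that yields the equality case.

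\textbf{Boundary maximizers.} If some coordinates vanish, I restrict to the $p'<p$ positive ones. The same Lagrange argument on that face applies; the $f'''>0$ argument above then shows that a maximizer has at most one value repeated with multiplicity, the repeated value being the smaller one. The three-point move also applies when the smaller value is $0$: starting from $0,0,b$ or $0,a,b$, one can move along the curve while keeping coordinates nonnegative so as to increase $\sum f$. The only way such a move could be blocked is if it pushes a zero coordinate negative; I expect the sign of the curve's tangent to show that zeros are pushed upward. Hence a maximizer has no zero unless $\alpha_p(\nu)=0$, which happens at $\nu=(p-1)/p$, and in that case it coincides with the stated configuration anyway.

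\textbf{Monotonicity of $\mathcal E_p$.} Finally, I will prove that $\mathcal E_p$ is strictly decreasing by substituting $s=\sqrt{\nu}$ and differentiating $\log\mathcal E_p$. This gives
\[
 \frac{d}{ds}\log\mathcal E_p=\sqrt{\frac{p-1}{p}}\left(\frac1{1+\beta}-\frac1{1+\alpha}\right),
\]
since $\alpha'=-(p(p-1))^{-1/2}$ and $\beta'=((p-1)/p)^{1/2}$, so $(p-1)\alpha'=-\beta'$. The derivative is negative because $\beta>\alpha$ for $s>0$.
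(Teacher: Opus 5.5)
Your route differs from the paper's and can be completed, but one claim in the boundary paragraph is false. It is not needed, so it only has to be deleted.

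\textbf{Comparison with the paper.} The paper never locates a maximizer. For $f(x)=\log(1+x)$ it takes the quadratic $q$ with $q(\alpha)=f(\alpha)$, $q'(\alpha)=f'(\alpha)$, $q(\beta)=f(\beta)$. The interpolation remainder $f(x)-q(x)=\tfrac16\,f'''(\xi)(x-\alpha)^2(x-\beta)$ is $\le0$ for $0\le x\le\beta$, and Cauchy--Schwarz gives $z_j\le\beta$ for every $j$. Since $(z_i)$ and $(\alpha,\dots,\alpha,\beta)$ have the same first two moments, $\sum_i f(z_i)\le\sum_i q(z_i)=(p-1)q(\alpha)+q(\beta)=\log\mathcal E_p(\nu)$. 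Equality forces every $z_i$ to be a node, and counting gives exactly one $\beta$. This needs no compactness, multipliers or faces, but the extremal configuration must be guessed in advance.

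Your argument uses the same input, $f'''>0$, through the identity $\sum_i f'(x_i)t_i=-\tfrac12 f'''(\xi)(x_1-x_2)(x_2-x_3)(x_3-x_1)$. Here $t=(x_2-x_3,x_3-x_1,x_1-x_2)$ is the tangent to the moment-preserving circle. This route discovers the configuration rather than presupposing it. Two remarks on carrying it out:
\begin{itemize}
\item The identity alone already rules out three distinct positive coordinates in a maximizer. So you can drop the Lagrange step and its constraint qualification.
\item At $(b,b,a)$ the first derivative vanishes, so you must argue as you indicate. Nearby points of the circle have $x_1\ne x_2$ with $x_3$ smallest, and there the derivative in the direction increasing $|x_1-x_2|$ is positive. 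Also, $a$ is the minimum of the third coordinate on that circle, so the move stays feasible even when $a=0$.
\end{itemize}
For $p=2$ the feasible set is just $\{(\alpha,\beta),(\beta,\alpha)\}$, so no move is needed.

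\textbf{The false claim.} You cannot improve from a triple $(0,0,b)$. On its circle, $x_1x_2=x_3(x_3-b)<0$ whenever $0<x_3<b$, so every small move makes a coordinate negative. Moreover, $(a,a,b)$ with $a<b$ is a strict local maximum of $\sum f$ along its circle. At $\nu=(p-1)/p$ the point $(1,0,\dots,0)$ is the only feasible point, so your claim would contradict the exception you state in the next sentence.

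The triples that actually matter are these:
\begin{itemize}
\item $(b,b,0)$, covered by the $(b,b,a)$ move with $a=0$;
\item $(b,a,0)$ with $b>a>0$, where the tangent $(a,-b,b-a)$ raises the zero and the derivative is $\tfrac12 f'''(\xi)(b-a)ab>0$.
\end{itemize}
Together they show that a maximizer with a zero coordinate has exactly one positive coordinate. It is therefore $(1,0,\dots,0)$, with $\alpha_p(\nu)=0$.

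Your monotonicity computation is correct and coincides with the paper's, up to the normalization of $s$.
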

To maintain the flow of the main text, the proof is given in \Cref{app:bipartite-moment-product}.

Integer common-neighbor counts give a lower bound on the spectral
variance, strengthening the product bound as follows.
\begin{theorem}\label{thm:eulerian-bipartite-bound}
Let $D$ be an Eulerian orientation of $K_{n,m}$, where $n,m$ are even
and $4\le n\le m$. Let
\[
 \lambda=\frac{m(n-2)}{4(n-1)},\qquad
 t=\lambda-\lfloor\lambda\rfloor,\qquad
 \nu_0=\frac{16(n-1)t(1-t)}{nm^2}.
\]
Then
\begin{equation}\label{eq:eulerian-moment-bound}
 \begin{aligned}
 a(D)&\le B_E(n,m):=A_{n,m}\mathcal E_{n-1}(\nu_0)\\
 &\le U_E(n,m):=A_{n,m}\left(\frac n{n-1}\right)^{n-1}.
 \end{aligned}
\end{equation}
The second inequality is strict if and only if $t>0$.
For $t=0$, equality $a(D)=B_E(n,m)$ holds if and only if
\[
 G=\frac{nm}{n-1}P.
\]
For $t>0$, equality holds if and only if there is a vector
$v\in\{1,-1\}^n$ with $v^{\T}\one=0$ such that
\begin{equation}\label{eq:eulerian-bipartite-two-level}
 G=\frac{n(m-2)}{n-1}P+2vv^{\T}.
\end{equation}
In this case $m\equiv2\pmod{2(n-1)}$ and
$t=(n-2)/(2(n-1))$. The two sign classes of $v$ have size $n/2$;
pairs within a class have $\lfloor\lambda\rfloor+1$ common
outneighbors in the larger part, and pairs in different classes have
$\lfloor\lambda\rfloor$.
\end{theorem}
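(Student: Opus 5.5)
Write $p=n-1$ and let $\gamma_1,\ldots,\gamma_p\ge0$ be the eigenvalues of $G$ on $\one^\perp$. The approach is to apply \cref{lem:eulerian-moment-product} to the normalized eigenvalues $z_i=\gamma_i/(nm)$, with a lower bound on their spread coming from integrality of the entries of $G$.

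\textbf{Normalization.} Since $G\one=0$ and $\tr G=nm$, the $z_i$ are nonnegative and sum to $1$. By \eqref{eq:eulerian-bipartite-determinant},
\[
 a(D)=A_{n,m}\prod_{i=1}^{p}(1+z_i).
\]
Their spread is
\[
 \nu=\sum_i z_i^2-\frac1p=\frac{\|G\|_F^2}{n^2m^2}-\frac1{n-1}.
\]
The constraint $\nu\le(p-1)/p$ holds automatically because $\sum z_i^2\le1$. The lemma then gives $a(D)\le A_{n,m}\mathcal E_{n-1}(\nu)$.

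\textbf{Variance bound.} Since $\mathcal E_{n-1}$ is strictly decreasing, the first inequality reduces to showing $\nu\ge\nu_0$. I would write the off-diagonal entries as $G_{ij}=4c_{ij}-m$, where $c_{ij}=(CC^{\T})_{ij}$ is an integer common-neighbour count. The diagonal entries are all $m$. Each row of $G$ sums to zero, so the off-diagonal entries sum to $-nm$. Hence the average of $c_{ij}$ over the $n(n-1)$ ordered pairs is
\[
 \frac{m(n-2)}{4(n-1)}=\lambda.
\]
For integers with fixed mean $\lambda$, the sum $\sum(c_{ij}-\lambda)^2$ is at least $n(n-1)t(1-t)$, with equality exactly when every $c_{ij}\in\{\lfloor\lambda\rfloor,\lceil\lambda\rceil\}$. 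This follows from $(c-\lambda)^2\ge t(1-t)$ for integer $c$, which holds because $(c-\lfloor\lambda\rfloor)(c-\lfloor\lambda\rfloor-1)\ge0$. Expanding $\|G\|_F^2=nm^2+16\sum(c_{ij}-\lambda)^2+n(n-1)(4\lambda-m)^2$ and simplifying should give exactly $\nu\ge\nu_0$. The second inequality is $\mathcal E(\nu_0)\le\mathcal E(0)=(n/(n-1))^{n-1}$, which is strict iff $\nu_0>0$, that is, iff $t>0$.

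\textbf{Equality when $t=0$.} Here all $c_{ij}=\lambda$ and $\nu=0$, so all $\gamma_i=nm/(n-1)$ and $G=\frac{nm}{n-1}P$. Conversely, this $G$ gives $\nu=0$ and equality in the lemma.

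\textbf{Equality when $t>0$.} This is the main obstacle. Equality forces every $c_{ij}\in\{\lfloor\lambda\rfloor,\lceil\lambda\rceil\}$, and it forces the spectrum on $\one^\perp$ to be one copy of $\beta$ and $p-1$ copies of $\alpha$ by the lemma. The lemma requires $\nu>0$ for this; that holds since $\nu\ge\nu_0>0$.

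It follows that $G=\gamma_\alpha P+(\gamma_\beta-\gamma_\alpha)ww^{\T}$ for a unit vector $w\perp\one$ and $\gamma_\beta>\gamma_\alpha$. The off-diagonal entries of $-\gamma_\alpha/n+(\gamma_\beta-\gamma_\alpha)w_iw_j$ take only two values, differing by $4$. I would first argue that the products $w_iw_j$, $i\ne j$, take exactly two values. For $n\ge4$ this forces $|w_i|$ to be constant, and the sign pattern then splits the vertices into two classes. A $\{\pm1\}$ vector $v=\sqrt n\,w$ then yields the form $\frac{c}{n-1}P+2vv^{\T}$. Matching the gap of $4$ fixes the coefficient at $2$. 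The trace condition then gives $c=n(m-2)$, which is \eqref{eq:eulerian-bipartite-two-level}. The condition $v\perp\one$ makes the classes of size $n/2$.

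Comparing the same-class entries with $4(\lfloor\lambda\rfloor+1)-m$ and the cross-class entries with $4\lfloor\lambda\rfloor-m$ then yields $m\equiv2\pmod{2(n-1)}$ and $t=(n-2)/(2(n-1))$. I would also check directly that the two eigenvalues match $\alpha,\beta$ at $\nu=\nu_0$, which gives the converse.

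The delicate point is excluding degenerate $w$ in the two-value argument. I would handle it through the rank-one structure, since three distinct $|w_i|$ produce three product values.
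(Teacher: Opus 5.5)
Your overall route is the paper's: you normalize the eigenvalues of $G$ on $\one^\perp$, apply \cref{lem:eulerian-moment-product}, and get $\nu\ge\nu_0$ from $(c-b)(c-b-1)\ge0$ with $b=\lfloor\lambda\rfloor$. Summing over all ordered pairs rather than row by row is harmless, since the global mean is also $\lambda$. One slip there: $(c-\lambda)^2\ge t(1-t)$ is false for a single integer $c$ (take $c=b$ with $t<1/2$). What $(c-b)(c-b-1)\ge0$ actually gives is $(c-\lambda)^2\ge t(1-t)+(1-2t)(c-\lambda)$. The linear term disappears only after summing, because the $c_{ij}$ have mean $\lambda$.

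The genuine gap is in the equality case $t>0$. Your key step claims that two-valued off-diagonal products $w_iw_j$ force $|w_i|$ to be constant when $n\ge4$. This is false, and your remedy only treats three distinct values of $|w_i|$. The unit vectors $w=(e_1-e_2)/\sqrt2$ and $w=Pe_1/\|Pe_1\|$ are orthogonal to $\one$ and have only two distinct values of $|w_i|$. Their off-diagonal products take exactly two values: $-1/2,\,0$ and $-1/n,\ 1/(n(n-1))$, respectively. These are not vacuous cases. For $n=4$ and $t\in\{1/3,2/3\}$ the forced eigenvalues satisfy $\gamma_\beta-\gamma_\alpha=8$. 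Then $\gamma_\alpha P+(\gamma_\beta-\gamma_\alpha)ww^{\T}$ with $w=(e_1-e_2)/\sqrt2$ has the forced spectrum and off-diagonal values exactly $4$ apart, just like the genuine two-level matrix. Excluding it from off-diagonal data alone needs further arithmetic, such as integrality of $b$ or the mean condition.

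The missing ingredient is the diagonal, which you never use. Since $G_{ii}=m$ and $P_{ii}=(n-1)/n$, every $i$ satisfies $m=\gamma_\alpha(n-1)/n+(\gamma_\beta-\gamma_\alpha)w_i^2$. So $w_i^2$ is constant, and hence equals $1/n$. This one line is how the paper obtains the balanced sign vector $v=\sqrt n\,w$; after it, your gap-of-$4$ and trace computations go through. When you then derive $m\equiv2\pmod{2(n-1)}$, note two things. Both values of $c_{ij}$ occur, since both pair types exist when $n/2\ge2$. And integrality of $(m-2)(n-2)/(4(n-1))$ yields the congruence only via the coprimality of $n/2-1$ and $n-1$.
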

\begin{proof}
Let $p=n-1$, $G_0=nmP/(n-1)$, and let $z_1,\ldots,z_p$ be the
eigenvalues of $G$ on $\one^\perp$, divided by $nm$. The matrix
$G=XX^{\T}$ is positive semidefinite, $G\one=0$, and
$\tr G=nm$. Thus $z_i\ge0$ and $\sum_i z_i=1$.

We first bound the variance using the integrality of the common-outneighbor counts.
Since $G_0=nmI/p$ on $\one^\perp$ and $G\one=G_0\one=0$,
the spectral theorem gives
\[
 \nu:=\sum_{i=1}^p(z_i-1/p)^2
 =\frac1{n^2m^2}\sum_{i=1}^p(nmz_i-nm/p)^2
 =\frac{\|G-G_0\|_F^2}{n^2m^2}.
\]
For $i\ne j$, let $c_{ij}=(CC^{\T})_{ij}$ count the common
outneighbors of $u_i,u_j$. The row and column sums of $C$ give
\[
 CC^{\T}\one=C\left(\frac n2\one\right)
                =\frac{nm}{4}\one,
 \qquad (CC^{\T})_{ii}=\frac m2.
\]
Subtracting the diagonal entry from the row sum gives
\[
 \sum_{j\ne i}c_{ij}=\frac{nm}{4}-\frac m2
                  =\frac{m(n-2)}4=(n-1)\lambda.
\]
As $G=4CC^{\T}-mJ_n$, $G$ and $G_0$ have diagonal entries $m$, and
\[
 (G-G_0)_{ij}=4c_{ij}-m+\frac m{n-1}
            =4(c_{ij}-\lambda),\qquad (G-G_0)_{ii}=0.
\]

Let $b=\lfloor\lambda\rfloor$, so $\lambda=b+t$. For integer $c$,
$(c-b)(c-b-1)\ge0$, with equality exactly at $b,b+1$.
Since $\sum_{j\ne i}(c_{ij}-\lambda)=0$, expansion gives
\begin{align*}
 0&\le\sum_{j\ne i}(c_{ij}-b)(c_{ij}-b-1)\\
  &=\sum_{j\ne i}(c_{ij}-\lambda)^2
    +(2t-1)\sum_{j\ne i}(c_{ij}-\lambda)
    +(n-1)t(t-1)\\
  &=\sum_{j\ne i}(c_{ij}-\lambda)^2-(n-1)t(1-t).
\end{align*}
Summing over all $n$ rows, with the Frobenius norm counting ordered pairs,
\[
 \|G-G_0\|_F^2
 =16\sum_{i=1}^n\sum_{j\ne i}(c_{ij}-\lambda)^2
 \ge16n(n-1)t(1-t).
\]
Division by $n^2m^2$ gives $\nu\ge\nu_0$, with equality exactly
when every off-diagonal $c_{ij}$ is $b$ or $b+1$.

To apply the product bound, note that $t(1-t)\le1/4$ and
$m\ge n\ge4$ give
\[
 0\le\nu_0\le\frac{4(n-1)}{nm^2}
 <\frac4{m^2}\le\frac14<\frac{p-1}{p}.
\]
Thus $\nu_0$ lies in the domain of $\mathcal E_p$.
By \eqref{eq:eulerian-bipartite-determinant},
\Cref{lem:eulerian-moment-product} and its strict monotonicity,
\[
 \frac{a(D)}{A_{n,m}}=\prod_{i=1}^p(1+z_i)
 \le\mathcal E_p(\nu)\le\mathcal E_p(\nu_0)
 \le\mathcal E_p(0)=\left(\frac n{n-1}\right)^{n-1}.
\]
This proves \eqref{eq:eulerian-moment-bound}. Since $0\le t<1$,
$\nu_0=0$ exactly when $t=0$; strict monotonicity gives $B_E<U_E$
exactly when $t>0$.

For the equality case with $t=0$, we have $\nu_0=0$, so equality
forces $\mathcal E_p(\nu)=\mathcal E_p(0)$,
hence $\nu=0$ and $G=G_0$. Conversely, $G=G_0$ gives $z_i=1/p$
and attains the bound. Equivalently, $CC^{\T}$ has diagonal $m/2$
and off-diagonal $\lambda$; with the column sums $n/2$, this is
the design with parameters $(n,m,n/2,m/2,\lambda)$.

Now suppose $t>0$ and the bound is attained. Equality in the
variance and product bounds gives $\nu=\nu_0$ and
$c_{ij}\in\{b,b+1\}$. Let $\alpha=\alpha_p(\nu_0)$ and
$\beta=\beta_p(\nu_0)>\alpha$. Equality in
\Cref{lem:eulerian-moment-product} gives one normalized eigenvalue
$\beta$ and $p-1$ copies of $\alpha$. For a unit eigenvector
$u\perp\one$ corresponding to the eigenvalue $nm\beta$, the spectral theorem gives
\[
 G=nm\alpha P+nm(\beta-\alpha)uu^{\T}.
\]
The diagonal entries give $1=p\alpha+n(\beta-\alpha)u_i^2$,
since $G_{ii}=m$ and $P_{ii}=p/n$. Together with
$(p-1)\alpha+\beta=1$, this yields
\[
 u_i^2=\frac{1-p\alpha}{n(\beta-\alpha)}=\frac1n.
\]
Thus $v=\sqrt n\,u$ is a sign vector with $n/2$ of each sign,
because $v^{\T}\one=0$. For $i\ne j$,
$G_{ij}=-m\alpha+m(\beta-\alpha)v_iv_j$ and
$c_{ij}=(G_{ij}+m)/4$, giving the following numbers of common
outneighbors for pairs with equal and opposite signs, respectively:
\[
 \frac m4(1-\alpha+\beta-\alpha),\qquad
 \frac m4(1-\alpha-(\beta-\alpha)).
\]
Their difference is $m(\beta-\alpha)/2>0$. Both types occur since
$n/2\ge2$, and both values belong to $\{b,b+1\}$; they must
therefore be $b+1$ and $b$. Hence $\beta-\alpha=2/m$.
Combining this with $p\alpha+(\beta-\alpha)=1$ yields
\[
 \alpha=\frac{m-2}{m(n-1)},\qquad
 \beta=\frac{m+2n-4}{m(n-1)},
\]
and substitution in the formula for $G$ proves
\eqref{eq:eulerian-bipartite-two-level}.
Each row has $n/2-1$ equal-sign pairs and $n/2$ opposite-sign pairs, so
\[
 \lambda=\frac{(n/2-1)(b+1)+(n/2)b}{n-1}
        =b+\frac{n-2}{2(n-1)}.
\]
This gives the stated fractional part. Letting $n=2k$ and $m=2q$ gives
\[
 b=\lambda-t=\frac{q(k-1)}{2k-1}-\frac{k-1}{2k-1}
             =\frac{(q-1)(k-1)}{2k-1}\in\ZZ.
\]
Since $(2k-1)-2(k-1)=1$, the factors $k-1$ and $2k-1$ are
coprime. Hence $2k-1$ divides $q-1$, or $m\equiv2\pmod{2(n-1)}$.

Conversely, suppose \eqref{eq:eulerian-bipartite-two-level} holds
for a balanced sign vector $v$. For distinct $i,j$, it gives
\[
 c_{ij}=\frac{G_{ij}+m}{4}
 =\begin{cases}
 \displaystyle\frac{(m-2)(n-2)}{4(n-1)}+1,&v_i=v_j,\\[4pt]
 \displaystyle\frac{(m-2)(n-2)}{4(n-1)},&v_i=-v_j.
 \end{cases}
\]
The lower value is an integer. With $n=2k,m=2q$, it is
$(q-1)(k-1)/(2k-1)$, so the same coprimality argument gives
$m\equiv2\pmod{2(n-1)}$. The row mean exceeds the lower value
by $(n-2)/(2(n-1))\in(0,1)$; hence the lower value is
$b=\lfloor\lambda\rfloor$. For each row, the common-outneighbor counts
attain the lower bound for the sum of their squared deviations from $\lambda$,
giving $\nu=\nu_0$.

Finally, $Pv=v$ and $v^{\T}v=n$, so
\eqref{eq:eulerian-bipartite-two-level} has eigenvalue
$n(m+2n-4)/(n-1)$ on $v$ and $n(m-2)/(n-1)$ with multiplicity
$p-1=n-2$ on its orthogonal complement in $\one^\perp$.
Dividing by $nm$ gives one upper and $p-1$ equal lower values with
sum $1$ and variance $\nu_0$, namely $\beta_p(\nu_0)$ and
$\alpha_p(\nu_0)$. Thus \Cref{lem:eulerian-moment-product} and
the variance comparison are both equalities, proving $a(D)=B_E(n,m)$.
\end{proof}

The next two families attain $B_E$ with $t>0$, giving exact maxima
where \eqref{eq:eulerian-moment-bound} strictly improves $U_E$.
\begin{samepage}
\begin{corollary}\label{cor:eulerian-sharp-families}
Let $h\ge1$ be an integer. Equality in the first inequality of
\eqref{eq:eulerian-moment-bound} is attained in each of the following
families:
\begin{enumerate}[label=\textup{(\roman*)}]
 \item $n=2^r$ with $r\ge2$, and $m=2h(n-1)+2$;
 \item any even $n\ge4$, and $m=h\binom n{n/2}+2$.
\end{enumerate}
In both families, the maximum rooted count over Eulerian orientations is
\[
 \max_D a(D)=
 \frac{n^{m-1}(nm-2)^{n-2}(nm+2n-4)}
      {2^{n+m-1}(n-1)^{n-1}},
\]
and the maximizing orientations are precisely those satisfying
\eqref{eq:eulerian-bipartite-two-level}.
In particular, family \textup{(i)} with $h=1$ determines the maximum on
$K_{n,2n}$ for every $n=2^r\ge4$.
\end{corollary}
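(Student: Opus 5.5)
By \Cref{thm:eulerian-bipartite-bound}, the whole task is to exhibit, in each family, a $\{0,1\}$ matrix $C$ with row sums $m/2$ and column sums $n/2$ whose sign matrix $X=2C-J_{n,m}$ satisfies \eqref{eq:eulerian-bipartite-two-level} for some balanced sign vector $v$. The equality characterization in that theorem then identifies the maximizers as exactly the orientations satisfying \eqref{eq:eulerian-bipartite-two-level}. The common value follows by computing $A_{n,m}\det(I_n+G/(nm))$. The eigenvalues of $G/(nm)$ on $\one^\perp$ are $\beta=(m+2n-4)/(m(n-1))$ once and $\alpha=(m-2)/(m(n-1))$ with multiplicity $n-2$. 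This gives
\[
 \det\left(I_n+\frac G{nm}\right)=\frac{(nm-2)^{n-2}(nm+2n-4)}{m^{n-1}(n-1)^{n-1}},
\]
and multiplying by $A_{n,m}$ yields the displayed maximum. Both families satisfy $m\equiv2\pmod{2(n-1)}$; in (ii) this holds because $\binom n{n/2}$ is divisible by $2(n-1)$. We will not need that fact, since the construction itself forces it.

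We build the columns of $X$ as balanced sign vectors $x\in\{\pm1\}^n$ with $x^{\T}\one=0$, so that column sums are $n/2$. We take $X=[\,Y\mid v\ \ -v\,]$, where $Y$ has $m-2$ balanced columns with $YY^{\T}=\frac{n(m-2)}{n-1}P$ and $Y\one=0$. The two extra columns $\pm v$ add $2vv^{\T}$ to $G$ and contribute $0$ to the row sums. Hence $X\one=0$, all row sums of $C$ equal $m/2$, and $G=XX^{\T}$ is exactly \eqref{eq:eulerian-bipartite-two-level}. It therefore suffices to find balanced sign matrices $Y$ with $m-2$ columns, zero row sums, and $YY^{\T}$ a multiple of $P$.

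For family (ii), let $Y$ consist of $h$ copies of the full list of all $\binom n{n/2}$ balanced sign vectors. The list is invariant under coordinate permutations and under negation. Negation gives zero row sums, and permutation invariance makes $YY^{\T}$ of the form $aI+bJ$. Since $YY^{\T}\one=0$, this form is a multiple of $P$.

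For family (i), use a Sylvester Hadamard matrix of order $n=2^r$. Its $n-1$ nonconstant rows, viewed as vectors indexed by the $n$ coordinates, are balanced. Form the $n\times(n-1)$ matrix $Y_1$ whose columns are these vectors. Orthogonality of the Hadamard rows gives $Y_1Y_1^{\T}=nI-J=nP$. To obtain zero row sums, take $Y=[\,Y_1\mid -Y_1\,]$ repeated $h$ times. This has $2h(n-1)=m-2$ columns and $YY^{\T}=2hnP=\frac{n(m-2)}{n-1}P$, as required. We choose $v$ to be any balanced sign vector, for example a nonconstant Hadamard row. The case $h=1$ gives $m=2n$.

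The main obstacle is fitting the column count to exactly $m-2$ while keeping $YY^{\T}$ proportional to $P$ and the row sums zero; this is why the pairing with negatives is used. The remaining checks are routine: that the resulting $C$ is a genuine $0$--$1$ orientation of $K_{n,m}$, and that $4\le n\le m$ holds in both families.
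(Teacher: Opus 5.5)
Your proposal is correct and follows essentially the paper's proof: the same $X=[\,Y\mid v\ \ {-v}\,]$ construction, with the nonconstant Sylvester Hadamard rows (exactly the characters $v_a$ of $\mathbb F_2^r$ used in the paper) for (i), $h$ copies of all balanced sign vectors for (ii), and the same eigenvalue computation of $A_{n,m}\det(I_n+G/(nm))$. The one difference is how you get $m\equiv2\pmod{2(n-1)}$, and hence $t>0$, which you need before invoking the $t>0$ equality characterization of \Cref{thm:eulerian-bipartite-bound}. You derive it from integrality of the common-outneighbor counts of the constructed $C$, as in the converse part of that theorem's proof, whereas the paper proves $2(n-1)\mid\binom n{n/2}$ directly via Catalan numbers; your route is valid but should be written out explicitly.
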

\end{samepage}
The proof is given in \Cref{app:bipartite-eulerian-families}.

﻿\section{Concluding remarks and a tournament conjecture}\label{sec:problem53}\label{sec:conclusion}

The regularity theorem reduces odd-order tournament maximization to a problem about the overlap of out-neighborhoods. The degrees of a maximizing tournament are fixed by \cref{thm:regular-main}; the remaining question is which arrangement of its arcs gives the largest rooted count. We conclude with a necessary-condition conjecture motivated by doubly regular tournaments, an exact characteristic-coefficient bound, and the extremizers at small orders.

\subsection{The doubly regular model}
Doubly regular tournaments provide the equality model for the known determinant bound. By Bandekar et al.~\cite[Theorem~1.11]{bandekar2025}, every regular tournament of odd order $n>1$ satisfies
\[
 a(T)\leq\frac1n\left(\frac{n(n+1)}4\right)^{(n-1)/2},
\]
with equality precisely when every pair of distinct vertices has $(n-3)/4$ common outneighbors. These are the doubly regular tournaments. Their orders are $3\pmod4$, and they correspond to skew Hadamard matrices by Reid and Brown~\cite{reidbrown1972}; their spectral characterization also appears in de Caen et al.~\cite{decaen1992}.

The Hadamard property appears here as a uniform Gram matrix. If $A$ is the adjacency matrix of a doubly regular tournament and $M=A-A^{\mathsf T}$, then
\[
 AA^{\mathsf T}=\frac{(n+1)I+(n-3)J}{4},\qquad
 MM^{\mathsf T}=nI-J.
\]
Thus $MM^{\mathsf T}$ acts as the scalar $n$ on $\mathbf1^\perp$: all nonzero squared singular values of $M$ are equal. This uniformity is exactly the equality structure in the arborescence bound.

At order $n=4k+1$, entrywise integrality suggests the corresponding nearly uniform structure. Problem~5.3 of~\cite{bandekar2025} asks for the maximizing Eulerian tournaments in this residue class. Throughout the rest of the section, $T$ is a regular tournament of order $n=4k+1$, with $k\geq1$. Equivalently, $T$ is Eulerian, every outdegree is $2k$, and $\tau(T)=na(T)$. Let $A$ be its $0$--$1$ adjacency matrix, let $M=A-A^{\mathsf T}$, and put
\[
 H_n=\frac{(n+1)I+(n-3)J}{4},\qquad
 B=2(AA^{\mathsf T}-H_n).
\]
The off-diagonal entries of $AA^{\mathsf T}$ are the integers
$c_{ij}=|N_T^+(i)\cap N_T^+(j)|$, whereas those of the doubly regular model $H_n$ are $k-\tfrac12$. Their nearest integer values are $k-1$ and $k$.

We call $T$ \emph{nearly doubly regular}, abbreviated NDR, if these nearest values occur at every pair:
\begin{equation}\label{eq:53-ndr-definition}
 c_{ij}\in\{k-1,k\}\qquad(i\ne j).
\end{equation}
The average common-out-neighbor count is
\[
 \frac{n\binom{2k}{2}}{\binom n2}=k-\frac12,
\]
since the $2k$ inneighbors of each vertex give $\binom{2k}{2}$ unordered pairs having that vertex as a common outneighbor. Hence \eqref{eq:53-ndr-definition} asks that the counts differ by at most one. It is the closest entrywise analogue of double regularity allowed at this order. Near-homogeneous tournaments, an equivalent description in this setting, go back to Tabib~\cite{tabib1980}. Further constructions are given by Tang, Chen and Zhang~\cite{tang2026}, while Satake~\cite{satake2025} studies cyclotomic constructions and their spectra.

\subsection{The fourth coefficient and the proposed structure}
The fourth characteristic coefficient gives an exact measure of this departure from the doubly regular model. For $0\le j\le n$, let $a_j(T)=[x^{n-j}]\Phi_M(x)$, so $a_0(T)=1$. Since $M^{\mathsf T}=-M$, transposition gives $\det(xI-M)=\det(xI+M)$ and hence $\Phi_M(-x)=(-1)^n\Phi_M(x)$. Comparing coefficients shows that $a_j(T)=0$ for every odd index $j$. Moreover, $a_2(T)$ is the sum of the principal $2\times2$ minors of $M$. Each such minor has determinant $1$, since $M_{ij}=-M_{ji}\in\{-1,1\}$ for $i\ne j$. Thus $a_2(T)=\binom n2$, and
\[
 \Phi_M(x)=\det(xI-M)
 =x^n+a_2(T)x^{n-2}+a_4(T)x^{n-4}+\cdots.
\]
By \eqref{eq:regular-arborescence}, the rooted count is obtained by evaluating $\Phi_M$ at $n$. Since the leading coefficient and $a_2(T)$ are fixed by the order and all odd-indexed coefficients are zero, $a_4(T)$ is the first coefficient not fixed by these identities. This motivates studying its maximum and the structure of the tournaments attaining it. Related four-vertex information is studied by Linial and Morgenstern~\cite{linial2016} through directed four-cycles, and by Belkouche et al.~\cite{belkouche2019} through skew adjacency matrices.

The following proposition connects that coefficient with the nearest-integer Gram structure. As in the preliminaries, $\|X\|_F^2=\sum_{i,j}|X_{ij}|^2$ is the squared Frobenius norm.

\begin{proposition}\label{prop:53-four-equivalences}
Let $T$ be a regular tournament of order $n=4k+1$, with $k\geq1$, and let
\[
 \mathcal C_n=\frac{n(n-1)(n^2-3n-8)}8.
\]
Then
\[
 a_4(T)=\frac{n^2(n-1)(n-3)}8
             -4\|AA^{\mathsf T}-H_n\|_F^2,
\]
and
\[
 a_4(T)\leq\mathcal C_n,\qquad
 \|AA^{\mathsf T}-H_n\|_F^2\geq\frac{n(n-1)}4.
\]
The following conditions are equivalent:
\begin{enumerate}
\item $T$ is nearly doubly regular;
\item $a_4(T)=\mathcal C_n$;
\item $AA^{\mathsf T}=H_n+B/2$, where $B$ is symmetric, $B_{ii}=0$, and $B_{ij}\in\{-1,1\}$ for $i\ne j$;
\item $\|AA^{\mathsf T}-H_n\|_F^2=n(n-1)/4$.
\end{enumerate}
\end{proposition}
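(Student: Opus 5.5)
The plan is to compute $a_4(T)$ from power sums of the eigenvalues of $M$ and then rewrite those traces through the Gram matrix $AA^{\mathsf T}$. Write $p_j=\tr M^j$ and let $e_j$ be the elementary symmetric functions of the eigenvalues of $M$, so that $a_j(T)=(-1)^je_j=e_j$ for even $j$. Since $p_1=p_3=0$ by skew-symmetry, Newton's identities give
\[
 e_2=-\tfrac12 p_2,\qquad 4e_4=-e_2p_2-p_4 .
\]
Hence
\[
 a_4(T)=\tfrac18 p_2^2-\tfrac14 p_4 .
\]
Here $p_2=\tr(M^2)=-\|M\|_F^2=-n(n-1)$, which is consistent with $a_2(T)=\binom n2$. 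Also $p_4=\tr\bigl((MM^{\mathsf T})^2\bigr)=\|MM^{\mathsf T}\|_F^2$. The only delicate point in this first step is the sign bookkeeping in Newton's identities. I would cross-check it against the sum of principal $4\times4$ minors on a small example, for instance the $3$-cycle padded to order $5$.

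Next I would express $MM^{\mathsf T}$ in terms of $E:=AA^{\mathsf T}-H_n$. Write $M=2A-(J-I)$ and use regularity: $AJ=JA^{\mathsf T}=2kJ$, $A+A^{\mathsf T}=J-I$, and $8k=2n-2$. Expanding gives
\[
 MM^{\mathsf T}=4AA^{\mathsf T}+(2-n)J-I=4E+nI-J .
\]
Two facts about $E$ follow from regularity. Its diagonal vanishes, since $(AA^{\mathsf T})_{ii}=2k=(n-1)/2$ equals $(H_n)_{ii}$. Its entry sum also vanishes, since
\[
 \one^{\mathsf T}AA^{\mathsf T}\one=n(2k)^2=\frac{n(n-1)^2}{4}=\one^{\mathsf T}H_n\one .
\]
Therefore $E$ is Frobenius-orthogonal to both $I$ and $J$, and
\[
 \|MM^{\mathsf T}\|_F^2=16\|E\|_F^2+\|nI-J\|_F^2=16\|E\|_F^2+n^2(n-1).
\]
Substituting this into the formula for $a_4(T)$ yields
\[
 a_4(T)=\frac{n^2(n-1)(n-3)}{8}-4\|E\|_F^2,
\]
which is the stated identity.

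For the bounds and equivalences, observe that every off-diagonal entry of $E$ equals $c_{ij}-(k-\tfrac12)$, a half-integer. Each such entry therefore has square at least $\tfrac14$, with equality exactly when $c_{ij}\in\{k-1,k\}$. Summing over the $n(n-1)$ ordered pairs and using the zero diagonal gives $\|E\|_F^2\ge n(n-1)/4$. Equality holds precisely under \eqref{eq:53-ndr-definition}, which proves (1)$\Leftrightarrow$(4). Inserting the minimum value into the identity gives
\[
 a_4(T)\le\frac{n^2(n-1)(n-3)}{8}-n(n-1)=\mathcal C_n,
\]
with equality exactly in case (4), which proves (2)$\Leftrightarrow$(4). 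Finally, (3) restates (1) directly. Indeed, $B=2E$ is symmetric because $E$ is, and $B_{ii}=0$ because $E$ has zero diagonal. Off the diagonal, $B_{ij}=2c_{ij}-2k+1\in\{-1,1\}$ holds if and only if $c_{ij}\in\{k-1,k\}$.
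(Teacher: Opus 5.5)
Your proposal is correct and follows the paper's proof essentially step for step. Both arguments obtain $a_4=p_2^2/8-p_4/4$ from Newton's identities and write $MM^{\mathsf T}=nI-J+4E$ (the paper's $R=nI-J+2B$). Both kill the cross term using the zero diagonal and zero entry sum, and both use the half-integrality of the off-diagonal entries of $E$; the paper phrases this last step as oddness of $B_{ij}$ and the integer $\delta(T)$, which also gives $a_4(T)=\mathcal C_n-16\delta(T)$.
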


The proof is given in \cref{app:ndr}. The coefficient identity makes the structural analogy quantitative: maximizing $a_4$ is exactly the same as minimizing the Frobenius distance from $AA^{\mathsf T}$ to $H_n$. NDR tournaments attain the distance bound forced by integer common-neighbor counts. Whenever such a tournament exists at the prescribed order, the four conditions therefore characterize the fourth-coefficient maximizers.

The extremizers at small orders support this proposed analogue of the doubly regular model. Computer-assisted enumeration determines the exact maximum arborescence counts at orders $5,9,13,17$, the nontrivial cases of Problem~5.3 below order~$21$. Every maximizing tournament in these computations is NDR. The programs use exact arithmetic and complete covers of the relevant tournament classes; the programs and certificates are available in the accompanying GitHub repository~\cite{coraxeus2026repository}.

Together, the doubly regular equality model, its nearest-integer counterpart, the fourth-coefficient identity and these finite extremizers motivate the following conjecture.

\begin{conjecture}\label{conj:ndr}
For every $k\geq1$, every regular tournament of order $4k+1$ that maximizes the number of arborescences is nearly doubly regular.
\end{conjecture}

By \cref{prop:53-four-equivalences}, each of its four conditions is an equivalent necessary conclusion in \cref{conj:ndr}. The coefficient formulation specifies equality with $\mathcal C_n$ at every order. Its equivalence with maximizing the actual coefficient over regular tournaments holds at orders admitting an NDR tournament. The remaining characteristic coefficients give the next comparisons within the NDR class, and \eqref{eq:regular-arborescence} expresses their combined contribution to the arborescence count.

\section{AI Declaration}\label{sec:AIdeclaration}

ChatGPT (GPT) was used to assist with the exploration of proof strategies and the formulation and testing of intermediate claims. The authors directed the research, selected the methods, organized the arguments, independently checked all proofs, and take full responsibility for the mathematical content and presentation.

\makeatletter
\def\@biblabel#1{\@defaultbiblabelstyle{#1}}
\def\bibsetup{}
\makeatother
\input{main.bbl}

\appendix

\section{Auxiliary proofs for complete bipartite graphs}\label[appendix]{app:bipartite}
We use the notation of \Cref{sec:problem55}.

\subsection{The spectral product bound}\label[appendix]{app:bipartite-moment-product}
\begin{proof}[Proof of \Cref{lem:eulerian-moment-product}]
Since $z_i\ge0$ and $\sum_i z_i=1$, we have
$\sum_i z_i^2\le(\sum_i z_i)^2=1$ and
$0\le\nu=\sum_i z_i^2-1/p\le(p-1)/p$.
Let $\alpha=\alpha_p(\nu)$ and $\beta=\beta_p(\nu)$.
Then $0\le\alpha\le\beta\le1$. For $\nu=0$, all $z_i=1/p$
and equality holds; assume henceforth $\nu>0$, so $\alpha<\beta$.

For each $j$, the other centered coordinates sum to $-(z_j-1/p)$.
By Cauchy--Schwarz,
\[
 \sum_{i\ne j}(z_i-1/p)^2
 \ge\frac1{p-1}\left(\sum_{i\ne j}(z_i-1/p)\right)^2
 =\frac{(z_j-1/p)^2}{p-1}.
\]
Adding the $j$th square yields
$\nu\ge p(z_j-1/p)^2/(p-1)$, and hence
$z_j\le1/p+\sqrt{(p-1)\nu/p}=\beta$.

Let $f(x)=\log(1+x)$ and let $q$ be the quadratic polynomial satisfying
\[
 q(\alpha)=f(\alpha),\qquad q'(\alpha)=f'(\alpha),\qquad
 q(\beta)=f(\beta).
\]
The first two conditions fix the constant and linear terms about
$\alpha$; the third fixes the quadratic term since $\beta\ne\alpha$.
We show that $q\ge f$ on $[0,\beta]$, with equality at the two nodes.

For $x\in[0,\beta]\setminus\{\alpha,\beta\}$, let
\[
 c=\frac{f(x)-q(x)}{(x-\alpha)^2(x-\beta)},\qquad
 F(y)=f(y)-q(y)-c(y-\alpha)^2(y-\beta).
\]
By construction, $F(x)=F(\beta)=F(\alpha)=F'(\alpha)=0$.
Three applications of Rolle's theorem give $F'''(\xi)=0$ between
the extreme nodes. Since $q'''=0$, we have $c=f'''(\xi)/6$, hence
\[
 f(x)-q(x)=\frac{f'''(\xi)}6(x-\alpha)^2(x-\beta)<0.
\]
Here $f'''(\xi)=2/(1+\xi)^3>0$ and $x<\beta$, $x\ne\alpha$.
Equality holds at the nodes by definition.

Let $s=\sqrt{\nu/[p(p-1)]}$. Then
$\alpha=1/p-s$ and $\beta=1/p+(p-1)s$, so
\begin{align*}
 (p-1)\alpha+\beta&=1,\\
 (p-1)\alpha^2+\beta^2
 &=\frac1p+p(p-1)s^2=\frac1p+\nu.
\end{align*}
Thus $(z_i)$ and $(\alpha,\ldots,\alpha,\beta)$ have the same
first and second moments, and hence the same sum of values of $q$.
The inequality $f\le q$ gives
\[
 \sum_{i=1}^p\log(1+z_i)
 \le\sum_{i=1}^p q(z_i)
 =(p-1)q(\alpha)+q(\beta)
 =\log\mathcal E_p(\nu).
\]
Exponentiating proves the bound. Equality requires every $z_i$ to
be a node. If $\ell$ entries equal $\beta$, their sum is
$p\alpha+\ell(\beta-\alpha)=1=p\alpha+(\beta-\alpha)$,
so $(\ell-1)(\beta-\alpha)=0$ and $\ell=1$.

For monotonicity, let $h=1+1/p$. Then
$\mathcal E_p(\nu)=(h-s)^{p-1}(h+(p-1)s)$ and
$\nu=p(p-1)s^2$. For $s>0$,
\begin{align*}
 \frac{d}{ds}\log\mathcal E_p(\nu)
 &=-\frac{p-1}{h-s}+\frac{p-1}{h+(p-1)s}\\
 &=-\frac{p(p-1)s}{(h-s)(h+(p-1)s)}.
\end{align*}
Dividing by $d\nu/ds=2p(p-1)s$ yields
\[
 \frac{d}{d\nu}\log\mathcal E_p(\nu)
 =-\frac1{2(h-s)(h+(p-1)s)}<0.
\]
Both factors in the denominator are positive. Together with
continuity at zero, this proves strict decrease on $[0,(p-1)/p]$.
\end{proof}

\subsection{Two Eulerian sharpness families}\label[appendix]{app:bipartite-eulerian-families}
\begin{proof}[Proof of \Cref{cor:eulerian-sharp-families}]
For each family we construct a sign matrix $X$ with zero row and
column sums and Gram matrix \eqref{eq:eulerian-bipartite-two-level}.
Then $C=(X+J_{n,m})/2$ has row sums $m/2$ and column sums $n/2$,
and defines an Eulerian orientation attaining
\Cref{thm:eulerian-bipartite-bound}.

For \textup{(i)}, let $n=2^r$ and index coordinates by $x\in\FF_2^r$.
For nonzero $a\in\FF_2^r$, let $v_a=((-1)^{a\cdot x})_x$,
with dot product in $\FF_2$. Choose $x_0$ with $a\cdot x_0=1$.
Translation by $x_0$ pairs opposite terms in the coordinate sum,
so $v_a^{\T}\one=0$. For $a\ne b$, the product character has
nonzero index $a+b$ and also sums to zero. Thus
\[
 v_a^{\T}v_b=\begin{cases}n,&a=b,\\0,&a\ne b.\end{cases}
\]
The $n-1$ vectors $v_a/\sqrt n$ form an orthonormal basis of
$\one^\perp$, so summing their projections gives
\[
 \sum_{a\ne0}v_av_a^{\T}=nP.
\]

Fix $a_0\ne0$. Take $h$ copies of each pair $v_a,-v_a$, followed
by one extra pair $v_{a_0},-v_{a_0}$. These $m=2h(n-1)+2$ columns
have zero column sums, and pairing makes each row sum zero. Their Gram matrix is
\[
 G=2h\sum_{a\ne0}v_av_a^{\T}+2v_{a_0}v_{a_0}^{\T}
  =2hnP+2v_{a_0}v_{a_0}^{\T}
  =\frac{n(m-2)}{n-1}P+2v_{a_0}v_{a_0}^{\T}.
\]
Moreover,
\[
 \lambda=\frac{m(n-2)}{4(n-1)}
 =\frac{h(n-2)}2+\frac{n-2}{2(n-1)}.
\]
The first summand is an integer, so $t=(n-2)/(2(n-1))>0$.
Thus \Cref{thm:eulerian-bipartite-bound} gives attainment with $B_E<U_E$.

The construction also applies to any real Hadamard matrix of order
$n\ge4$: multiplying rows by signs makes one column $\one$, and
the remaining columns are balanced orthogonal sign vectors replacing the $v_a$.

For \textup{(ii)}, let $n\ge4$ be even and $B=\binom n{n/2}$.
Complementation pairs the $B$ balanced sign vectors $y$ with $-y$,
so $B$ is even and their sum is zero. Let
\[
 S=\sum_{y\in\{1,-1\}^n,\,y^{\T}\one=0}yy^{\T}.
\]
The diagonal entries of $S$ are $B$, and coordinate symmetry makes
all off-diagonal entries equal, say to $c$. Since $S\one=0$,
$B+(n-1)c=0$, giving
\[
 S=\frac{nB}{n-1}I_n-\frac B{n-1}J_n=\frac{nB}{n-1}P.
\]

Take $h$ copies of every balanced sign vector and one extra pair $v,-v$.
The resulting $X$ has zero row and column sums, with
\[
 m=hB+2,\qquad
 G=hS+2vv^{\T}=\frac{nhB}{n-1}P+2vv^{\T}
             =\frac{n(m-2)}{n-1}P+2vv^{\T}.
\]
To check the required congruence, let $n=2k$. Then
\[
 B=\binom{2k}{k}
   =2(2k-1)\frac1k\binom{2k-2}{k-1}.
\]
The last factor is integral since
\[
 \frac1k\binom{2k-2}{k-1}
 =\binom{2k-2}{k-1}-\binom{2k-2}{k-2}.
\]
Thus $2(n-1)\mid B$ and $m\equiv2\pmod{2(n-1)}$, with $m$ even
and $m\ge2(n-1)+2\ge n$. Writing $m=2\ell(n-1)+2$ gives
\[
 \lambda=\frac{\ell(n-2)}2+\frac{n-2}{2(n-1)},
 \qquad t=\frac{n-2}{2(n-1)}>0.
\]
The Gram identity and \Cref{thm:eulerian-bipartite-bound} give attainment.

In both constructions, $Pv=v$ and $v^{\T}v=n$. The Gram eigenvalues
are zero on the span of $\one$,
\[
 \frac{n(m-2)}{n-1}+2n=\frac{n(m+2n-4)}{n-1}
\]
on the span of $v$, and $n(m-2)/(n-1)$ with multiplicity $n-2$
on $\{\one,v\}^{\perp}$. Substitution in
\eqref{eq:eulerian-bipartite-determinant} gives
\begin{align*}
 a(D)
 &=A_{n,m}\left(\frac{nm-2}{m(n-1)}\right)^{n-2}
           \frac{nm+2n-4}{m(n-1)}\\
 &=\frac{n^{m-1}(nm-2)^{n-2}(nm+2n-4)}
         {2^{n+m-1}(n-1)^{n-1}}.
\end{align*}
The equality condition in \Cref{thm:eulerian-bipartite-bound}
characterizes all maximizers. In \textup{(i)}, $h=1$ gives
$m=2(n-1)+2=2n$.
\end{proof}

\section{Equivalent conditions for nearly doubly regular tournaments}\label[appendix]{app:ndr}

We prove \cref{prop:53-four-equivalences} in the notation of \cref{sec:problem53}: $T$ is regular of order $n=4k+1$, $A$ is its adjacency matrix, $M=A-A^{\mathsf T}$, and
\[
 H_n=\frac{(n+1)I+(n-3)J}{4},\qquad
 B=2(AA^{\mathsf T}-H_n).
\]
Traces and Frobenius norms refer to the full $n\times n$ matrices.

\begin{proof}[Proof of \cref{prop:53-four-equivalences}]
Regularity, with $2k=(n-1)/2$, and $A+A^{\mathsf T}=J-I$ give
\begin{gather*}
 A\mathbf1=A^{\mathsf T}\mathbf1=2k\mathbf1,
 \qquad M\mathbf1=M^{\mathsf T}\mathbf1=0,\\
 2A=M+J-I,\qquad 2A^{\mathsf T}=-M+J-I.
\end{gather*}
Since $M\mathbf1=0$ and $\mathbf1^{\mathsf T}M=0$, we have $MJ=JM=0$; in particular, $M$ commutes with $J-I$. Multiplying the two displayed expressions yields
\begin{align*}
 4AA^{\mathsf T}
 &=(M+J-I)(-M+J-I)\\
 &=-M^2+M(J-I)-(J-I)M+(J-I)^2\\
 &=-M^2+I+(n-2)J.
\end{align*}
The last equality uses $J^2=nJ$. Put $R=MM^{\mathsf T}=-M^2$. Substituting this product formula into the definition of $B$ gives
\[
 B=\frac{R+I+(n-2)J-(n+1)I-(n-3)J}{2}
   =\frac{R-nI+J}{2}.
\]

The diagonal of $AA^{\mathsf T}$ is $2k$, while its $(i,j)$ entry for $i\ne j$ is $c_{ij}$. The diagonal of $H_n$ is also $2k$, and its off-diagonal entries are $(n-3)/4=k-\tfrac12$. Consequently,
\begin{equation}\label{eq:53-B-identities}
 \begin{aligned}
 B&=\frac{MM^{\mathsf T}-nI+J}{2},\qquad B\mathbf1=0,\\
 B_{ii}&=0,\qquad B_{ij}=2c_{ij}-(2k-1)\quad(i\ne j).
 \end{aligned}
\end{equation}
Here $B\mathbf1=0$ follows from $R\mathbf1=0$ and $J\mathbf1=n\mathbf1$; symmetry follows from that of $AA^{\mathsf T}$ and $H_n$.

The off-diagonal entries of $B$ are odd by \eqref{eq:53-B-identities}. For odd $z$, we have $z^2\geq1$ and $z^2\equiv1\pmod8$, so
\[
 \delta(T):=\sum_{i<j}\frac{B_{ij}^2-1}{8}
\]
is a nonnegative integer. Symmetry and the zero diagonal of $B$ give
\begin{align*}
 \operatorname{tr}(B^2)
 &=\sum_{i,j}B_{ij}B_{ji}
  =2\sum_{i<j}B_{ij}^2\\
 &=2\left(\binom n2+8\delta(T)\right)
  =n(n-1)+16\delta(T).
\end{align*}
Using $AA^{\mathsf T}-H_n=B/2$ gives
\[
 \|AA^{\mathsf T}-H_n\|_F^2
 =\frac14\operatorname{tr}(B^2)
 =\frac{n(n-1)}4+4\delta(T).
\]
Each term defining $\delta(T)$ is nonnegative, so $\delta(T)=0$ exactly when every off-diagonal entry is a sign. By \eqref{eq:53-B-identities},
\[
 B_{ij}=-1\ \Longleftrightarrow\ c_{ij}=k-1,
 \qquad
 B_{ij}=1\ \Longleftrightarrow\ c_{ij}=k.
\]
Thus $\delta(T)=0$ is equivalent to the NDR condition \eqref{eq:53-ndr-definition}.

To relate $\|AA^{\mathsf T}-H_n\|_F^2$ to $a_4(T)$, write
\[
 \Phi_M(x)=\sum_{j=0}^n\gamma_jx^{n-j},\qquad
 \gamma_0=1,\qquad p_j=\operatorname{tr}(M^j).
\]
Transposition gives $\det(xI-M)=\det(xI+M)$, so oddness of $n$ implies $\Phi_M(-x)=-\Phi_M(x)$. Thus $\gamma_1=\gamma_3=0$, $\gamma_2=a_2(T)$ and $\gamma_4=a_4(T)$. Every odd power of $M$ is skew-symmetric, giving $p_1=p_3=0$.

For these coefficients, Newton's identities take the form
\[
 p_j+\gamma_1p_{j-1}+\cdots+\gamma_{j-1}p_1+j\gamma_j=0
 \qquad(1\leq j\leq4).
\]
At $j=2$ and $j=4$, the equalities $\gamma_1=\gamma_3=p_1=p_3=0$ reduce them to
\[
 p_2+2a_2(T)=0,\qquad
 p_4+a_2(T)p_2+4a_4(T)=0.
\]
Each off-diagonal entry of $M$ is $1$ or $-1$, and $M_{ji}=-M_{ij}$. Thus
\[
 p_2=\sum_i\sum_{j\ne i}M_{ij}M_{ji}=-n(n-1).
\]
The $j=2$ identity gives $a_2(T)=n(n-1)/2$, and the $j=4$ identity then gives
\[
 a_4(T)=-\frac{p_4}{4}+\frac{p_2^2}{8}
       =\frac{n^2(n-1)^2}{8}-\frac{\operatorname{tr}(R^2)}4,
\]
where the last equality uses $R=-M^2$ and hence $R^2=M^4$.

By \eqref{eq:53-B-identities}, $R=nI-J+2B$. Expanding its square under the trace gives
\[
 \operatorname{tr}(R^2)
 =\operatorname{tr}((nI-J)^2)
  +4\operatorname{tr}((nI-J)B)+4\operatorname{tr}(B^2).
\]
Since $(nI-J)^2=n^2I-nJ$, the first trace is $n^3-n^2=n^2(n-1)$. The zero diagonal and row sums of $B$ give the cross term
\[
 \operatorname{tr}((nI-J)B)
 =n\operatorname{tr}(B)-\operatorname{tr}(JB)
 =0-\mathbf1^{\mathsf T}B\mathbf1=0.
\]
Therefore
\[
 \operatorname{tr}(R^2)=n^2(n-1)+4\operatorname{tr}(B^2).
\]
Substituting this trace into the preceding formula for $a_4(T)$, using $B=2(AA^{\mathsf T}-H_n)$, yields
\begin{align*}
 a_4(T)
 &=\frac{n^2(n-1)^2}{8}-\frac{n^2(n-1)}4
     -\operatorname{tr}(B^2)\\
 &=\frac{n^2(n-1)(n-3)}8
     -4\|AA^{\mathsf T}-H_n\|_F^2.
\end{align*}
Substituting the preceding formula for $\|AA^{\mathsf T}-H_n\|_F^2$ gives
\begin{align*}
 a_4(T)
 &=\frac{n^2(n-1)(n-3)}8-n(n-1)-16\delta(T)\\
 &=\frac{n(n-1)(n^2-3n-8)}8-16\delta(T)
  =\mathcal C_n-16\delta(T).
\end{align*}

The formulas for $a_4(T)$ and $\|AA^{\mathsf T}-H_n\|_F^2$ give both bounds in \cref{prop:53-four-equivalences}, since $\delta(T)\geq0$. They also show that conditions~(2) and~(4) each hold exactly when $\delta(T)=0$, which is equivalent to condition~(1). Since $B$ is symmetric with zero diagonal, condition~(3) requires precisely $B_{ij}\in\{-1,1\}$ for $i\ne j$, again equivalent to $\delta(T)=0$. Thus all four conditions are equivalent.
\end{proof}

\end{document}

%% file: main.bbl
\providecommand{\bysame}{\leavevmode\hbox to3em{\hrulefill}\thinspace}
\providecommand{\MR}{\relax\ifhmode\unskip\space\fi MR }
\providecommand{\MRhref}[2]{%
  \href{http://www.ams.org/mathscinet-getitem?mr=#1}{#2}
}
\providecommand{\href}[2]{#2}